%
%
%


\documentclass{mcom-l}

\usepackage{amssymb}
\usepackage{graphicx}
\usepackage{ulem}
\usepackage{multirow}
\usepackage{cite} 
\usepackage{bm}
\usepackage{algorithm}
\usepackage{algpseudocode}
\usepackage{algorithmicx}
\usepackage{setspace}
\usepackage{todonotes}
\usepackage[mathscr]{euscript}
\usepackage{subcaption}
\usepackage{contour}
\DeclareMathAlphabet{\mathpzc}{OT1}{pzc}{m}{it}
\usepackage{url}
\usepackage{hyperref}
\hypersetup{
    colorlinks,
    linkcolor={red!70!black},
    citecolor={green!60!black},
    urlcolor={blue!80!black}
}
\usepackage[nameinlink]{cleveref}

\normalem
\DeclareMathAlphabet{\mathbf}{OT1}{cmr}{bx}{it}

\newcommand{\spK}{\mathcal{K}}

\newcommand{\R}{\mathbb{R}}

\newcommand{\Rn}{\mathbb{R}^n}
\newcommand{\Rnn}{\mathbb{R}^{n \times n}}

\newcommand{\C}{\mathbb{C}}

\newcommand{\V}{\mathcal{V}}

\DeclareMathOperator{\vecop}{vec}

\DeclareMathOperator{\nnz}{nnz}

\DeclareMathOperator{\dist}{dist}

\newcommand{\blockvec}[1]{#1}                                 
\newcommand{\mat}[1]{\boldsymbol{#1}}                         
\newcommand{\fullbasis}[1]{\boldsymbol{\mathscr{#1}}}         
\newcommand{\fullhessenberg}[1]{\boldsymbol{\mathscr{#1}}}    
\newcommand{\fullcompressedsol}[1]{\boldsymbol{\mathscr{#1}}} 
\newcommand{\blockvecfull}[1]{\mathpzc{#1}}                   
\newcommand{\truncbasis}[1]{\boldsymbol{#1}}                  
\newcommand{\trunchessenberg}[1]{\mat{#1}}                    
\newcommand{\blockscalar}[1]{\bm{#1}}                         
\newcommand{\sPKSM}{\textsc{s-PSKM}}
\newcommand{\tPKSM}{\textsc{t-PSKM}}


\newcommand{\tol}{\texttt{tol}}

\newcommand{\schur}[1]{\mathcal{#1}}

\newcommand{\boldpzc}[1]{\contour[2]{black}{$\mathpzc{#1}$}}

\makeatletter 
\@mparswitchfalse%
\makeatother
\normalmarginpar





\newtheorem{theorem}{Theorem}[section]
\newtheorem{lemma}[theorem]{Lemma}

\theoremstyle{definition}

\newtheorem{example}[theorem]{Example}

\newtheorem{proposition}[theorem]{Proposition}
\newtheorem{corollary}[theorem]{Corollary}

\theoremstyle{remark}
\newtheorem{remark}[theorem]{Remark}

\numberwithin{equation}{section}

\let\oldexperiment\experiment
\renewcommand{\experiment}{\oldexperiment\normalfont}

\begin{document}

\title[Sketched Krylov methods for matrix Sylvester equations]{Sketched and truncated polynomial Krylov subspace methods: matrix Sylvester equations}

\author[D. Palitta]{Davide Palitta}
\address{Dipartimento di Matematica, (AM)$^2$, Alma Mater Studiorum - Università di Bologna, 40126 Bologna, Italy}
\email{davide.palitta@unibo.it}

\author[M. Schweitzer]{Marcel Schweitzer}
\address{School of Mathematics and Natural Sciences, Bergische Universit\"at Wuppertal, 42097 Wuppertal, Germany}
\email{marcel@uni-wuppertal.de}

\author[V. Simoncini]{Valeria Simoncini}
\address{Dipartimento di Matematica, (AM)$^2$, Alma Mater Studiorum - Università di Bologna, 40126 Bologna, and IMATI-CNR, Pavia,  Italy}
\email{valeria.simoncini@unibo.it}

\subjclass[2020]{Primary 65F45, 68W20; Secondary 65F25, 65F50}

\date{}

\dedicatory{}

\begin{abstract}
Thanks to its great potential in reducing both computational cost and memory requirements, combining sketching and Krylov subspace techniques has attracted a lot of attention in the recent literature on projection methods for linear systems, matrix function approximations, and eigenvalue problems. Applying this appealing strategy in the context of linear matrix equations turns out to be far more involved than a straightforward generalization. These difficulties include {\color{black}analyzing} well-posedness of the projected problem and deriving possible error estimates depending on the sketching properties.
Further computational complications include the lack of a natural residual norm estimate and of an explicit basis for the generated subspace.

In this paper we propose a new sketched-and-truncated polynomial Krylov subspace method for Sylvester equations that aims to address all these issues.
The potential of our novel approach, in terms of both computational time and storage demand, is illustrated with numerical experiments. Comparisons with a state-of-the-art projection scheme based on rational Krylov subspaces are also included.
\end{abstract}

\maketitle

\renewcommand{\thefootnote}{\arabic{footnote}}
\section{Introduction}
In the recent literature sketching has been successfully combined with Krylov subspace methods for the solution of linear systems of equations and eigenvalue problems~\cite{balabanov2021randomized,balabanov2022randomized,NakatsukasaTropp2021,TimsitGrigoriBalabanov2023} as well as the numerical approximation of the action of a matrix function on a vector~\cite{PalittaSchweitzerSimoncini2023,CortinovisKressnerNakatsukasa2022,GuettelSchweitzer2022}. In this paper we address another central problem in numerical linear algebra, namely the solution of large-scale Sylvester matrix equations of the form 
\begin{equation}\label{eq:sylvester_equation}
    \mat{A} \mat{X} + \mat{X}\mat{B} = \blockvec{C}_1\blockvec{C}_2^\top,
\end{equation}
where $\mat{A} \in \R^{n_1 \times n_1}$, $\mat{B} \in \R^{n_2 \times n_2}$, and the right-hand side is supposed to be of low rank, i.e., $\blockvec{C}_1 \in \R^{n_1 \times r}$, $\blockvec{C}_2 \in \R^{n_2 \times r}$ with $r \ll \max\{n_1,n_2\}$. The low rank of the right-hand side, along with suitable additional conditions on the coefficient matrices, like, e.g., a large enough relative distance between the spectra of $\mat{A}$ and $-\mat{B}$, implies that the desired solution $\mat{X}$ admits accurate low-rank approximations; see, e.g.,~\cite{BakerEmbreeSabino2015,Grasedyck2004,Penzl2000}.

For the sake of simplicity, in the following we focus on the case $n\equiv n_1=n_2$, namely the solution $\mat{X}$ is an $n\times n$ matrix, but everything we present here can be straightforwardly generalized to the case $n_1\neq n_2$.

The Sylvester equation~\eqref{eq:sylvester_equation} can be encountered in a variety of applications. For instance, several model order reduction techniques give rise to Sylvester equations that feature very large and sparse coefficients $\mat{A}$ and $\mat{B}$~\cite{Antoulas2005}. Similarly, the discretization of certain elliptic and parabolic PDEs yields Sylvester equations of the form~\eqref{eq:sylvester_equation}; see, e.g., \cite{Palitta2016,Palitta2021}.
Another source of applications are linearizations of nonlinear problems, such as the algebraic Riccati equation, where, in general, the solution of~\eqref{eq:sylvester_equation} is needed in iterative solvers~\cite{BiniIannazzoMeini2011}. We refer to the {\color{black}surveys~\cite{Simoncini2016,BennerSaak2013}} for further applications and details.

Our goal is to design an efficient projection procedure for the solution of~\eqref{eq:sylvester_equation} based on the polynomial Krylov subspaces $\mathcal{K}_d(\mat{A},\blockvec{C}_1)$ and $\mathcal{K}_d(\mat{B}^{\top},\blockvec{C}_2)$, where the Krylov subspace is defined as
\begin{equation}\label{eq:def_Krylov}
    \mathcal{K}_d(\mat{A},\blockvec{C}_1)=\text{Range}\{[\blockvec{C}_1,\mat{A} \blockvec{C}_1,\ldots,\mat{A}^{d-1}\blockvec{C}_1]\},
\end{equation}
and analogously for $\mathcal{K}_d(\mat{B}^{\top},\blockvec{C}_2)$.

Early contributions to matrix equation solvers suggested the use of~\eqref{eq:def_Krylov} as approximation space as well; see the pioneering work in \cite{Saad1990a} for the Lyapunov equation, and later contributions in, e.g., 
\cite{JaimoukhaKasenally1994} for the Sylvester equation.
However, it is well-known that the rate of convergence of plain polynomial Krylov subspace methods for Sylvester equations is rather poor in general, and a significant number of iterations is often needed to converge, thus worsening the performance of the overall solution procedure. Attempts to lower the computational cost per iteration and the memory requirements in case of symmetric $\mat{A}$ and $\mat{B}$ can be found in~\cite{PalittaSimoncini2018}. With the same goals, a restarted procedure is presented in~\cite{KressnerLundMasseiPalitta2021}. These contributions come with their limitations and restrictions, so that in practice, projection schemes based on the extended Krylov subspace 
and the rational Krylov subspace are usually preferred, in spite of their more expensive basis construction; see \cite{Simoncini2016}.

{\color{black} Projection-based methods are not the only procedures available in the literature for solving large-scale Sylvester equations. A non-complete list of different solvers sees low-rank Alternating Direction Implicit (ADI) methods~\cite{ADI_Sylv2009}, sign function methods~\cite{Baur2008}, and hierarchical algorithms~\cite{GrasHack2007}.}
{\color{black}We refer to \cite{BennerSaak2013,Simoncini2016} for more detailed accounts of different approaches available for small and large scale problems.}

In this paper we significantly improve the performance of polynomial Krylov subspace methods for Sylvester equations by combining the projection scheme with randomized sketching~\cite{MartinssonTropp2020,woodruff2014sketching}. In particular, we tackle two of the main computational issues due to the large number of iterations needed by polynomial Krylov subspace methods: (i) the high cost of the orthogonalization step and (ii) the large storage demand due to the full allocation of the basis of~\eqref{eq:def_Krylov}. 

As has been shown for linear systems and matrix functions~\cite{CortinovisKressnerNakatsukasa2022,GuettelSchweitzer2022,PalittaSchweitzerSimoncini2023,NakatsukasaTropp2021}, 
one way to unlock the potential of sketching is to combine it with a truncated Krylov scheme, i.e., a Krylov subspace method where the constructed basis is only locally orthogonal. 
The latter approach dramatically reduces the cost of the orthogonalization step. On the other hand, {\color{black} working with a non-orthogonal basis} often leads to a convergence delay; such phenomenon is well-documented for linear systems~\cite{Simoncini2005} and matrix functions~\cite{EiermannErnst2006}. Sketching comes into play to overcome this drawback. In particular, we show that the use of sketching is able to restore the rate of convergence of the full Krylov subspace method without significantly increasing the overall cost of the truncated technique.
We would like to mention that, to the best of our knowledge, 
plain (non-sketched) truncated polynomial Krylov subspace methods for the solution of matrix equations have also never been proposed in the literature. This can thus be seen as a further contribution of this paper.

Thanks to the truncated orthogonalization step, the whole basis is not required during the basis construction, {\color{black} and it is thus not stored, with significant memory savings}. On the other hand, the full basis is needed to retrieve the final solution.
We avoid {\color{black} allocating} the whole basis by employing a so-called two-pass strategy.
This is a common procedure 
which in our case consists in first running the sketched and truncated method without storing the basis. Then, once the prescribed level of accuracy is met, the solution is constructed by computing its low-rank factors during a second, cheaper truncated Arnoldi step; see, e.g., \cite{Kressner2008}. 

The truncated and sketched Krylov machinery has not been used so far in the solution of matrix equations, and as we will illustrate, its formulation and implementation require special attention to obtain an efficient and reliable method. 

{The remainder of the paper is organized as follows. In \Cref{sec:background_material}, we review the basics of polynomial Krylov methods and randomized sketching, as well as the combination of both. \Cref{sec:sketch} introduces the general framework for sketched and truncated Krylov approximations for Sylvester equations and discusses how they relate to the standard Arnoldi approximation. In \Cref{sec:theory}, we investigate the influence of sketching on the field of values and perform a convergence analysis for Lyapunov equations. In \Cref{sec:algorithms}, we discuss implementation aspects and present an algorithm for the sketched Arnoldi method, before presenting numerical experiments in \Cref{sec:numerical_examples}. Concluding remarks are given in \Cref{sec:conclusions}. \Cref{appendix:algorithm} contains an algorithmic description of a truncated Arnoldi method (without sketching), and \Cref{appendix:embedding} collects auxiliary results on tensorized subspace embeddings.}

We assume that exact arithmetic is employed in all the derivations presented in this paper.

Throughout the manuscript, we use the following notation. Vectors are denoted by regular lowercase letters and block vectors (i.e., thin matrices with $r$ columns) are denoted by regular uppercase letters. ``Block scalars'' (i.e., matrices of size $ r\ \times r$ that would be scalars in the rank-1 case) are denoted by bold-face lowercase letters. All other matrices are denoted by bold-face uppercase letters. At some places, calligraphic font is used to help distinguish quantities connected to specific algorithms, which will be explicitly made clear where it is used.  By $\blockvec{E}_i\in{\mathbb R}^{dr\times r}$, we denote the block vector which is all zero except for an identity matrix in its $i$th $r \times r$ block. Given an $n\times n$ real matrix $\mat{M}$, the quantity
$\alpha(\mat{M}) := \lambda_{\max}((\mat{M}+\mat{M}^{\top})/2)$ denotes the rightmost point of the field of values of $\mat{M}$, defined as $W(\mat{M})= \{ \frac{x^* \mat{M}x}{x^*x}, \, {\rm with} \, 0\ne x\in {\mathbb C}^n\}$; here $\lambda_{\max}(\cdot)$ denotes the rightmost eigenvalue of the argument symmetric matrix. We say that $\mat{M}$ is negative definite if $\alpha(\mat{M})<0$.

\section{Background material}\label{sec:background_material}
In this section we review a few known tools that will be  crucial ingredients in what follows.
\subsection{The block Arnoldi relation}
Krylov subspace methods for the solution of Sylvester equations~\eqref{eq:sylvester_equation} build upon the Arnoldi process~\cite{Arnoldi1951}, or a block version of the Arnoldi process (\!\!\cite[Section~6.12]{Saad2003}) if the rank $r$ of the right-hand side $\blockvec{C}_1\blockvec{C}_2^{\top}$ is larger than one. We present all methods in the more general block setting in the following, without writing out the obvious simplifications that are possible when $r=1$. 

Given the coefficient matrices $\mat{A}, \mat{B} \in \Rnn$, the blocks of vectors $\blockvec{C}_1, \blockvec{C}_2 \in \R^{n \times r}$, and number of Krylov iterations\footnote{Note that all our results easily generalize to the case of two different dimensions $d_1$, $d_2$ for the two Krylov spaces, but we restrict to $d_1 = d_2 \equiv d$ to simplify notation.} $d$, the block Arnoldi method computes nested orthonormal bases $\fullbasis{U}_d \in \R^{n \times dr}$ of $\spK_d(\mat{A},\blockvec{C}_1)$ and $\fullbasis{V}_d \in \R^{n \times dr}$ of $\spK_d(\mat{B}^{\top},\blockvec{C}_2)$ via modified block Gram--Schmidt orthonormalization. This yields the \emph{block Arnoldi relations}
\begin{align}\label{eq:arnoldi_relations}
\begin{split}
\mat{A}\fullbasis{U}_d &= \fullbasis{U}_{d+1}\underline{\fullhessenberg{H}}_d = \fullbasis{U}_d \fullhessenberg{H}_d + \blockvecfull{U}_{d+1}\boldpzc{h}_{d+1,d}\blockvec{E}_d^{\top},\\
\mat{B}^{\top}\fullbasis{V}_d &= \fullbasis{V}_{d+1}\underline{\fullhessenberg{G}}_d = \fullbasis{V}_d \fullhessenberg{G}_d + \blockvecfull{V}_{d+1}\boldpzc{g}_{d+1,d}\blockvec{E}_d^{\top},
\end{split}
\end{align}
where 
\[
\underline{\fullhessenberg{H}}_d = 
\begin{bmatrix}
\boldpzc{h}_{1,1} & \boldpzc{h}_{1,2} & \cdots & \boldpzc{h}_{1,d} \\
\boldpzc{h}_{2,1} & \boldpzc{h}_{2,2} & \cdots & \boldpzc{h}_{2,d} \\
                  & \boldpzc{h}_{3,2} & \ddots &  \vdots \\
                  & \ & \ddots &  \boldpzc{h}_{d,d} \\
                  &  &  &  \boldpzc{h}_{d+1,d} \\
\end{bmatrix}, \qquad 
\underline{\fullhessenberg{G}}_d = 
\begin{bmatrix}
\boldpzc{g}_{1,1} & \boldpzc{g}_{1,2} & \cdots & \boldpzc{g}_{1,d} \\
\boldpzc{g}_{2,1} & \boldpzc{g}_{2,2} & \cdots & \boldpzc{g}_{2,d} \\
                  & \boldpzc{g}_{3,2} & \ddots &  \vdots \\
                  & \ & \ddots &  \boldpzc{g}_{d,d} \\
                  &  &  &  \boldpzc{g}_{d+1,d} \\
\end{bmatrix}
\]
are block upper Hessenberg matrices of size $(d+1)r \times {\color{black}d}r$, with blocks $\boldpzc{g}_{i,j}, \boldpzc{h}_{i,j} \in \R^{r \times r}$ {\color{black} and $\fullhessenberg{G}_d,\fullhessenberg{H}_d$ denoting their upper $dr \times dr$ submatrix. Further,} $\fullbasis{U}_{d+1}=[\blockvecfull{U}_1, \dots, \blockvecfull{U}_{d+1}], \fullbasis{V}_{d+1}=[\blockvecfull{V}_1, \dots, \blockvecfull{V}_{d+1}] \in \R^{n \times (d+1)r}$ contain the basis block vectors $\blockvecfull{U}_{i}, \blockvecfull{V}_{i} \in \R^{n \times r}$, respectively.

Performing $d$ steps of the two block Arnoldi schemes requires $2dr$ matrix vector products (half of them with $\mat{A}$ and half of them with $\mat{B}^{\top}$) at a cost of $\mathcal{O}(dr \nnz(\mat{A}) + dr \nnz(\mat{B}))$ floating point operations (flops), assuming that $\mat{A}$ and $\mat{B}$ are sparse with $\nnz(\mat{A})$ and $\nnz(\mat{B})$ nonzeros, respectively. In addition, the modified block Gram--Schmidt orthogonalization needs to be carried out, which in total, across all $d$ iterations, induces a cost of $\mathcal{O}((rd)^2 {\color{black}n})$ flops. This quadratic growth in computational complexity means that with growing iteration number, the orthogonalization step tends to dominate the overall cost of the method. This becomes even more severe on modern high performance architectures where each inner product introduces a global synchronization point requiring communication.

\subsection{Oblivious subspace embeddings}
The basis of sketched Krylov methods is the use of so-called \emph{oblivious subspace embeddings}~\cite{sarlos2006improved,DrineasMahoneyMuthukrishnan2006,woodruff2014sketching}. These allow to embed a (low-dimensional) subspace $\V$ of $\Rn$ into $\mathbb{R}^s$, $s \ll n$, such that norms (or inner products) are distorted in a controlled manner. 

{\color{black} To be specific, for a given $\varepsilon\in [0,1)$, a matrix $\mat{S} \in \R^{s \times n}$ is called an \emph{$\varepsilon$-subspace embedding for $\V$} (or for brevity, simply a \emph{``sketching matrix''}) if it fulfills},
\begin{equation}\label{eq:sketch}
(1-\varepsilon) \| v \|^2 \leq \| \mat{S} v\|^2 \leq (1+\varepsilon) \|v\|^2,
\end{equation}
{\color{black} for all vectors~$v \in \V$}, where $\|\cdot\|$ denotes the Euclidean vector norm. {\color{black} By the parallelogram inequality, this is equivalent to requiring}
\begin{equation}{\label{eq:sketch_innerproduct}}
\langle u, v \rangle - \varepsilon \| u\| \|v\|
            \leq \langle \mat{S} u, \mat{S} v \rangle 
            \leq \langle u, v \rangle + \varepsilon \| u\| \|v\|,
\end{equation}
for all $u,v \in \V$.

{\color{black} In the context of sketched Krylov methods, so-called \emph{oblivious} subspace embeddings are very important. These are embeddings which} can be constructed without explicit knowledge of the subspace $\V$ that shall be embedded. {\color{black} To be precise, $\mat{S}$ is an oblivious $\varepsilon$-subspace embedding for subspaces of dimension $d$, if---with high probability---condition~\eqref{eq:sketch} holds for \emph{any} subspace $\V$ with $\dim(\V) = d$. More precisely, the inputs for constructing $\mat{S}$ are the embedding quality $\varepsilon$, the sketching dimension $s$ and an accepted failure probability $\delta$. Based on these inputs, $\mat{S}$ is then constructed using probabilistic methods. We refer to~\cite[Section~8]{MartinssonTropp2020} and~\cite{woodruff2014sketching} for an extensive treatment of oblivious subspace embeddings. As a specific example, assume that $\mat{S}$ is constructed as a subsampled trigonometric transform,
\[
\mat{S} = \sqrt{\frac{s}{n}}\mat{D}\mat{N}\mat{E},
\]
where $\mat{E} \in \Rnn$ is a diagonal matrix with entries $\pm 1$ chosen randomly (with equal probability), $\mat{D} \in \R^{s \times n}$ contains $s$ randomly selected rows of the identity matrix, and $\mat{N} \in \Rnn$ is a trigonometric transform (e.g., discrete Fourier transform, discrete cosine transform, Walsh--Hadamard transform). Then, one can show that $\mat{S}$ is an oblivious $\varepsilon$-subspace embedding with failure probability $\delta$ if the sketching dimension satisfies $s = \mathcal{O}(\varepsilon^{-2}(d + \log\frac{n}{\delta})\log\frac{d}{\delta})$. When implemented properly, such $\mat{S}$ can be applied at a cost of $\mathcal{O}(n\log s)$ flops.\footnote{When selecting $\mat{S}$ as a matrix with Gaussian random entries, one actually obtains the stronger guarantee that $s = \mathcal{O}(\varepsilon^{-2}(d + \log\frac{1}{\delta}))$ is sufficient, but these are less attractive computationally because they are more costly to apply.} Numerical evidence suggests that, despite certain academic examples for which it fails, selecting $s = \mathcal{O}(\varepsilon^{-2}\frac{d}{\delta})$ works well in practice; see, e.g.,~\cite{tropp2011improved} and \cite[Section~9]{MartinssonTropp2020}.

All computationally feasible classes of matrices from which one can draw $\mat{S}$ have in common that the necessary sketching dimension $s$ depends quadratically on the embedding quality $\varepsilon$. As a consequence} one needs to work with rather crude accuracies, {\color{black}such as} $\varepsilon = 1/\sqrt{2}$ {\color{black}or $\varepsilon = 1/2$, i.e., relatively large distortions in~\eqref{eq:sketch}.}

{\color{black} Oblivious subspace embeddings are key in our context, as the spaces we want to embed will be certain Krylov subspaces (see \Cref{subsec:sketched_arnoldi} and \Cref{sec:sketch} for details), that we do not know a priori,} at the start of the method.

{\color{black} We mention in passing that an alternative viewpoint on subspace embeddings is} that a subspace embedding $\mat{S}$ induces a semidefinite inner product
\begin{equation*}\label{eq:semidef_inner_product}
    \langle u, v\rangle_{\mat{S}} := \langle \mat{S} u, \mat{S} v\rangle,
\end{equation*}
which, when restricted to $\mathcal{V}$, is an actual (positive definite) inner product; see, e.g.,~\cite[Section 3.1]{BalabanovNouy2019}.

{\color{black} This viewpoint is particularly relevant for establishing a connection between our methodology and other randomized Krylov techniques that have been recently proposed in the literature; see the discussion in~\Cref{subsec:sketched_arnoldi}.}

\subsection{The sketched Arnoldi relation}\label{subsec:sketched_arnoldi}
The combination of Krylov subspace methods with randomized oblivious subspace embeddings in order to increase computational efficiency in the orthogonalization step (and potentially reduce storage requirements and communication) is a topic that has recently gained a lot of traction and was investigated in the context of linear systems and eigenvalue computations~\cite{balabanov2021randomized,balabanov2022randomized,NakatsukasaTropp2021} as well as in the approximation of matrix functions~\cite{burke2023krylov,CortinovisKressnerNakatsukasa2022,GuettelSchweitzer2022,PalittaSchweitzerSimoncini2023}. The approach taken in~\cite{GuettelSchweitzer2022,NakatsukasaTropp2021,PalittaSchweitzerSimoncini2023} advocates the use of partially orthogonalized Krylov bases together with a subspace embedding $\mat{S}$ that allows to ``approximately orthogonalize'' the Krylov basis at a much lower cost.

In order to analyze the resulting methods and allow to compare them to their standard, non-sketched counterparts, a \emph{sketched Arnoldi relation} is derived in~\cite{PalittaSchweitzerSimoncini2023}, which we recall in the following. Assume that a truncated (block) Arnoldi procedure is employed for constructing bases of $\spK_d(\mat{A},\blockvec{C}_1)$ and $\spK_d(\mat{B}^{\top},\blockvec{C}_2)$, where orthogonalization {\color{black}only} against the previous {\color{black}$k<d$} blocks is performed, {\color{black}rather than against all $d$ previous blocks, followed by the orthogonalization of the columns of the new block}{\color{black}; see also~\cite[Algorithm~6.6]{Saad2003} for a discussion of the non-block case}. This results in the truncated Arnoldi relations
\begin{equation}\label{eqn:arnoldi_tr}
\mat{A} \truncbasis{U}_d = \truncbasis{U}_{d+1} \underline{\trunchessenberg{H}}_d,\qquad \mat{B}^{\top} \truncbasis{V}_d = \truncbasis{V}_{d+1} \underline{\trunchessenberg{G}}_d,
\end{equation}
where $\underline{\trunchessenberg{H}}_d$ and $\underline{\trunchessenberg{G}}_d$ are upper Hessenberg and banded (with upper bandwidth $kr$), and $\truncbasis{U}_d$, $\truncbasis{V}_d$ do not have all orthogonal columns. We denote $\underline{\trunchessenberg{H}}_d= [\trunchessenberg{H}_d; \blockvec{H}^{\top}]$, $\underline{\trunchessenberg{G}}_d= [\trunchessenberg{G}_d; \blockvec{G}^{\top}]$, where only the last $r \times r$ blocks of $\blockvec{H}^{\top} = \blockscalar{h}_{d+1,d}\blockvec{E}_d^{\top}$ and $\blockvec{G}^{\top} = \blockscalar{g}_{d+1,d}\blockvec{E}_d^{\top}$ are nonzero.\footnote{Following the convention from~\cite{PalittaSchweitzerSimoncini2023}, we use roman font ($\trunchessenberg{H}_d$, $\truncbasis{U}_d$) for quantities related to a non-orthogonal Arnoldi relation and calligraphic font ($\fullhessenberg{H}_d, \fullbasis{U}_d$) for quantities related to a fully orthogonal Arnoldi relation.}

{\color{black} It is known---e.g., from the linear system case~\cite{Simoncini2005}---that basing a projection method on the quantities from~\eqref{eqn:arnoldi_tr} might delay convergence (and the same effect is also observed for matrix equations in our experiments in \Cref{sec:numerical_examples}). One possible remedy for this is to combine the truncated block Arnoldi method with oblivious subspace embeddings in order to cheaply (approximately) orthogonalize the non-orthogonal bases $\mat{U}_{d+1},\mat{V}_{d+1}$, as it was recently proposed for linear systems and matrix function computations in~\cite{NakatsukasaTropp2021,GuettelSchweitzer2022,PalittaSchweitzerSimoncini2023}.

The basis for analyzing this approach is given by the following ``sketched Arnoldi relation'' from~\cite{PalittaSchweitzerSimoncini2023}. We assume that} $\mat{S}_{\truncbasis{U}}$, $\mat{S}_{\truncbasis{V}}$ {\color{black}are} sketching matrices for $\spK_{d+1}(\mat{A},\blockvec{C}_1)$ and $\spK_{d+1}(\mat{B}^{\top},\blockvec{C}_2)$, respectively.

\begin{proposition}\label{prop:Sarnoldi}[Proposition~3.1 in~\cite{PalittaSchweitzerSimoncini2023}, adapted to the block setting] \label{prop:sk_arn_rel}
Let $\mat{S}_{\truncbasis{U}}\truncbasis{U}_{d+1}=\mat{Q}_{\mat{U},d+1} \mat{T}_{\mat{U},d+1}$ be a reduced QR decomposition with 
\[
\mat{Q}_{\mat{U},d+1}=[\mat{Q}_{\mat{U},d},\blockvec{Q}_{\mat{U},d+1}] \text{ and } \mat{T}_{\mat{U},d+1}=
\begin{bmatrix}
\mat{T}_{\mat{U},d}& \blockvec{T}_{H}\\
0^{\top} &\blockscalar{\tau}_{d+1}
\end{bmatrix}.
\]
Then, for the sketched method, the following Arnoldi-like formula holds:
\begin{equation}\label{eq:sketched_arnoldi1}
\mat{S}_{\truncbasis{U}} \mat{A} \truncbasis{U}_d = \mat{S}_{\truncbasis{U}} \truncbasis{U}_{d} (\trunchessenberg{H}_d + \blockvec{R}_{H} \blockvec{E}_d^{\top}) + \blockvec{Q}_{\mat{U},d+1} \blockscalar{\tau}_{d+1} \blockscalar{h}_{d+1,d} \blockvec{E}_d^{\top},
\end{equation}
with $\blockvec{R}_{H}=\mat{T}_{\mat{U},d}^{-1}\blockvec{T}_{H}\blockscalar{h}_{d+1,d}$ and $\blockvec{Q}_{\mat{U},d+1}\perp \mat{S}_{\truncbasis{U}} \truncbasis{U}_d$.
Similarly, if 
$$
\mat{S}_{\truncbasis{V}}\truncbasis{V}_{d+1}=\mat{Q}_{V,d+1} \mat{T}_{V,d+1}, \qquad \mat{T}_{V,d+1}=\begin{bmatrix}
\mat{T}_{V,d} & \blockvec{T}_{G}\\ 
0^{\top} & \blockscalar{\theta}_{d+1}\\
\end{bmatrix},
$$
then
\begin{equation}\label{eq:sketched_arnoldi2}
\mat{S}_{\truncbasis{V}} \mat{B}^{\top}\truncbasis{V}_d = \mat{S}_{\truncbasis{V}} \truncbasis{V}_{d} (\trunchessenberg{G}_d + \blockvec{R}_{G} \blockvec{E}_d^{\top}) +  \blockvec{Q}_{V,d+1}\blockscalar{\theta}_{d+1} \blockscalar{g}_{d+1,d} \blockvec{E}_d^{\top},
\end{equation}
where
$\blockvec{R}_{G}:= \mat{T}_{V,d}^{-1}\blockvec{T}_{G}\blockscalar{g}_{d+1,d}$.
\end{proposition}

The reduced QR decompositions of $\mat{S}_{\truncbasis{U}}\truncbasis{U}_{d+1}$ and
 $\mat{S}_{\truncbasis{V}}\truncbasis{V}_{d+1}$ {\color{black}occurring in \Cref{prop:sk_arn_rel}} can {\color{black}also be used} as a means of orthogonalizing the bases $\truncbasis{U}_d$ and $\truncbasis{V}_d$ with respect to
the inner product $\langle\cdot,\cdot\rangle_{\mat{S}}$, via the transformations $\truncbasis{U}_d \rightarrow \truncbasis{U}_d\mat{T}_{\mat{U},d}^{-1}$ and $\truncbasis{V}_d \rightarrow \truncbasis{V}_d\mat{T}_{V,d}^{-1}$, respectively. 
Thanks to the $\varepsilon$-subspace embedding 
property~\eqref{eq:sketch_innerproduct},  the bases 
\begin{equation}\label{eqn:bases}
\widehat{\truncbasis{U}}_d := \truncbasis{U}_d\mat{T}_{\mat{U},d}^{-1}, \quad 
\widehat{\truncbasis{V}}_d := \truncbasis{V}_d\mat{T}_{V,d}^{-1},
\end{equation}
are {\color{black}very well-behaved}, {\color{black} conditioned to the considered probability}. Indeed, {\color{black} with high probability} it holds
(\!\!\cite[Corollary 2.2]{balabanov2022randomized})
\begin{equation*}\label{eq:whitenend_basis_conditioning}
\kappa_2(\widehat{\truncbasis{U}}_d) \leq \sqrt{\frac{1+\varepsilon}{1-\varepsilon}}, 
\end{equation*}
{\color{black}where $\kappa_2$ denotes the two-norm condition number}, and similarly for $\widehat{\truncbasis{V}}_d$.  
{\color{black}Rewriting the relations~\eqref{eq:sketched_arnoldi1}--\eqref{eq:sketched_arnoldi2} in terms of the bases $\widehat{\truncbasis{U}}_d, \widehat{\truncbasis{V}}_d$ yields the following} {\it whitened-sketched} {\color{black}truncated block} Arnoldi {\color{black}(WS-Arnoldi)} relations (\!\!\cite[Section~3]{PalittaSchweitzerSimoncini2023})
\begin{align}\label{eqn:WSarnoldi}
\begin{split}
\mat{S}_{\truncbasis{U}} \mat{A}\widehat{\truncbasis{U}}_d=
\mat{S}_{\truncbasis{U}}\widehat{\truncbasis{U}}_d (\widehat{\trunchessenberg{H}}_d + \widehat H \blockvec{E}_d^{\top}) +
\blockvec{Q}_{\mat{U},d+1} \blockscalar{h}_{d+1,d} \blockvec{E}_d^{\top}, \\ 
\mat{S}_{\truncbasis{V}} \mat{B}^{\top}\widehat{\truncbasis{V}}_d = \mat{S}_{\truncbasis{V}}\widehat{\truncbasis{V}}_d (\widehat{\trunchessenberg{G}}_d + \widehat G \blockvec{E}_d^{\top}) +   \blockvec{Q}_{V,d+1}\blockscalar{g}_{d+1,d} \blockvec{E}_d^{\top},
\end{split}
\end{align}
where 
\begin{align}
\begin{split}\label{eqn:Hhat_Ghat}
\widehat{\trunchessenberg{H}}_d=\mat{T}_{\mat{U},d} \trunchessenberg{H}_d \mat{T}_{\mat{U},d}^{-1}, &\qquad \widehat{\blockvec{H}} = \blockvec{T}_{H}\blockscalar{h}_{d+1,d}\blockscalar{\tau}_{d}^{-1}, \\
\widehat{\trunchessenberg{G}}_d=\mat{T}_{V,d}\trunchessenberg{G}_d\mat{T}_{V,d}^{-1}, &\qquad \widehat G= \blockvec{T}_{G}\blockscalar{g}_{d+1,d}\blockscalar{\theta}^{-1}_d.
\end{split}
\end{align}
Finally, from~\eqref{eqn:WSarnoldi} we can derive the relations
\begin{align}\label{eqn:WSarnoldi2}
\begin{split}
\widehat{\truncbasis{U}}_d^{\top}\mat{S}_{\truncbasis{U}}^{\top}\mat{S}_{\truncbasis{U}} \mat{A}\widehat{\truncbasis{U}}_d=
\widehat{\trunchessenberg{H}}_d + \widehat{\blockvec{H}} \blockvec{E}_d^{\top}, \qquad
\widehat{\truncbasis{V}}_d^{\top}\mat{S}_{\truncbasis{V}}^{\top}\mat{S}_{\truncbasis{V}} \mat{B}^{\top}\widehat{\truncbasis{V}}_d = \widehat{\trunchessenberg{G}}_d + \widehat{\blockvec{G}} \blockvec{E}_d^{\top},
\end{split}
\end{align}
which emphasize the role of the right-hand side matrices as projection and reduction
of the matrices $\mat{A}$ and $\mat{B}^{\top}$ onto the generated spaces, in the $\mat{S}_{\truncbasis{U}}^{\top}\mat{S}_{\truncbasis{U}}$
and $\mat{S}_{\truncbasis{V}}^{\top}\mat{S}_{\truncbasis{V}}$ inner product, respectively. 
{\color{black}We stress that the full
matrices $\widehat{\truncbasis{U}}_d, \widehat{\truncbasis{V}}_d$ are never explicitly constructed in practice, whereas the small 
 matrices $\widehat{\trunchessenberg{H}}_d, \widehat{\trunchessenberg{G}}_d$ are used to derive the sought after problem approximation.}

{\color{black} We end this section by commenting on the connection between the methodology outlined above and other randomized Krylov methods that were recently proposed, in particular those based on the \emph{randomized Gram--Schmidt (RGS) process}~\cite{balabanov2021randomized,balabanov2022randomized,CortinovisKressnerNakatsukasa2022,TimsitGrigoriBalabanov2023}. In these methods, one uses a Gram-Schmidt procedure---with respect to the sketched inner product~\eqref{eq:semidef_inner_product} instead of the usual Euclidean inner product---for constructing the Krylov basis. In exact arithmetic, this approach yields the same bases as~\eqref{eqn:bases} if the QR decompositions are computed without interchanging columns. In particular, the WS-Arnoldi relations~\eqref{eqn:WSarnoldi} and~\eqref{eqn:WSarnoldi2} are equivalent to relations obtained in~\cite[Section~3.2]{TimsitGrigoriBalabanov2023}, but they are phrased differently and reveal different structural properties.

Thus, as was already noted in~\cite{TimsitGrigoriBalabanov2023}, methods based on the RGS process and those based on truncated Arnoldi and sketching are mathematically equivalent. There are several notable differences, though: Performing the RGS process necessarily requires keeping the whole Krylov basis in memory, and the asymptotic cost of the orthogonalization still scales as $\mathcal{O}((rd)^2n)$. Indeed, while inner products are computed with vectors of length $s$, all other vector operations have the same cost as in standard Gram--Schmidt. In contrast, when using truncated Arnoldi with sketching, only the last $k$ basis blocks need to be kept in memory, as the coefficients for the orthogonalization in the semidefinite inner product~\eqref{eq:semidef_inner_product} are computed from the sketched bases $\mat{S}_{\truncbasis{U}}\truncbasis{U}_{d+1}, \mat{S}_{\truncbasis{V}}\truncbasis{V}_{d+1}$. As we will explain in \Cref{sec:sketch}, the transformations~\eqref{eqn:bases} need not be performed explicitly, so that using this approach allows to use a two-pass strategy for reducing memory consumption; see \Cref{sec:algorithms} below. In summary, our approach reduces the asymptotic cost and memory consumption of the basis construction in comparison to RGS. On the other hand, methods based on RGS can in general be expected to be more stable, as the truncated block Arnoldi process and the retrospective orthogonalization via $\mat{T}_{\mat{U},d}^{-1}$ and $\mat{T}_{V,d}^{-1}$ can potentially be sources of instability. We refer to~\cite{CortinovisKressnerNakatsukasa2022}, where it is illustrated (for the problem of computing $f(A)b$, the action of a matrix function on a vector) that RGS-based methods might be successful {\color{black} in certain examples where} those based on truncated orthogonalization are not. The precise effects causing this difference in performance are not yet fully understood, though. In particular, they are not merely related to ill-conditioning of the bases from truncated Arnoldi: As examples in~\cite{GuettelSchweitzer2022,PalittaSchweitzerSimoncini2023} as well as in \Cref{sec:numerical_examples} below show, the methods often continue to work well even in presence of a numerically singular basis.}

\section{Sketched and truncated Krylov methods for matrix equations}\label{sec:sketch}
 We now turn more specifically to the solution of the 
Sylvester equation~\eqref{eq:sylvester_equation} by means of projection onto
the generated spaces;  see, e.g.,~\cite{Simoncini2016}. We seek an approximation in the form $\mat{X}_d=\truncbasis{U}_d\mat{Y}_d\truncbasis{V}_d^{\top}$, with $\truncbasis{U}_d, \truncbasis{V}_d$ generated
above 
and $\mat{Y}_d\in\mathbb{R}^{dr\times dr}$ computed by solving some \emph{surrogate} problem. 
Using the truncated Arnoldi relations in (\ref{eqn:arnoldi_tr}) and $\mat{X}_d=\truncbasis{U}_d\mat{Y}_d\truncbasis{V}_d^{\top}$, 
the residual matrix $\mat{R}_d=\mat{A} \mat{X}_d+\mat{X}_d\mat{B}-\blockvec{C}_1\blockvec{C}_2^{\top}$ can be written as
 \begin{equation}\label{eq:residualform_po}
 \mat{R}_d=[\truncbasis{U}_d,\blockvec{U}_{d+1}]\begin{bmatrix}
     \trunchessenberg{H}_d\mat{Y}_d+\mat{Y}_d\trunchessenberg{G}_d^{\top}-\blockvec{E}_1{\blockscalar\beta}_1{\blockscalar\beta}_2^{\top}\blockvec{E}_1^{\top} & \mat{Y}_dE_{d}\blockscalar{g}_{d+1,d}^\top\\
     \blockscalar{h}_{d+1,d}\blockvec{E}_d^{\top}\mat{Y}_d & 0 \\
 \end{bmatrix}[\truncbasis{V}_d,\blockvec{V}_{d+1}]^{\top}, 
 \end{equation}
 where $\blockvec{C}_1=\truncbasis{U}_d\blockvec{E}_1{\blockscalar\beta}_1$, $\blockvec{C}_2=\truncbasis{V}_d\blockvec{E}_1{\blockscalar\beta}_2$, 
and 
${\blockscalar\beta}_1,{\blockscalar\beta}_2\in\mathbb{R}^{r\times r}$. 
{\color{black}The matrix $\mat{Y}_d$ can be obtained by, e.g., imposing some optimality condition on the residual matrix, such as orthogonality with respect to the approximation bases, i.e., the so-called Galerkin condition, $\truncbasis{U}_d^{\top} \mat{R}_d \truncbasis{V}_d=0$.}
If $\truncbasis{U}_{d+1}=[\truncbasis{U}_d,\blockvec{U}_{d+1}]$ and $\truncbasis{V}_{d+1}=[\truncbasis{V}_d,\blockvec{V}_{d+1}]$ had orthonormal columns, {\color{black} this condition would correspond to zeroing out the top left term in the inner block matrix in (\ref{eq:residualform_po}),
thus solving 
$\trunchessenberg{H}_d\mat{Y}_d+\mat{Y}_d\trunchessenberg{G}_d^{\top}=\blockvec{E}_1{\blockscalar\beta}_1{\blockscalar\beta}_2^{\top}\blockvec{E}_1^{\top}$ for $\mat{Y}_d$.
Hence, it would also hold that
$\|\mat{R}_d\|_F^2=\|\blockscalar{h}_{d+1,d}\blockvec{E}_d^{\top}\mat{Y}_d \|_F^2+ \| \mat{Y}_dE_{d+1}\blockscalar{g}_{d+1,d}^\top\|_F^2$;
 see, e.g.,~\cite{Simoncini2016}.
In our setting, not all the columns of $\truncbasis{U}_{d+1}$ and 
$\truncbasis{V}_{d+1}$ are orthonormal. Nonetheless, by denoting $\mat{X}_d^\textsc{tr}=\truncbasis{U}_d\mat{Y}_d^\textsc{tr}\truncbasis{V}_d^{\top}$,
we can still compute $\mat{Y}_d^\textsc{tr}$ by solving the same equation, namely
 \begin{equation*}\label{eqn:reduced_tr}
\trunchessenberg{H}_d\mat{Y}_d^\textsc{tr} + \mat{Y}_d^\textsc{tr} \trunchessenberg{G}_d^{\top} = \blockvec{E}_1{\blockscalar \beta}_1{\blockscalar \beta}_2^{\top} \blockvec{E}_1^{\top}.
\end{equation*}
 Notice that, in contrast to what happens in the full Arnoldi method, this matrix equation is not what would be obtained by imposing a Galerkin condition on the residual matrix, due to the non-orthogonality of the bases\footnote{\color{black} The condition $\truncbasis{U}_d^{\top} \mat{R}_d \truncbasis{V}_d=0$  yields
the reduced equation
$\truncbasis{U}_d^{\top} \mat{A}  \truncbasis{U}_d
\mat{Y}_d \truncbasis{V}_d^\top 
\truncbasis{V}_d
+\truncbasis{U}_d^{\top}\truncbasis{U}_d \mat{Y}_d \truncbasis{V}_d^\top
\mat{B} \truncbasis{V}_d
=\truncbasis{U}_d^{\top}C_1 C_2^\top \truncbasis{V}_d$.}. 
On the other hand, it allows us to derive an approximation without explicitly storing the two bases, and a computable estimate} {\color{black}for the residual norm. In particular, for the residual matrix $\mat{R}_d^{\textsc{tr}}$ stemming from the truncated scheme, the following result holds true.

\begin{proposition}
Let $\mat{R}_d^{\textsc{tr}}=\mat{A}\mat{X}_d^{\textsc{tr}}+\mat{X}_d^{\textsc{tr}}\mat{B}-C_1C_2^\top$, $\mat{X}_d^{\textsc{tr}}=\mat{U}_d\mat{Y}_d^{\textsc{tr}}V_d^\top$,  be the residual matrix stemming from the $d$th iteration of the truncated Krylov method. Then
\begin{equation}\label{eqn:restr}
\|\mat{R}_d^{\textsc{tr}}\|_F \le \sqrt{dr}(\|\mat{Y}_d^{\textsc{tr}}E_{d}\blockscalar{g}_{d+1,d}^\top\|_F+ \|\blockscalar{h}_{d+1,d}\blockvec{E}_d^{\top}\mat{Y}_d^{\textsc{tr}}\|_F).
\end{equation}

\end{proposition}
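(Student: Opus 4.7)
The strategy is to start from the factored residual expression \eqref{eq:residualform_po}, specialized to $\mat{Y}_d = \mat{Y}_d^{\textsc{tr}}$. Since $\mat{Y}_d^{\textsc{tr}}$ solves the projected Sylvester equation \eqref{eqn:reduced_tr}, the $(1,1)$ block inside the inner $2\times 2$ array vanishes, and expanding the product leaves only the two off-diagonal contributions:
\begin{equation*}
\mat{R}_d^{\textsc{tr}}
= \truncbasis{U}_d\,\bigl(\mat{Y}_d^{\textsc{tr}}\,\blockvec{E}_d\,\blockscalar{g}_{d+1,d}^\top\bigr)\,\blockvec{V}_{d+1}^\top
\;+\; \blockvec{U}_{d+1}\,\bigl(\blockscalar{h}_{d+1,d}\,\blockvec{E}_d^\top\,\mat{Y}_d^{\textsc{tr}}\bigr)\,\truncbasis{V}_d^\top.
\end{equation*}

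Next I would apply the triangle inequality term by term, followed by the sub\-multiplicative estimate $\|AXB\|_F \le \|A\|_2\,\|X\|_F\,\|B\|_2$ on each summand. The key point is controlling the spectral norms of the four basis factors. The blocks $\blockvec{U}_{d+1}$ and $\blockvec{V}_{d+1}$ are produced at the last step of the truncated block Arnoldi procedure via a QR factorization, so they have orthonormal columns; consequently $\|\blockvec{U}_{d+1}\|_2 = \|\blockvec{V}_{d+1}\|_2 = 1$. For the partially orthogonal bases $\truncbasis{U}_d$ and $\truncbasis{V}_d$ the columns are only locally orthonormal, but each individual column has unit norm (again by the same normalization step inside block Arnoldi), so $\|\truncbasis{U}_d\|_F^2 = \sum_{i=1}^d \|\blockvec{U}_i\|_F^2 = dr$. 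I would then use the routine inequality
\begin{equation*}
\|\truncbasis{U}_d\|_2 \le \|\truncbasis{U}_d\|_F = \sqrt{dr},
\end{equation*}
and analogously for $\truncbasis{V}_d$. Combining these bounds gives exactly $\sqrt{dr}$ in front of each of the two quantities on the right-hand side of \eqref{eqn:restr}, yielding the claim.

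Frankly, no step here poses a genuine obstacle; it is essentially book-keeping. The only subtlety worth emphasizing is why the constant $\sqrt{dr}$ is sharper than the $(d+1)r$ bound mentioned in the footnote: the improvement comes from having already split the residual into two summands, each containing \emph{only one} long (partially orthogonal) basis factor, so that the Frobenius-to-spectral inequality needs to be used only once per term rather than twice, and the trivially bounded factor $\blockvec{U}_{d+1}$ (resp.\ $\blockvec{V}_{d+1}$) carries the other side.
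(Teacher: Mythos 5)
Your proposal is correct and follows essentially the same route as the paper: the same two-term splitting of $\mat{R}_d^{\textsc{tr}}$ from~\eqref{eq:residualform_po}, the orthonormality of the last blocks $\blockvec{U}_{d+1},\blockvec{V}_{d+1}$, and the bound $\|\truncbasis{U}_d\|_F=\|\truncbasis{V}_d\|_F=\sqrt{dr}$ coming from the unit-norm columns. The only cosmetic differences are that the paper expands the squared Frobenius norm and applies Cauchy--Schwarz to the cross term (which amounts to your triangle inequality) and bounds $\|\truncbasis{U}_d M\|_F\le\|\truncbasis{U}_d\|_F\|M\|_F$ directly rather than passing through the spectral norm.
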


\begin{proof}
The expression in~\eqref{eq:residualform_po} is equivalent to writing $\mat{R}_d^{\textsc{tr}}$ as
$$
\mat{R}_d^{\textsc{tr}}=
\truncbasis{U}_d     \mat{Y}^{\textsc{tr}}_dE_{d}\blockscalar{g}_{d+1,d}^\top\blockvec{V}_{d+1}^{\top}+     \blockvec{U}_{d+1}\blockscalar{h}_{d+1,d}\blockvec{E}_d^{\top}\mat{Y}_d^{\textsc{tr}}\truncbasis{V}_d^{\top}.
$$
Therefore, by recalling that $U_{d+1}^\top U_{d+1}=V_{d+1}^\top V_{d+1}=\mat{I}_r$, it holds
$$
\|\mat{R}_d^{\textsc{tr}}\|_F\le
\|\truncbasis{U}_d\|_F\|\mat{Y}^{\textsc{tr}}_dE_{d}\blockscalar{g}_{d+1,d}^\top\|_F+ \|\truncbasis{V}_d\|_F\|\blockscalar{h}_{d+1,d}\blockvec{E}_d^{\top}\mat{Y}_d^{\textsc{tr}}\|_F.
$$
The result follows by noticing that $\|\mat{U}_d\|_F= \|\mat{V}_d\|_F= \sqrt{dr}$, since $\mat{U}_d, \mat{V}_d\in{\mathbb R^{n\times dr}}$ have unit norm columns. 
\end{proof}

The truncated Krylov method we just outlined is reminiscent of truncated methods for linear systems; 
see, e.g.~\cite[Sections~6.4.2 and~6.5.6]{Saad2003}. To the best of our knowledge, 
these truncated Arnoldi methods have never been used in the matrix equation setting. }

We now turn our attention to how we can integrate sketching and Krylov schemes for matrix equations.
If sketching is employed, we look for an approximate solution $\mat{X}_d^\textsc{sk}=\widehat{\truncbasis{U}}_d \mat{Y}_d^\textsc{sk} \widehat{\truncbasis{V}}_d^{\top}$.
Thanks to the WS-Arnoldi relations~\eqref{eqn:WSarnoldi}, we generalize the previous relation~\eqref{eq:residualform_po} for the residual to get
 { \footnotesize
 \begin{align}\label{eq:skecthedres_expression}
    &\mat{S}_{\truncbasis{U}}\mat{R}_d^\textsc{sk}\mat{S}_{\truncbasis{V}}^{\top} \\
    &=\mat{S}_{\truncbasis{U}}\widehat{\truncbasis{U}}_{d+1}\begin{bmatrix}
     (\widehat{\trunchessenberg{H}}_d+\widehat{\blockvec{H}}\blockvec{E}_d^{\top})\mat{Y}_d^\textsc{sk}+\mat{Y}^\textsc{sk}_d(\widehat{\trunchessenberg{G}}_d+\widehat{\blockvec{G}}E_D^{\top}) ^{\top}-\blockvec{E}_1\blockscalar{\beta}_1\blockscalar{\beta}_2^{\top}\blockvec{E}_1^{\top} & \mat{Y}_d^\textsc{sk}E_{\color{black}d}\blockscalar{g}_{d+1,d}^\top\\
     \blockscalar{h}_{d+1,d}\blockvec{E}_d^{\top}\mat{Y}^\textsc{sk}_d & 0 \\
 \end{bmatrix}\widehat{\truncbasis{V}}_{d+1}^{\top}\mat{S}_{\truncbasis{V}}^{\top}, \nonumber
\end{align}}%
where now $\blockvec{C}_1=\widehat{\truncbasis{U}}_d\blockvec{E}_1\blockscalar{\beta}_1$, $\blockvec{C}_2=\widehat{\truncbasis{V}}_d\blockvec{E}_1\blockscalar{\beta}_2$.
It is thus again natural to compute $\mat{Y}^\textsc{sk}_d$ so as to annihilate the (1,1)-block in the inner
matrix, that is
\begin{equation}\label{eqn:reduced_sk}
(\widehat{\trunchessenberg{H}}_d + \widehat{\blockvec{H}} \blockvec{E}_d^{\top})  \mat{Y}^\textsc{sk}_d +   \mat{Y}^\textsc{sk}_d (\widehat{\trunchessenberg{G}}_d + \widehat{\blockvec{G}}  \blockvec{E}_d^{\top})^{\top} = \blockvec{E}_1{\blockscalar\beta}_1{\blockscalar\beta}_2^{\top}\blockvec{E}_1^{\top}.
\end{equation}
This way  of determining $\mat{Y}^\textsc{sk}_d$ stands on more solid grounds than in the truncated case. Indeed,
the columns of $\widehat{\truncbasis{U}}_d$ and $\widehat{\truncbasis{V}}_d$ are orthogonal bases with 
respect to the sketched inner products defined by $\mat{S}_{\truncbasis{U}}^{\top}\mat{S}_{\truncbasis{U}}$ and $\mat{S}_{\truncbasis{V}}^{\top}\mat{S}_{\truncbasis{V}}$, respectively.
Thus, solving~\eqref{eqn:reduced_sk} is equivalent to imposing the following 
Galerkin $\mat{S}_*^{\top}\!\mat{S}_*$-orthogonality condition on the residual matrix, namely
\begin{equation*}\label{eqn:sres_gal}
\widehat{\truncbasis{U}}_d^{\top} \mat{S}_{\truncbasis{U}}^{\top}\mat{S}_{\truncbasis{U}} \mat{R}_d^\textsc{sk} \mat{S}_{\truncbasis{V}}^{\top}\mat{S}_{\truncbasis{V}} \widehat{\truncbasis{V}}_d = 0.
\end{equation*}
The equivalence follows from the expression~\eqref{eq:skecthedres_expression} 
and the WS-Arnoldi relations~\eqref{eqn:WSarnoldi2}.

For the sketched-and-truncated method, the Frobenius norm of the sketched residual, namely $\|\mat{S}_{\truncbasis{U}}\mat{R}_d^{\textsc{sk}}\mat{S}_{\truncbasis{V}}^{\top}\|_F$, can be
cheaply evaluated as
\begin{equation}\label{eq:resnorm_sk}
    \|\mat{S}_{\truncbasis{U}}\mat{R}_d^{\textsc{sk}}\mat{S}_{\truncbasis{V}}^{\top}\|_F^2= \| \mat{Y}_d^{\textsc{sk}}\blockvec{E}_d\blockscalar{g}_{d+1,d}^\top\|_F^2 + \|\blockscalar{h}_{d+1,d}\blockvec{E}_d^{\top} \mat{Y}_d^{\textsc{sk}}\|_F^2,
\end{equation}    
where  \eqref{eq:skecthedres_expression} was used together with the orthogonality of the bases $\widehat{\truncbasis{U}}_d$ and $\widehat{\truncbasis{V}}_d$ with respect to the sketched inner products.
Indeed, this corresponds to using the  $\mat{S}_*^{\top}\!\mat{S}_*$-norm, supported by 
the fact that this will be close to the Frobenius norm  
thanks to the $\varepsilon$-subspace embedding property. 
More precisely, 
{\color{black} with high probability,}
\begin{equation}\label{eq:sketched_res_norm_bound}
(1+\widetilde{\varepsilon})^{-1} \|\mat{S}_{\truncbasis{U}}\mat{R}_d^{\textsc{sk}}\mat{S}_{\truncbasis{V}}^{\top}\|_F^2\leq 
\|\mat{R}_d^{\textsc{sk}}\|_F^2 \le
(1-\widetilde{\varepsilon})^{-1}
\|\mat{S}_{\truncbasis{U}}\mat{R}_d^{\textsc{sk}}\mat{S}_{\truncbasis{V}}^{\top}\|_F^2,
\end{equation}
where $\widetilde{\varepsilon} = \varepsilon(2+\varepsilon)$; see \Cref{appendix:embedding}.

\subsection{Distance from the Arnoldi approximation}
It is relevant to study the discrepancy between
the solution to the optimal reduced equation and the solution
$\mat{Y}_d^{\textsc{sk}}$ to the sketched reduced equation (\ref{eqn:reduced_sk}). Analogous results can be obtained for the truncated approximation.
The full Galerkin method determines the Arnoldi approximation
$\mat{X}_d^{\textsc{full}}=\fullbasis{U}_d\fullcompressedsol{Y}_d^{\textsc{full}}\fullbasis{V}_d^{\top}$, where $\fullcompressedsol{Y}_d^{\textsc{full}}$ is such that 
\begin{equation}\label{eqn:full}
\fullhessenberg{H}_d\fullcompressedsol{Y}_d^{\textsc{full}}+ \fullcompressedsol{Y}_d^{\textsc{full}} \fullhessenberg{G}_d^{\top}=\blockvec{E}_1\blockscalar{\beta}_1^{\textsc{full}}(\blockscalar{\beta}_2^{\textsc{full}})^{\top}\blockvec{E}_1^{\top},
\end{equation}
with $\fullbasis{U}_d$, $\fullbasis{V}_d$ the orthonormal bases, $\fullhessenberg{G}_d, \fullhessenberg{H}_d$ the projected matrices from~\eqref{eq:arnoldi_relations}, and $C_1=\blockvecfull{U}_{1}\blockscalar{\beta}_1^{\textsc{full}}$, $C_2=\blockvecfull{V}_{1}\blockscalar{\beta}_2^{\textsc{full}}$; see, e.g., \cite{Simoncini2016}.

{
\color{black} Thanks to the sketched Arnoldi relation in \Cref{prop:Sarnoldi}, it is possible to easily} represent the sketched approximation in terms of the orthogonal basis $\fullbasis{U}_d$ instead of the basis $\truncbasis{\widehat U}_d$;
see~\cite[Section~6]{PalittaSchweitzerSimoncini2023}. 
First, note that there exist upper triangular matrices $\fullhessenberg{T}_{\mat{U}},\fullhessenberg{T}_{\mat{V}}$ such that $\truncbasis{\widehat  U}_d = \fullbasis{U}_d \fullhessenberg{T}_{\truncbasis{U}}$ and $\truncbasis{\widehat  V}_d = \fullbasis{V}_d \fullhessenberg{T}_{\truncbasis{V}}$. 
{\color{black}Analogously to how~\eqref{eqn:WSarnoldi} follows from~\eqref{eqn:bases} and~\eqref{eqn:Hhat_Ghat}, we can thus derive the relations}
\begin{eqnarray*}
\fullhessenberg{T}_{\truncbasis{U}} (\trunchessenberg{\widehat H}_d +\widehat HE_d^\top)\fullhessenberg{T}_{\truncbasis{U}}^{-1} &=& \fullhessenberg{H}_d - \fullbasis{U}_d^\top \blockvec{\widehat  U}_{d+1}\blockscalar{h}_{d+1,d} \blockscalar{t}_{U,d}^{-1}E_d^\top ,\\
\fullhessenberg{T}_{\truncbasis{V}} (\trunchessenberg{\widehat  G}_d+\widehat GE_d^\top) \fullhessenberg{T}_{\truncbasis{V}}^{-1} &=& \fullhessenberg{G}_d - \fullbasis{V}_d^\top \blockvec{\widehat  V}_{d+1}\blockscalar{g}_{d+1,d} \blockscalar{t}_{V,d}^{-1}E_d^\top,
\end{eqnarray*}
where $\blockscalar{t}_{U,d}=E_d^\top\fullhessenberg{T}_{\truncbasis{U}}E_d$ and $\blockscalar{t}_{V,d}=E_d^\top\fullhessenberg{T}_{\truncbasis{V}}E_d$.
Therefore, by pre- and postmultiplying~\eqref{eqn:reduced_sk} by $\fullhessenberg{T}_{\truncbasis{U}}$ and $\fullhessenberg{T}_{\truncbasis{V}}^\top$, respectively,
 we can write $\mat{X}_d^{\textsc{sk}}=\fullbasis{U}_d \fullcompressedsol{Y}_d^{\textsc{sk}}\fullbasis{V}_d^{\top}$, where $\fullcompressedsol{Y}_d^{\textsc{sk}}=\fullhessenberg{T}_{\truncbasis{U}}\mat{Y}_d^{\textsc{sk}}\fullhessenberg{T}_{\truncbasis{V}}^\top$ solves
\begin{equation}\label{eq:compressedproblem_sketched_full}
(\fullhessenberg{H}_d - {\widehat{\blockvec{R}}}_HE_d^\top)\fullcompressedsol{Y}_d^{\textsc{sk}} + \fullcompressedsol{Y}_d^{\textsc{sk}}(\fullhessenberg{G}_d - {\widehat{\blockvec{R}}}_GE_d^\top)^\top=\blockvec{E}_1\blockscalar{\beta}_1^{\textsc{full}}(\blockscalar{\beta}_2^{\textsc{full}})^{\top}\blockvec{E}_1^{\top},
\end{equation}
with
${\widehat{\blockvec{R}}}_H:=\fullbasis{U}_d^{\top}\blockvec{\widehat U}_{d+1}\blockscalar{h}_{d+1,d}\blockscalar{t}_{U,d}^{-1}$,
${\widehat{\blockvec{R}}}_G:=\fullbasis{V}_d^{\top}\blockvec{\widehat V}_{d+1}\blockscalar{g}_{d+1,d}\blockscalar{t}_{V,d}^{-1}$. 
The right-hand side is derived as
$\fullhessenberg{T}_{\truncbasis{U}}E_1\blockscalar{\beta}_1=
E_1 \blockscalar{t}_{U,1} \blockscalar{\beta}_1=\blockvec{E}_1\blockscalar{\beta}_1^{\textsc{full}}$. Similarly for $\blockscalar{\beta}_2^{\textsc{full}}$.

Subtracting~\eqref{eq:compressedproblem_sketched_full} from~\eqref{eqn:full} we obtain
$$
\fullhessenberg{H}_d (\fullcompressedsol{Y}_d^{\textsc{full}}-\fullcompressedsol{Y}_d^{\textsc{sk}}) + 
(\fullcompressedsol{Y}_d^{\textsc{full}}-\fullcompressedsol{Y}_d^{\textsc{sk}}) \fullhessenberg{G}_d^{\top} =
-{\widehat{\blockvec{R}}}_H\blockvec{E}_d^{\top} \fullcompressedsol{Y}_d^{\textsc{sk}} - \fullcompressedsol{Y}_d^{\textsc{sk}}\blockvec{E}_d{\widehat{\blockvec{R}}}_G\phantom{)\!\!\!\!\!\!}^{\top}.
$$ 
Hence, {\color{black} for $\mat{\mathcal{L}} = \fullhessenberg{G}_d\otimes \mat{I} + \mat{I} \otimes \fullhessenberg{H}_d $, we have}
$$
{\color{black}\|\mat{X}_d^{\textsc{full}}-\mat{X}_d^{\textsc{sk}}\|_F=}\|\fullcompressedsol{Y}_d^{\textsc{full}}-\fullcompressedsol{Y}_d^{\textsc{sk}}\|_F \le \|\mat{\mathcal{L}}^{-1}\|_F\,
 \|{\widehat{\blockvec{R}}}_H\blockvec{E}_d^{\top} \fullcompressedsol{Y}_d^{\textsc{sk}} + \fullcompressedsol{Y}_d^{\textsc{sk}}\blockvec{E}_d {\widehat{\blockvec{R}}}_G\phantom{)\!\!\!\!\!\!}^{\top}\|_F.
$$

{The above bound estimates the distance $\|\mat{X}_d^{\textsc{full}}-\mat{X}_d^{\textsc{sk}}\|_F$ in terms 
of a quantity associated with the sketched residual norm, and the spectral term $\|\mat{\mathcal{L}}^{-1}\|_F$, which coincides with the reciprocal of the separation between the matrices
$\fullhessenberg{G}_d$ and $\fullhessenberg{H}_d$.
{\color{black}Indeed, starting from (\ref{eq:skecthedres_expression}) with $\fullcompressedsol{Y}_d^{\textsc{sk}}$ solving the reduced equation, the residual matrix can be written in the (ideal) orthonormal bases as
 { 
 \begin{align}\label{eq:skecthedres_expression}
    \mat{R}_d^\textsc{sk}
    ={\fullbasis{U}}_{d+1}\begin{bmatrix}
     0 & \fullcompressedsol{Y}_d^\textsc{sk}E_{d}{\widehat{\blockvec{R}}}_G^\top\\
     {\widehat{\blockvec{R}}}_H\blockvec{E}_d^{\top}\fullcompressedsol{Y}^\textsc{sk}_d & 0
 \end{bmatrix}{\fullbasis{V}}_{d+1}^{\top}, \nonumber
\end{align}}%
so that both addends in the bound factor $\|{\widehat{\blockvec{R}}}_H\blockvec{E}_d^{\top} \fullcompressedsol{Y}_d^{\textsc{sk}} + \fullcompressedsol{Y}_d^{\textsc{sk}}\blockvec{E}_d {\widehat{\blockvec{R}}}_G\phantom{)\!\!\!\!\!\!}^{\top}\|_F $ go to zero as the residual norm goes to zero.
}
Hence, for a well-posed full orthogonalization procedure, we expect {\color{black} that, in case of convergence,} the whitened-sketched scheme yields a solution not far from that of the full orthogonalization method.}

One could also write an analogous bound for the error norms in terms of the bottom entries of $\fullcompressedsol{Y}_d^{\textsc{full}}$, for which we have a better understanding of the convergence behavior. The proof follows the derivation above, by first substituting $\fullcompressedsol{Y}_d^{\textsc{sk}}$ with $(\fullcompressedsol{Y}_d^{\textsc{sk}}-\fullcompressedsol{Y}_d^{\textsc{full}})+ \fullcompressedsol{Y}_d^{\textsc{full}}$ into~(\ref{eq:compressedproblem_sketched_full}) to get 
\begin{align*}
&\hspace{-2cm}(\fullhessenberg{H}_d - {\widehat{\blockvec{R}}}_HE_d^\top) (\fullcompressedsol{Y}_d^{\textsc{full}}-\fullcompressedsol{Y}_d^{\textsc{sk}}) + 
(\fullcompressedsol{Y}_d^{\textsc{full}}-\fullcompressedsol{Y}_d^{\textsc{sk}}) (\fullhessenberg{G}_d - {\widehat{\blockvec{R}}}_GE_d^\top)^\top \\
&=
-{\widehat{\blockvec{R}}}_H\blockvec{E}_d^{\top} \fullcompressedsol{Y}_d^{\textsc{full}} - \fullcompressedsol{Y}_d^{\textsc{full}}\blockvec{E}_d{\widehat{\blockvec{R}}}_G\phantom{)\!\!\!\!\!\!}^{\top},
\end{align*}
so that
$$
\|\fullcompressedsol{Y}_d^{\textsc{full}}-\fullcompressedsol{Y}_d^{\textsc{sk}}\|_F \le \|\mat{\mathcal{\widetilde L}}^{-1}\|_F\, 
 \|{\widehat{\blockvec{R}}}_H\blockvec{E}_d^{\top} \fullcompressedsol{Y}_d^{\textsc{full}} + \fullcompressedsol{Y}_d^{\textsc{full}}\blockvec{E}_d {\widehat{\blockvec{R}}}_G\phantom{)\!\!\!\!\!\!}^{\top}\|_F, 
$$
where $\widetilde{\mat{\mathcal{L}}} = (\fullhessenberg{G}_d- {\widehat{\blockvec{R}}}_GE_d^\top)\otimes \mat{I} + \mat{I} \otimes (\fullhessenberg{H}_d- {\widehat{\blockvec{R}}}_HE_d^\top)$.
On the other hand, the spectral term $\|\widetilde{\mat{\mathcal{L}}}^{-1}\|_F$ of the resulting bound is not easy to control as it depends on low-rank modifications to $\fullhessenberg{H}_d$ and $\fullhessenberg{G}_d$.

Following the same reasonings as above, very similar results can be derived for the truncated approach.
In this case, we can also borrow some of the results from the theory
of truncated algorithms for (vector) linear systems.
Indeed, using the vector form, we have that $x_d^{\textsc{tr}}=\vecop(\mat{X}_d^{\textsc{tr}})=
(\truncbasis{V}_d\otimes \truncbasis{U}_d)y_d^{\textsc{tr}}$ with $y_d^{\textsc{tr}}=\vecop(\mat{Y}_d^{\textsc{tr}})$. Under this form we can
derive a relation with the optimal Arnoldi solution $x_d^{\textsc{full}}$, that is the one that keeps both bases orthonormal, namely $\fullbasis{V}_d\otimes \fullbasis{U}_d$. To this end, let $\mat{\mathcal{M}}=\mat{I}\otimes \mat{A} + \mat{B}\otimes \mat{I}$ and $\mat{\mathcal{W}}:=\truncbasis{V}_d\otimes \truncbasis{U}_d$ be the Kronecker sum and product with quantities as in \eqref{eqn:arnoldi_tr}, and let  $\mat{\mathcal{P}}_{\mat{\mathcal{M}}}$ be the oblique projection $\mat{\mathcal{P}}_{\mat{\mathcal{M}}}=\mat{\mathcal{M}}\mat{\mathcal{W}} (\mat{\mathcal{W}}^{\top}\!\mat{\mathcal{M}}\mat{\mathcal{W}})^{-1}\mat{\mathcal{W}}^{\top}$, where we assume that the inner matrix is nonsingular, that is, the two spaces spanned by $\truncbasis{U}_d, \truncbasis{V}_d$ keep growing. The following equality was proved in~\cite[Th.~3.1]{Simoncini2005},
$$
\frac{\|r_d^{\textsc{full}}-r_d^{\textsc{tr}}\|}{\|r_d^{\textsc{full}}\|} = 
\frac{\|\mat{\mathcal{P}}_{\mat{\mathcal{M}}}w\|}{\|(\mat{I}-\mat{\mathcal{P}}_{\mat{\mathcal{M}}})w\|}, \qquad
w=v_{d+1}\otimes u_{d+1},
$$
where $r_d^{\textsc{full}}, r_d^{\textsc{tr}}$ are the vector residuals corresponding to the above approximations {and for simplicity we have assumed a rank-one right-hand side matrix $C_1 C_2^\top$}.
This {\it equality} shows that as long as the next vectors of the bases, 
$u_{d+1}, v_{d+1}$, provide significant new information with respect to the previously computed 
vectors, then the two residuals will be close to each other. 
In other words, orthogonality is not paramount, only the angle between the new vectors and the previous spaces is.

\section{Theoretical investigation}\label{sec:theory}
In this section we explore some of the theoretical aspects related to the presented sketching technique. 

\subsection{On the field of values of the projected problem}

It is well-known that for standard Krylov subspace methods for Sylvester equations, the well-posedness of the projected problems can be guaranteed by assuming that the fields of values of $\mat{A}$ and $-\mat{B}$ have empty intersection, namely $W(\mat{A})\cap W(-\mat{B})=\emptyset$; see, e.g.,~\cite{Simoncini2016}. In the sketching framework, stronger assumptions have to be considered to ensure that equation~\eqref{eqn:reduced_sk} has a unique solution for any $d$.

In general, the field of values of the matrix associated with the sketching procedure is not a significant tool to analyze well-posedness, as sketching  may substantially change the field of values of $\mat{A}$. In the following we illustrate the quality of this change. To this end, we first describe a general result on ``sketched'' fields of values that will then be adapted to our setting.

 \begin{proposition} Let $\mat{S}$ be an $\varepsilon$-subspace embedding for $\mathcal{V}$. 
Then for any complex vector $v$ such that  $v, \mat{A}v \in \mathcal{V}$ it holds that
$$
|\Re(v^*\mat{S}^\top\!\mat{S A} v) - \Re(v^*\!\mat{A} v) | \le \varepsilon \|v\| \|\mat{A} v\|,
$$
{\color{black} with high probability,}
{\color{black} where $\Re(\cdot)$ denotes the real part of a complex number}, so that
\begin{equation*}\label{eq:locationW_SA}
\frac{1}{1+\varepsilon} \Re\left(\frac{v^*\!\mat{A} v}{v^*v}\right) - \varepsilon\|v\| \|\mat{A} v\| \le
\Re(v^* \mat{S}^\top\!\mat{S A} v) \le \frac{1}{1-\varepsilon} \Re\left(\frac{v^*\!\mat{A} v}{v^*v}\right) + \varepsilon\|v\|\|\mat{A} v\| .
\end{equation*}
\end{proposition}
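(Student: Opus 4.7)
The plan is to reduce everything to the inner-product form of the $\varepsilon$-subspace embedding property~\eqref{eq:sketch_innerproduct}, which already gives a two-sided estimate of $\langle \mat{S}u,\mat{S}w\rangle$ around $\langle u,w\rangle$ for real $u,w\in\mathcal{V}$. Since the statement is formulated for a complex vector $v$, the first task is a routine real/imaginary splitting that brings everything into a real-vector format suitable for direct application of~\eqref{eq:sketch_innerproduct}.

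Writing $v=v_r+\mathrm{i}\,v_i$ with $v_r,v_i\in\mathbb{R}^n$, the hypothesis $v,\mat{A}v\in\mathcal{V}$ (interpreted via the complexification of the real subspace $\mathcal{V}$) forces $v_r,v_i\in\mathcal{V}$ and $\mat{A}v_r,\mat{A}v_i\in\mathcal{V}$. A direct expansion of the Hermitian forms then yields
\begin{align*}
\Re(v^*\mat{A} v) &= \langle v_r,\mat{A} v_r\rangle + \langle v_i,\mat{A} v_i\rangle, \\
\Re(v^*\mat{S}^\top\mat{S A} v) &= \langle \mat{S} v_r,\mat{S}\mat{A} v_r\rangle + \langle \mat{S} v_i,\mat{S}\mat{A} v_i\rangle.
\end{align*}
Applying~\eqref{eq:sketch_innerproduct} separately to the pairs $(v_r,\mat{A}v_r)$ and $(v_i,\mat{A}v_i)$, subtracting, and taking absolute values gives
$$
\bigl|\Re(v^*\mat{S}^\top\mat{S A} v)-\Re(v^*\mat{A} v)\bigr| \le \varepsilon\bigl(\|v_r\|\,\|\mat{A} v_r\| + \|v_i\|\,\|\mat{A} v_i\|\bigr).
$$
A one-line Cauchy--Schwarz in $\mathbb{R}^2$ applied to $(\|v_r\|,\|v_i\|)$ and $(\|\mat{A}v_r\|,\|\mat{A}v_i\|)$, combined with $\|v\|^2=\|v_r\|^2+\|v_i\|^2$ and $\|\mat{A}v\|^2=\|\mat{A}v_r\|^2+\|\mat{A}v_i\|^2$, compresses the right-hand side to $\varepsilon\|v\|\,\|\mat{A}v\|$, which is the first claimed bound.

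For the chained estimate in~\eqref{eq:locationW_SA}, the plan is to combine the bound just proved with the scalar embedding estimate $(1-\varepsilon)\|v\|^2\le v^*\mat{S}^\top\mat{S} v\le(1+\varepsilon)\|v\|^2$ applied to $v$ itself: rewriting $\Re(v^*\mat{A}v)=\|v\|^2\,\Re(v^*\mat{A}v/v^*v)$ and trading the factor $\|v\|^2$ for an $(1\pm\varepsilon)^{-1}$ multiple in the two directions produces the asymmetric prefactors $1/(1+\varepsilon)$ and $1/(1-\varepsilon)$ in front of the normalized Rayleigh quotient, while the additive $\varepsilon\|v\|\|\mat{A}v\|$ slack absorbs the cross terms. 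The main delicate point is the complex-to-real reduction in the first step, since~\eqref{eq:sketch_innerproduct} is stated only for real vectors; once that is handled, the remaining manipulations are purely algebraic.
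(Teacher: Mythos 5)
Your proof is correct, and it follows the paper's proof in its main outline (split $v$ into real and imaginary parts, expand $\Re(v^*\mat{S}^\top\!\mat{S}\mat{A}v)$ into two real inner products, and invoke the inner-product form~\eqref{eq:sketch_innerproduct} of the embedding property), but it differs in one genuine step. The paper stacks the two pairs into single vectors $[v_1;v_2]$ and $[\mat{A}v_1;\mat{A}v_2]$ and applies~\eqref{eq:sketch_innerproduct} \emph{once}, after first establishing (via \Cref{pro:SV_embedding} and \Cref{pro:kron_embedding}) that $\mat{I}_2\otimes\mat{S}$ is again an $\varepsilon$-subspace embedding; the constant $\varepsilon\|v\|\,\|\mat{A}v\|$ then appears immediately because the stacked vectors have norms $\|v\|$ and $\|\mat{A}v\|$. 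You instead apply~\eqref{eq:sketch_innerproduct} separately to $(v_r,\mat{A}v_r)$ and $(v_i,\mat{A}v_i)$ and recombine the two error terms with Cauchy--Schwarz in $\mathbb{R}^2$, which yields exactly the same constant. Your route is more elementary --- it needs nothing beyond the embedding property for individual real vectors of $\mathcal{V}$ and avoids the tensorized-embedding lemmas of the appendix --- whereas the paper's route reuses machinery it needs anyway for \Cref{thm:kron_embedding}. For the second display~\eqref{eq:locationW_SA}, your argument (write $\Re(v^*\mat{A}v)=\|v\|^2\,\Re(v^*\mat{A}v/v^*v)$ and trade $\|v\|^2$ for $(1\pm\varepsilon)^{-1}$ via~\eqref{eq:sketch}) is the same one-line justification the paper gives; note that, as in the paper, this trade implicitly presumes a normalization of $\|\mat{S}v\|$ and a sign condition on the Rayleigh quotient, so you are no less rigorous there than the original.
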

\begin{proof}
Let $v = v_1 + \imath v_2$. Then
\begin{eqnarray*}
\Re(v^* \mat{S}^\top\!\mat{S A} v) 
&=&
\langle \mat{S} v_1, \mat{S} \mat{A} v_1\rangle + \langle \mat{S} v_2, \mat{S} \mat{A} v_2\rangle\\
&=& \left\langle (I_2\otimes \mat{S})\begin{bmatrix}v_1\\ v_2\end{bmatrix}, (I_2\otimes \mat{S})\begin{bmatrix}Av_1\\ Av_2\end{bmatrix}\right\rangle. 
\end{eqnarray*}
By \Crefrange{pro:SV_embedding}{pro:kron_embedding}, $I_2\otimes \mat{S}$ is an $\varepsilon$-subspace embedding for
\[
\mathcal{V}_2 := \left\{\begin{bmatrix}x\\ y\end{bmatrix} : x, y \in \mathcal{V}\right\},
\]
so that by~\eqref{eq:sketch_innerproduct},
\[
\Re(v^*\!\mat{A} v) - \varepsilon \|v \| \,  \|\mat{A} v \| \leq \Re(v^* \mat{S}^\top\!\mat{S A} v) \leq \Re(v^*\!\mat{A} v) + \varepsilon \|v \| \,  \|\mat{A} v \|.
\]
The factor involving $1\pm\varepsilon$ stems from (\ref{eq:sketch}).
\end{proof}

\!\!The bounds above help us appreciate the
distance between elements in the sketched and nonsketched field of values.
Indeed, let range$(\mat{\widehat U})=\mathcal{V}$. If we write $v= \mat{\widehat U}_dy$, with $y\ne 0$, then it holds that $\|\mat{S}_{\truncbasis{U}}v\|=\|y\|$ thanks to $\mat{S}_{\truncbasis{U}}^{\top}\!\mat{S}_{\truncbasis{U}}$-orthogonality of $\mat{\widehat U}_d$. Moreover, using
\Cref{prop:Sarnoldi}
    $$v^*\mat{S}_{\truncbasis{U}}^{\top}\!\mat{S}_{\truncbasis{U}}\mat{A} v= y^*\mat{\widehat U}_d^{\top}\mat{S}_{\truncbasis{U}}^{\top}\!\mat{S}_{\truncbasis{U}}\mat{A} \mat{\widehat U}_dy = y^*(\widehat{\trunchessenberg{H}}_d + \widehat{\blockvec{H}}\blockvec{E}_d^{\top})y.$$

    Hence, points of $W(\widehat{\trunchessenberg{H}}_d + \widehat{\blockvec{H}}\blockvec{E}_d^{\top})$ may be distant from {$W(\widehat{\truncbasis{U}}^{\top}\!\!\mat{A}\widehat{\truncbasis{U}})$} as much as $\mathcal{O}(\|\mat{A}\|)$. The following example makes this point clearer.

\begin{figure}
\centering
\includegraphics[width=3in,height=2.3in]{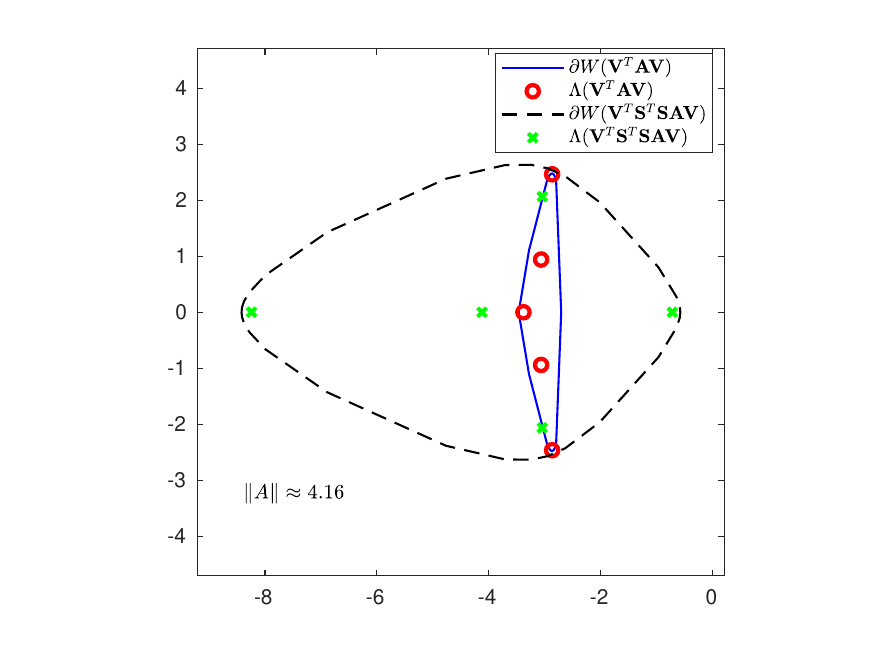}
\caption{\Cref{ex:fov1}. Fields of values $\partial W(\mat{V}^\top\!\!\mat{A}\mat{V})$ (solid blue line) and
$\partial W(\mat{V}^\top\!\mat{S}^\top\!\mat{S}\mat{A}\mat{V})$ (dashed black line), together with the corresponding eigenvalues.}\label{fig:fov1}
\end{figure}

\vskip 0.1in
\begin{example}\label{ex:fov1}
Consider the following code, building the Arnoldi basis matrix $\mat{V}$ for the given $\mat{A}$ of size $n\times n$;

\vskip 0.2in
\begin{verbatim}
n=30; rng(35); 
A=-toeplitz([3,-1,-0.5,zeros(1,n-3)],[3,1,1,zeros(1,n-3)]);
b=ones(n,1);b=b/norm(b);
K=[b,A*b,A*(A*b), A*(A*(A*b)), A*(A*(A*(A*b)))];  % Krylov matrix
d=size(K,2); s=2*d;
[V,R]=qr(K,0);  % Arnoldi basis
S = 1/sqrt(s)*randn(s,n);
\end{verbatim}
\vskip 0.2in

Note that the Arnoldi relation can be recovered as {\tt A*V(:,1:d-1) = V*H;} with {\tt H=R(1:d,2:d)/R(1:d-1,1:d-1);}.  
The sketching operator $\mat{S}$ is chosen as a random Gaussian matrix~\cite{oymak2018universality,MartinssonTropp2020}.
\Cref{fig:fov1} reports the fields of values\footnote{The boundary of the field of values is approximated by the function {\tt fv.m} in \cite{Higham.toolbox}.}  $W(\mat{V}^\top\!\!\mat{A}\mat{V})$ and
$W(\mat{V}^\top\!\mat{S}^\top\!\mat{S} \mat{A} \mat{V})$, together with the corresponding eigenvalues. The quantity
 $\|\mat{A}\|$ is also reported and the distance between the two boundaries clearly reaches the magnitude of this norm.
\end{example}
\vskip .1in

We next deepen the analysis of the properties of the projected equations~\eqref{eqn:reduced_sk}.
{\color{black}To simplify the presentation, we 
assume that the eigenvalues of $\mat{A}$ and $\mat{B}$  are} simple to avoid working with
invariant subspaces.

As a prerequisite for our analysis, we require the following technical lemma.
A similar result was proved in \cite[Section~7]{PalittaSchweitzerSimoncini2023} using the eigenvalue decomposition of the given matrix. To deal with matrix equations, here we generalize the result to the use of the Schur decomposition, which appears to be more appropriate in this context. {\color{black} We stress that this result, together with the subsequent \Cref{prop:reducedsol}, are possible thanks to the low-rank modification form for the projected matrix derived in \Cref{prop:Sarnoldi}. Expressions for the projected matrix using the pseudoinverse, such as in \cite{burke2023krylov,CortinovisKressnerNakatsukasa2022,GuettelSchweitzer2022}, would not allow one to obtain these simple derivations.}

\begin{lemma}\label{lemma:schur}
Let $(\widehat{\trunchessenberg{H}}_d+ \widehat h e_d^{\top})^{\top} =\schur{Q} \schur{R} \schur{Q}^*$ be the Schur decomposition of (the transpose of) the given matrix. For $\eta>0$, assume that there exists an index $\bar i\le d$ 
such that $\dist(\schur{R}_{ii},W(\widehat{\trunchessenberg{H}}_d))$ $=\mathcal{O}(\eta^{-1})$, $i=1, \ldots \bar i$ for an appropriate
spectral ordering in the Schur decomposition.
Then it holds that $\schur{Q}_{1,i}=\mathcal{O}((\eta/\xi)^{d})$, for $i=1, \ldots \bar i$, with 
$\xi=\max_{j\le \bar i} |1/(\widehat{\trunchessenberg{H}}_d)_{j+1,j}|$.
\end{lemma}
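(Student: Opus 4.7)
The plan is to adapt the eigenvector-based argument of~\cite[Section~7]{PalittaSchweitzerSimoncini2023} to the ordered Schur setting via an invariant-subspace analogue of the Hessenberg recurrence. Setting $\schur{Q}_1 := \schur{Q}(:,1{:}\bar i)$ and $\schur{R}_{11} := \schur{R}(1{:}\bar i,1{:}\bar i)$, block triangularity of $\schur{R}$ yields the invariant-subspace identity $(\widehat{\trunchessenberg{H}}_d+\widehat h e_d^{\top})^{\top}\schur{Q}_1 = \schur{Q}_1\schur{R}_{11}$. Substituting the splitting $M^{\top}=\widehat{\trunchessenberg{H}}_d^{\top}+e_d\widehat h^{\top}$ and reading off row $k<d$ (where $e_k^{\top}e_d=0$) gives, for the rows $q_j := e_j^{\top}\schur{Q}_1$,
\begin{equation*}
(\widehat{\trunchessenberg{H}}_d)_{k+1,k}\,q_{k+1} \;=\; q_k\,\schur{R}_{11} \;-\; \sum_{j=1}^{k}(\widehat{\trunchessenberg{H}}_d)_{j,k}\,q_j,
\end{equation*}
where the upper Hessenberg structure of $\widehat{\trunchessenberg{H}}_d$ truncates the sum on the right at index $k+1$.

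Next, a direct induction on $k$ would show that $q_k = q_1\,\phi_{k-1}(\schur{R}_{11})$, with $\phi_{k-1}(z) = \chi_{k-1}(z)/\prod_{j=1}^{k-1}(\widehat{\trunchessenberg{H}}_d)_{j+1,j}$ and $\chi_{k-1}$ the characteristic polynomial of the leading $(k-1)\times(k-1)$ principal submatrix of $\widehat{\trunchessenberg{H}}_d$. A key structural observation is that the roots of $\chi_{k-1}$ lie in $W(\widehat{\trunchessenberg{H}}_d)$, since the field of values of a principal submatrix is contained in that of the parent matrix. Taking $k=d$ and using orthonormality of the columns of $\schur{Q}_1$ to obtain $\|q_d\|\le\|\schur{Q}_1\|_F\le\sqrt{\bar{i}}$, the sought estimate reduces to controlling the (1,$i$) entries of the triangular system $q_1\,\phi_{d-1}(\schur{R}_{11}) = q_d$.

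The quantitative heart of the proof is then a lower bound on $|\phi_{d-1}(\schur{R}_{ii})|$ for $i\le \bar i$. Each of the $d-1$ root factors $\schur{R}_{ii}-\mu_j$ appearing in $\chi_{d-1}(\schur{R}_{ii})$ has magnitude of order $\eta^{-1}$ by the distance hypothesis (applied through the inclusion of the roots in $W(\widehat{\trunchessenberg{H}}_d)$), while the subdiagonal product $\prod_{j=1}^{d-1}(\widehat{\trunchessenberg{H}}_d)_{j+1,j}$ in the denominator is controlled by $\xi^{-(d-1)}$ through the definition of $\xi$. Together these give $|\phi_{d-1}(\schur{R}_{ii})|\gtrsim (\xi/\eta)^{d-1}$, and a column-by-column triangular back-solve of $q_1\,\phi_{d-1}(\schur{R}_{11}) = q_d$ delivers the claimed bound $|\schur{Q}_{1,i}|=\mathcal{O}((\eta/\xi)^{d})$ after absorbing one additional factor of $\eta/\xi$ from the combination of the $\|q_d\|$ bound and the inverse scaling.

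The main obstacle is this final back-solve: $\phi_{d-1}(\schur{R}_{11})^{-1}$ is upper triangular but not diagonal, so the off-diagonal entries couple different columns of $q_1$, and one must rule out cancellations that spoil the diagonal decay. I would address this by an induction on $i$, using the pointwise bound on $|\phi_{d-1}(\schur{R}_{ii})|$ and the explicit polynomial form of $\phi_{d-1}(\schur{R}_{11})$ to absorb the off-diagonal couplings while preserving the geometric rate $(\eta/\xi)^{d}$.
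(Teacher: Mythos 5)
Your argument runs on the same engine as the paper's proof: row $k<d$ of the invariant-subspace relation $(\widehat{\trunchessenberg{H}}_d+\widehat h e_d^{\top})^{\top}\schur{Q}_1=\schur{Q}_1\schur{R}_{11}$, where the Hessenberg structure turns it into a one-step recurrence whose growth factor per index is of order $\xi/\eta$. The difference is packaging. The paper iterates the recurrence entrywise, column by column, taking absolute values and concluding $|q_{k+1,i}|=\mathcal{O}(\xi\eta^{-1}|q_{k,i}|)$; you solve the recurrence in closed form, obtaining $q_k=q_1\phi_{k-1}(\schur{R}_{11})$ with $\phi_{k-1}=\chi_{k-1}/\prod_{j<k}(\widehat{\trunchessenberg{H}}_d)_{j+1,j}$, and then invert at $k=d$. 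Your version buys a cleaner identification of the cumulative growth with $|\chi_{d-1}(\schur{R}_{ii})|$, i.e.\ with distances from $\schur{R}_{ii}$ to the Ritz values of $\widehat{\trunchessenberg{H}}_d$ (correctly placed inside $W(\widehat{\trunchessenberg{H}}_d)$ so the hypothesis applies), and it handles all $\bar i$ columns at once; the paper's version avoids any matrix inversion. Be aware that both arguments rely on the same implicit two-sided reading of the hypotheses: your lower bound $|\phi_{d-1}(\schur{R}_{ii})|\gtrsim(\xi/\eta)^{d-1}$ needs $\dist(\schur{R}_{ii},W(\widehat{\trunchessenberg{H}}_d))\asymp\eta^{-1}$ and $|(\widehat{\trunchessenberg{H}}_d)_{j+1,j}|\asymp\xi^{-1}$ (as defined, $\xi$ only yields $|(\widehat{\trunchessenberg{H}}_d)_{j+1,j}|\ge\xi^{-1}$, which bounds the quotient from the wrong side), exactly as the paper's ``$=\mathcal{O}(\xi\eta^{-1}|q_{k,1}|)$'' must be read as ``is of the order of'' for normalization to force the first entry to be small rather than large.

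The obstacle you flag at the end --- that $\phi_{d-1}(\schur{R}_{11})^{-1}$ is upper triangular but not diagonal, so the columns of $q_1$ couple --- is real, but it is precisely the counterpart of the term $\schur{R}_{1:i-1,i}/\mat{N}_{k,k+1}$ in the paper's recurrence for the later Schur vectors, which the paper likewise absorbs into the $\mathcal{O}$ without further justification. Your proposal is therefore at the same level of rigor as the published argument; carrying out the induction on $i$ you sketch would, if anything, make the multi-column case more explicit than the original. The remaining discrepancy in the exponent ($d-1$ factors rather than $d$) is shared with the paper's own recurrence and immaterial at the stated $\mathcal{O}$ level.
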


\begin{proof}
Let us write $\schur{Q}=[q_1, \ldots, q_d]$.
Consider the first column in the Schur decomposition, that is
$(\widehat{\trunchessenberg{H}}_d^{\top}+ e_d \widehat h^{\top}) q_1 = q_1 \schur{R}_{1,1}$. Let
$\mat{N}:=\widehat{\trunchessenberg{H}}_d^{\top}$ and recall that $\mat{N}$ is lower Hessenberg. 
For each row $k<d$ of this eigenvalue equation we have 
$$
\mat{N}_{k,1:k+1} q_{1:k+1,1} = q_{k,1} \schur{R}_{1,1}.
$$
Reordering terms, $\mat{N}_{k,k+1} q_{k+1,1} = (\mat{N}_{k,k} - \schur{R}_{1,1}) q_{k,1} + \mat{N}_{k,1:k-1} q_{1:k-1,1}$,
from which we obtain
$$
 q_{k+1,1} = \frac{(\mat{N}_{k,k} - \schur{R}_{1,1})}{ \mat{N}_{k,k+1}} q_{k,1} + 
\frac{\mat{N}_{k,1:k-1}}{ \mat{N}_{k,k+1}} q_{1:k-1,1} .
$$
Taking into account the unit norm of $q_1$ it follows
$$
| q_{k+1,1}| \le \frac{|\mat{N}_{k,k} - \schur{R}_{1,1}|}{|\mat{N}_{k,k+1}|} |q_{k,1}| + 
\frac{\|\mat{N}_{k,1:k-1}\|}{ |\mat{N}_{k,k+1}|}.
$$
Hence, the entries of $q_1$ satisfy the relation
$| q_{k+1,1}| = \mathcal{O}(\xi\eta^{-1} |q_{k,1}|)$. Normalization of $q_1$ yields the
sought after result.

For a subsequent Schur vector $q_i$ with $1<i\le \bar i$ we have
$$
\mat{N}_{k,1:k+1} q_{1:k+1,i} = q_{k,i} \schur{R}_{i,i} + q_{k,1:i-1}\schur{R}_{1:i-1,i},
$$
from which 
$\mat{N}_{k,k+1} q_{k+1,i} = (\mat{N}_{k,k} - \schur{R}_{i,i}) q_{k,i} + \mat{N}_{k,1:k-1} q_{1:k-1,i}+ q_{k,1:i-1}\schur{R}_{1:i-1,i}$
follows, so that
$$
 q_{k+1,i} = \frac{(\mat{N}_{k,k} - \schur{R}_{i,i})}{ \mat{N}_{k,k+1}} q_{k,i} + 
\frac{\mat{N}_{k,1:k-1}}{ \mat{N}_{k,k+1}} q_{1:k-1,i}
+ \frac{ \schur{R}_{1:i-1,i}}{ \mat{N}_{k,k+1}} q_{1:k-1,i-1}.
$$
Taking once again absolute values we finally obtain
$$
 |q_{k+1,i}| \le \frac{|\mat{N}_{k,k} - \schur{R}_{i,i}|}{|\mat{N}_{k,k+1}|} |q_{k,i}| + 
\frac{\|\mat{N}_{k,1:k-1}\|}{ |\mat{N}_{k,k+1}|} 
+ \frac{ \|\schur{R}_{1:i-1,i}\|}{ |\mat{N}_{k,k+1}|}, 
$$
so that $|q_{k+1,i}| = \mathcal{O}(\xi\eta^{-1} |q_{k,i}|)$.
\end{proof}

If some of the eigenvalues of $\widehat{\trunchessenberg{H}}_d+ \widehat h e_d^{\top}$
are significantly far from the field of values of $\widehat{\trunchessenberg{H}}_d$,
the first entry of the corresponding Schur vectors will be far below
machine precision,  orders of magnitude smaller, especially for $d\gg 10$.
Therefore, to simplify the derivation, in the following proposition we will assume
that these Schur vector entries are in  fact zero.
Such sparsity property
allows us to derive the following result on the rank structure of the solution to the
{\color{black}Sylvester equation.

 \begin{proposition}\label{prop:reducedsol}
{\it Let $\widehat{\trunchessenberg{H}}_d^{\top}+ e_d \widehat h^{\top}=\schur{Q} R \schur{Q}^*$ and $\widehat{\trunchessenberg{G}}_d^{\top}+ e_d \widehat g^{\top}=\schur{P} K \schur{P}^*$ be Schur decompositions of 
the given matrices, with $\schur{Q}=[q_1, \ldots, q_d]$ and $\schur{P}=[p_1, \ldots, p_d]$. 
Assume that the spectra of $\widehat{\trunchessenberg{H}}_d^{\top}+ e_d \widehat h^{\top}$ and 
$-(\widehat{\trunchessenberg{G}}_d^{\top}+ e_d \widehat g^{\top})$ do not  intersect. Under the hypotheses of
\Cref{lemma:schur} in the limit for $\eta=0$, define the partitions $\schur{Q}=[\schur{Q}_0, \schur{Q}_1]$ with $e_1^{\top} \schur{Q}_0=0$
and $e_1^{\top}\schur{Q}_1\ne 0$, and  $\schur{P}=[\schur{P}_0, \schur{P}_1]$ with $e_1^{\top} \schur{P}_0=0$
and $e_1^{\top}\schur{P}_1\ne 0$.
Then the solution $\mat{Y}_d$ to
\begin{equation}\label{eqn:orig}
(\widehat{\trunchessenberg{H}}_d+ \widehat h e_d^{\top}) \mat{Y}_d + \mat{Y}_d (\widehat{\trunchessenberg{G}}_d+ \widehat g e_d^{\top})^* = e_1 \beta_1\beta_2 e_1^*
\end{equation}
{\color{black} where $\beta_1, \beta_2 > 0$ are arbitrary}, is given by $\mat{Y}_d=\schur{Q}_1 \mathcal{Z} \schur{P}_1^*$ where $\mathcal{Z}$ solves
\begin{equation}\label{eqn:orig3}
\schur{Q}_1^*(\widehat{\trunchessenberg{H}}_d+ \widehat h e_d^{\top}) \schur{Q}_1 \mathcal{Z}  + \mathcal{Z} \schur{P}_1^* (\widehat{\trunchessenberg{G}}_d+ \widehat g e_d^{\top})^* \schur{P}_1 =  
\schur{Q}_1^* e_1\beta_1\beta_2  e_1^* \schur{P}_1.
\end{equation}
}
\end{proposition}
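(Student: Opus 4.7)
The plan is to verify directly that $\mat{Y}_d = \schur{Q}_1 \mathcal{Z} \schur{Q}_1^*$ solves~\eqref{eqn:orig} as soon as $\mathcal{Z}$ solves~\eqref{eqn:orig3}. Set $M := \widehat{\trunchessenberg{H}}_d + \widehat h e_d^\top$, so that~\eqref{eqn:orig} reads $M\mat{Y}_d + \mat{Y}_d M^* = e_1\beta^2 e_1^*$ and the given Schur decomposition is $M^\top = \schur{Q}R\schur{Q}^*$. Partitioning $R$ conformably with $\schur{Q}=[\schur{Q}_0,\schur{Q}_1]$ as
\[
R = \begin{bmatrix} R_{00} & R_{01}\\ 0 & R_{11}\end{bmatrix},
\]
the range of $\schur{Q}_0$ is $M^\top$-invariant, $M^\top \schur{Q}_0 = \schur{Q}_0 R_{00}$, and in particular $\schur{Q}_1^* M^\top \schur{Q}_0 = 0$.

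Since $\widehat{\trunchessenberg{H}}_d$ and $\widehat h$ arise from a real Arnoldi process on a real coefficient matrix, $M$ is real and $M^\top = M^*$. Taking the adjoint of $\schur{Q}_1^* M^\top \schur{Q}_0 = 0$ therefore delivers the key identity
\[
\schur{Q}_0^* M \schur{Q}_1 = 0,
\]
while the hypothesis of \Cref{lemma:schur} in the limit $\eta\to 0$ supplies $\schur{Q}_0^* e_1 = 0$. I would then left/right-multiply~\eqref{eqn:orig} by $\schur{Q}^*$ and $\schur{Q}$. Under the ansatz, $\schur{Q}^*\mat{Y}_d\schur{Q}$ becomes block-diagonal with blocks $0$ and $\mathcal{Z}$; the coefficient $\schur{Q}^* M \schur{Q}$ is block lower triangular by the key identity (so $\schur{Q}^* M^* \schur{Q}$ is block upper triangular); and the right-hand side $\schur{Q}^*e_1\beta^2 e_1^*\schur{Q}$ has vanishing first block row and column because $\schur{Q}_0^* e_1=0$. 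A block-by-block check then shows that the $(0,0)$, $(0,1)$ and $(1,0)$ blocks collapse to $0=0$, while the $(1,1)$ block reproduces precisely~\eqref{eqn:orig3}.

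I expect the one delicate step to be the passage from the invariance $M^\top \schur{Q}_0 \subseteq \range(\schur{Q}_0)$, which is immediate from the Schur form, to the identity $\schur{Q}_0^* M \schur{Q}_1 = 0$ involving $M$ itself. This hinges squarely on $M$ being real; otherwise conjugation would only give $\schur{Q}_0^* \bar M \schur{Q}_1 = 0$ and the ansatz would leave a spurious cross-coupling in the $(0,1)$ block. Uniqueness of $\mat{Y}_d$, should one wish to strengthen ``is given by'' to ``equals,'' follows as usual from well-posedness of the Lyapunov operator, i.e., from the absence of eigenvalue pairs $\lambda_i,\lambda_j$ of $M$ with $\lambda_i + \bar\lambda_j = 0$.
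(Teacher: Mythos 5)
Your proof is correct, and it runs along the same computational axis as the paper's: conjugate equation~\eqref{eqn:orig} by the Schur basis $\schur{Q}$ of $(\widehat{\trunchessenberg{H}}_d+\widehat h e_d^{\top})^{\top}$, use $\schur{Q}_0^*e_1=0$ and the block-triangularity of $\schur{Q}^*(\widehat{\trunchessenberg{H}}_d+\widehat h e_d^{\top})\schur{Q}$, and read off the blocks. The difference is the direction of the argument. The paper \emph{derives} the structure: writing $\schur{Q}^*\mat{Y}_d\schur{Q}$ with unknown blocks $\schur{Z}_{11},\schur{Z}_{12},\schur{Z}_{22}$, it forces $\schur{Z}_{11}=0$ and $\schur{Z}_{12}=0$ from the homogeneous block equations~\eqref{eqn:block11}, which requires spectral separation conditions on $\schur{R}_{11}$ (and between $\schur{R}_{11}$ and $\schur{R}_{22}$) so that those homogeneous Lyapunov/Sylvester equations admit only the trivial solution; the $(2,2)$ block then yields~\eqref{eqn:orig3}. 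You instead \emph{verify} the ansatz $\mat{Y}_d=\schur{Q}_1\mathcal{Z}\schur{Q}_1^*$ and appeal to uniqueness of the full equation~\eqref{eqn:orig}. This buys you a cleaner block check (the cross-blocks collapse to $0=0$ automatically) at the price of assuming well-posedness of the large Lyapunov operator up front, whereas the paper's derivation localizes the needed nondegeneracy to the Schur blocks and constructively shows the cross-coupling vanishes. One genuinely useful point you make explicit, which the paper uses only tacitly when it writes $\schur{Q}^*(\widehat{\trunchessenberg{H}}_d+\widehat h e_d^{\top})\schur{Q}$ as the block lower triangular matrix $\schur{R}^*$: the passage from $M^{\top}$-invariance of $\range(\schur{Q}_0)$ to $\schur{Q}_0^*M\schur{Q}_1=0$ hinges on $M=\widehat{\trunchessenberg{H}}_d+\widehat h e_d^{\top}$ being real, so that $(M^{\top})^*=M$. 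Since all quantities here come from a real Arnoldi process with real sketches, this is satisfied, but flagging it is a worthwhile clarification.
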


\begin{proof}
Spectral separation ensures existence and uniqueness of $\mat{Y}_d$.
Left and right multiplication of (\ref{eqn:orig}) by $\schur{Q}^*$ and $\schur{P}$, respectively,
gives
\begin{equation}\label{eqn:orig1}
\schur{Q}^*(\widehat{\trunchessenberg{H}}_d+ \widehat h e_d^{\top}) \schur{Q} \schur{Q}^* \mat{Y}_d \schur{P} 
+ \schur{Q}^* \mat{Y}_d \schur{Q} \schur{Q}^*(\widehat{\trunchessenberg{G}}_d+ \widehat g e_d^{\top})^* \schur{P} =
\schur{Q}^* e_1\beta_1 \beta_2^* e_1^* \schur{P}.
\end{equation}
Using the hypotheses, $\schur{Q}^* e_1\beta_1 = [0;q]$ with $q=\schur{Q}_1^* e_1\beta_1$, and
$\schur{P}^* e_1\beta_2 = [0;p]$ with $p=\schur{P}_1^* e_1\beta_2$.
Let us use the partitions
\begin{align*}
\schur{Q}^*(\widehat{\trunchessenberg{H}}_d+ \widehat h e_d^{\top}) \schur{Q}
 &= : \!
\begin{bmatrix} 
\schur{R}_{11}^* & 0\\
\schur{R}_{12}^* & \schur{R}_{22}^*
\end{bmatrix},
\,\,
&\schur{P}^*(\widehat{\trunchessenberg{G}}_d+ \widehat g e_d^{\top}) \schur{P}
 &= : \!
\begin{bmatrix} 
\schur{K}_{11}^* & 0\\
\schur{K}_{12}^* & \schur{K}_{22}^*
\end{bmatrix},
\\
 \schur{Q}^* \mat{Y}_d \schur{P} &= :\!
\begin{bmatrix} 
\schur{Z}_{11} & \schur{Z}_{12}\\
\schur{Z}_{21} & \schur{Z}_{22}
\end{bmatrix},
\,\, 
&\schur{Q}^* e_1 e_1^* \schur{P} &= :\!
\begin{bmatrix} 
0 & 0\\
0 &qp^* 
\end{bmatrix}.
\end{align*}
Then equating all blocks in (\ref{eqn:orig1}) we obtain
\begin{equation}\label{eqn:block11}
\schur{R}_{11}^* \schur{Z}_{11} + \schur{Z}_{11} \schur{K}_{11}=0,
\quad
\schur{R}_{12}^* \schur{Z}_{11} + \schur{R}_{22}^* \schur{Z}_{21} + \schur{Z}_{21} \schur{K}_{11} = 0,
\end{equation}
and
\begin{equation}\label{eqn:block12}
\schur{R}_{11}^* \schur{Z}_{12} + \schur{Z}_{11} \schur{K}_{12} + \schur{Z}_{12}\schur{K}_{22}=0,
\quad
 \schur{R}_{12}^* \schur{Z}_{12}+\schur{R}_{22}^* \schur{Z}_{22} + \schur{Z}_{21}\schur{K}_{12}+  \schur{Z}_{22} \schur{K}_{22}=qp^*.
\end{equation}
From the left matrix equation in (\ref{eqn:block11})
 we obtain $\schur{Z}_{11}=0$, since the spectra of $\schur{R}_{11}$ and $-\schur{K}_{11}$ do not intersect. 
This in turn leads to $\schur{Z}_{21}=0$ from the right equation, since $\schur{K}_{11}$ and $-\schur{R}_{22}$
have distinct eigenvalues.
Similarly, $\schur{Z}_{12}=0$ can be obtained from the left matrix equation in \eqref{eqn:block12}. The right matrix equation in (\ref{eqn:block12}) gives the
sought after solution $\schur{Z}=\schur{Z}_{22}$.
\end{proof}
}

In light of the previous  result, for $\schur{Q}_1\in {\mathbb C}^{d\times \ell}$,
we call the
{\it effective field of values} of $\widehat{\trunchessenberg{H}}_d+ \widehat h e_d^{\top}$ the set
$$
W_{\textnormal{eff}}(\widehat{\trunchessenberg{H}}_d + \widehat h e_d^{\top})= 
W(\schur{Q}_1^*(\widehat{\trunchessenberg{H}}_d + \widehat h e_d^{\top}) \schur{Q}_1)=
\left \{ \frac{ v^*\schur{Q}_1^*(\widehat{\trunchessenberg{H}}_d + \widehat h e_d^{\top}) \schur{Q}_1 v}{v^*v}, 
\,\, 0\ne v\in{\mathbb C}^{\ell}\right \}.
$$
{\color{black}The difference between the field of values and its effective counterpart depends on how large the distance defined in \Cref{lemma:schur} is, and it may be significant if for instance the rank-one modification remarkably changes the eigenvalues.}
The following example illustrates the difference between the field of values and the effective field of values.

\begin{figure}
\centering
\includegraphics[trim= 150 200 100 200,clip,scale=0.32]{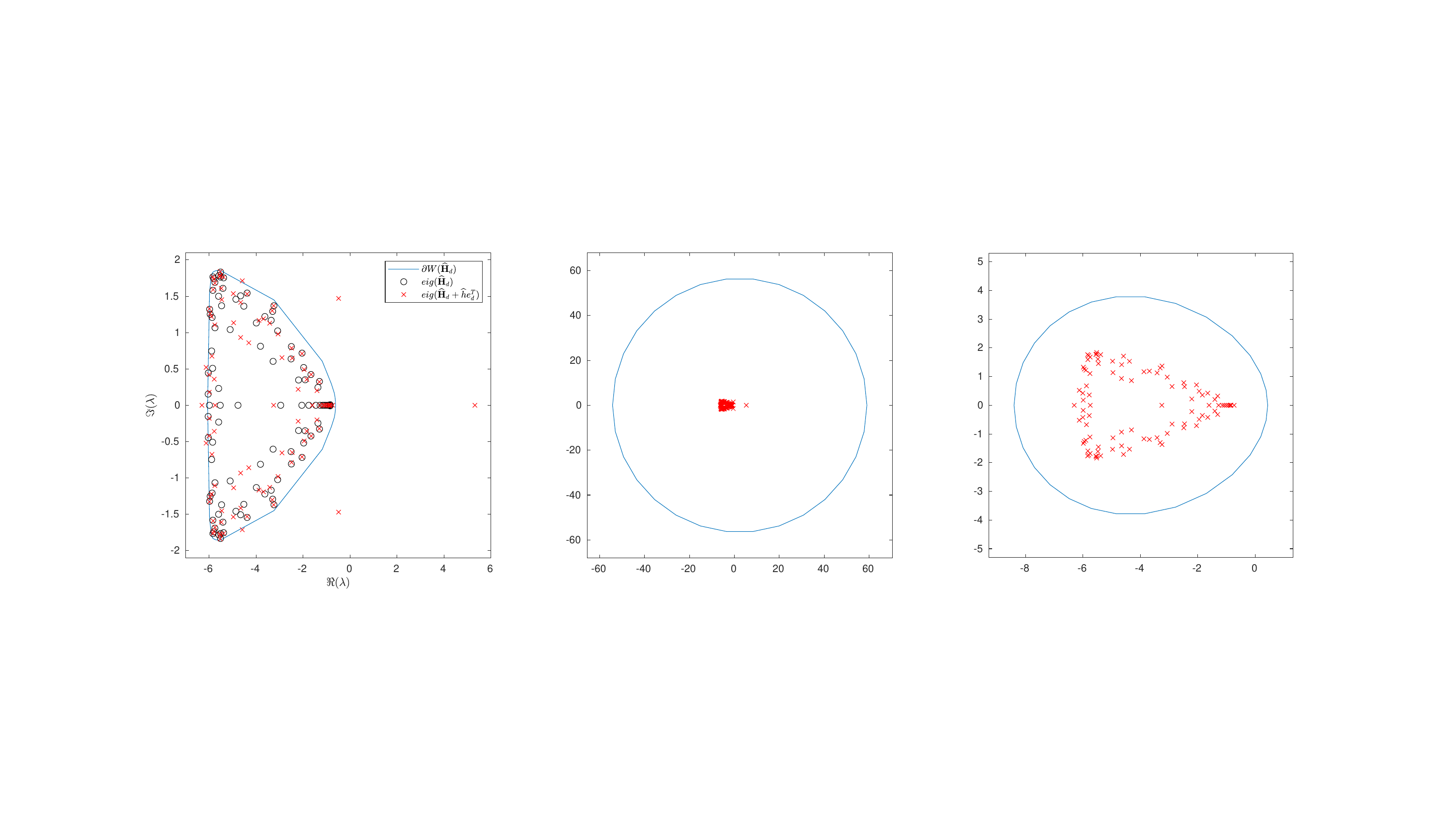}

\caption{\Cref{first_example}. Left: $\partial W(\mat{\widehat H}_d)$ (blue solid line), spectra of $\mat{\widehat H}_d$ (black circles), and $\mat{\widehat H}_d+\widehat he_d^\top$ (red crosses). Center: $\partial W(\mat{\widehat H}_d+\widehat he_d^\top)$ (blue solid line) and spectrum of $\mat{\widehat H}_d+\widehat he_d^\top$ (red crosses). Right: $\partial W(\schur{Q}_1^*(\mat{\widehat H}_d+\widehat he_d^\top)\schur{Q}_1)$ (blue solid line) and spectrum of $\schur{Q}_1^*(\mat{\widehat H}_d+\widehat he_d^\top)\schur{Q}_1$ (red crosses).} \label{fig_example4_5}
\end{figure}

\begin{example}\label{first_example}
For this example we take inspiration from~\cite[Example 5.6]{PalittaSchweitzerSimoncini2023}, {\color{black} and we set $\mat{B}=\mat{A}^\top$. Moreover}, we set $d=100$ and construct the matrix $\mat{\widehat H}_d$ and the vector $\widehat h$ as follows
\begin{quotation}
{\footnotesize
\noindent
\begin{verbatim}
rng(21*pi); d=100;
Hhat=toeplitz([-4, 2, zeros(1,d-2)], [-4, 1/2, 1/2, randn(1,d-3)/20]);
hhat=flipud(linspace(1,20,d))'.*randn(d,1);
\end{verbatim}
}
\end{quotation}
\vskip 0.1in

In \Cref{fig_example4_5} (left) we report $\partial W(\mat{\widehat H}_d)$ (blue solid line) along with the spectra of $\mat{\widehat H}_d$ (black circles) and $\mat{\widehat H}_d+\widehat he_d^\top$ (red crosses). This figure shows that, although $\mat{\widehat H}_d$ is negative definite, the matrix $\mat{\widehat H}_d+\widehat he_d^\top$ has eigenvalues with positive real part. In particular, the set $W(\mat{\widehat H}_d+\widehat he_d^\top)$ reported in \Cref{fig_example4_5} (center) significantly trespasses the imaginary axis. This may jeopardize the well-posedness of the projected equation~\eqref{eqn:orig} and/or the positive semidefiniteness of its solution $\mat{Y}_d$. This situation is mitigated by looking at the effective field of values of $\mat{\widehat H}_d+\widehat he_d^\top$, namely $W(\schur{Q}_1^*(\mat{\widehat H}_d+\widehat he_d^\top)\schur{Q}_1)$; see \Cref{fig_example4_5} (right). 
Indeed, $W(\schur{Q}_1^*(\mat{\widehat H}_d+\widehat he_d^\top)\schur{Q}_1)$ is much smaller than 
$W(\mat{\widehat H}_d+\widehat he_d^\top)$ and all the eigenvalues of $\schur{Q}_1^*(\mat{\widehat H}_d+\widehat he_d^\top)\schur{Q}_1$ have negative real part.
This means that the matrix $\mat{Y}_d$ can be constructed by first computing  the solution $\schur{Z}$ to~\eqref{eqn:orig3} and then setting $\mat{Y}_d=\schur{Q}_1\schur{Z}\schur{Q}_1^*$. This guarantees both the existence and the positive semidefiniteness of  $\mat{Y}_d$.
\end{example}

\subsection{Convergence analysis for the Lyapunov setting}

{\color{black} For the sake of simplifying the derived bounds, in this section we} focus  on the Lyapunov equation
\begin{equation}\label{eq:lyapunov}
\mat{A} \mat{X}+\mat{X}\mat{A}^{\top}=cc^{\top},
\end{equation}
where, for convenience, we assume that the right-hand side has rank~1 and that $\|c\|=1$. 

We next show that if $\mat{A}$ and the reduced matrix $\widehat{\trunchessenberg{H}}_d + \widehat{h}e_d^{\top}$ are both negative definite, a convergence analysis is possible in a very similar way to the standard, non-sketched setting. We closely follow the techniques from~\cite{simoncini2009convergence}.

\begin{theorem}\label{thm:lyapunov_faber}
Let $\mat{A}$ be negative definite and let $\mat{X}_d^\textsc{\textsc{sk}}$ denote the sketched-and-whitened Arnoldi approximation for the solution of~\eqref{eq:lyapunov}. Assume that $\widehat{\trunchessenberg{H}}_d + \widehat h e_d^{\top}$ is negative definite as well and denote $\alpha_{\max} := \max\{\alpha(\mat{A}),\alpha(\widehat{\trunchessenberg{H}}_d+\widehat h e_d^{\top})\}$.

Further, let $\mathbb{E} \subset \C^-$ be a compact, convex set containing the field of values of both $\mat{A}$ and $\widehat{\trunchessenberg{H}}_d + \widehat h e_d^{\top}$, let $\Phi_k, k = 0,1,\dots$ with $\deg(\Phi_k) = k$ denote the Faber polynomials with respect to $\mathbb{E}$ and let
\begin{equation}\label{eq:faber_series}
\exp(\tau z) = \sum\limits_{k=0}^\infty f_k(\tau) \Phi_k(z),
\end{equation}
be the Faber series of the exponential. Then,
{\color{black} with high probability,}
\begin{equation}\label{eq:convergence_faber_sketched}
\|\mat{X}-\mat{X}_d^{\textsc{\textsc{sk}}}\| \leq 2
\eta_{\varepsilon}
\sum_{k = d}^\infty \int_0^\infty \exp(\tau\alpha_{\max}) |f_k(\tau)| d\tau,
\end{equation}
with $\eta_{\varepsilon}=
\left(1+\sqrt{\frac{1+\varepsilon}{1-\varepsilon}}\right)\left(1+\frac{1}{\sqrt{1-\varepsilon}}\right)$ {\color{black} and $\varepsilon$ the parameter controlling the embedding quality; see~\eqref{eq:sketch} or~\eqref{eq:sketch_innerproduct}}.
\end{theorem}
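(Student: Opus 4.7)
The proof adapts the Arnoldi--Lyapunov analysis of~\cite{simoncini2009convergence} to the sketched-and-whitened setting. Since both $\mat{A}$ and $\mat{M}:=\widehat{\trunchessenberg{H}}_d + \widehat h e_d^{\top}$ are negative definite, the exact and reduced solutions admit the Lyapunov integral representations
\[
\mat{X} = \int_0^\infty e^{\tau\mat{A}} cc^{\top} e^{\tau\mat{A}^{\top}}\d\tau, \qquad
\mat{Y}_d^{\textsc{sk}} = \int_0^\infty e^{\tau\mat{M}} e_1\beta^2 e_1^{\top} e^{\tau\mat{M}^{\top}}\d\tau.
\]
Letting $u_\tau := e^{\tau\mat{A}}c$ and $v_\tau := \widehat{\truncbasis{U}}_d e^{\tau\mat{M}} e_1\beta$, the rank-one identity $uu^{\top}-vv^{\top}=(u-v)u^{\top}+v(u-v)^{\top}$ reduces the matrix error to the vector error of evaluating $e^{\tau\mat{A}}c$ by the sketched scheme:
\[
\|\mat{X}-\mat{X}_d^{\textsc{sk}}\| \le \int_0^\infty \|u_\tau-v_\tau\|\bigl(\|u_\tau\|+\|v_\tau\|\bigr)\d\tau.
\]

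The first factor is controlled directly. From $\|c\|=1$, $\|u_\tau\|\le e^{\tau\alpha(\mat{A})}$. For $v_\tau$, the $\mat{S}_{\truncbasis{U}}^{\top}\!\mat{S}_{\truncbasis{U}}$-orthonormality of $\widehat{\truncbasis{U}}_d$ combined with~\eqref{eq:sketch} gives $\|\widehat{\truncbasis{U}}_d y\| \le (1-\varepsilon)^{-1/2}\|y\|$ for all $y$, while $c=\widehat{\truncbasis{U}}_d e_1\beta$ and the embedding imply $|\beta|\le \sqrt{1+\varepsilon}$. Together,
\[
\|u_\tau\|+\|v_\tau\| \le \Bigl(1+\sqrt{\tfrac{1+\varepsilon}{1-\varepsilon}}\Bigr) e^{\tau\alpha_{\max}},
\]
which accounts for the first factor in $\eta_\varepsilon$.

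The bound on $\|u_\tau-v_\tau\|$ is where the Faber machinery enters, and this is the main obstacle. Expanding through~\eqref{eq:faber_series} separately for $\mat{A}$ and $\mat{M}$,
\[
u_\tau - v_\tau = \sum_{k=0}^\infty f_k(\tau)\bigl(\Phi_k(\mat{A})c - \widehat{\truncbasis{U}}_d \Phi_k(\mat{M})e_1\beta\bigr).
\]
The key algebraic step is a polynomial exactness statement: for $0\le k\le d-1$, $\Phi_k(\mat{A})c = \widehat{\truncbasis{U}}_d \Phi_k(\mat{M})e_1\beta$. This does not follow from a standard Arnoldi relation, since $\widehat{\truncbasis{U}}_d$ is not Euclidean-orthonormal, but can be derived by exploiting the injectivity of $\mat{S}_{\truncbasis{U}}$ on $\spK_{d+1}(\mat{A},c)$: lifting the sketched relation of \Cref{prop:sk_arn_rel} one obtains the ordinary relation $\mat{A}\widehat{\truncbasis{U}}_d = \widehat{\truncbasis{U}}_d \mat{M} + w\,e_d^{\top}$ for some $w\in\spK_{d+1}(\mat{A},c)$, and the upper Hessenberg structure of $\mat{M}$ then yields $\mat{A}^k c = \widehat{\truncbasis{U}}_d \mat{M}^k e_1\beta$ for $k\le d-1$ by a direct induction.

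With the Faber sum truncated to $k\ge d$, each surviving term is bounded by a Crouzeix-type inequality applied to both $\mat{A}$ and $\mat{M}$ (whose fields of values lie in $\mathbb{E}$), together once more with $\|\widehat{\truncbasis{U}}_d\|\le(1-\varepsilon)^{-1/2}$ and $|\beta|\le\sqrt{1+\varepsilon}$. Careful tracking of the $\varepsilon$-dependent constants produces
\[
\|u_\tau-v_\tau\| \le 2\Bigl(1+\tfrac{1}{\sqrt{1-\varepsilon}}\Bigr)\sum_{k=d}^\infty|f_k(\tau)|,
\]
where the factor $2$ absorbs the Crouzeix-type bounds for the two matrices. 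Multiplying the two displayed bounds and integrating in $\tau$ then yields~\eqref{eq:convergence_faber_sketched}. Thus the argument is structurally the classical one; the sketching enters only through two auxiliary estimates (the norm bound on $\widehat{\truncbasis{U}}_d$ and the bound on $|\beta|$) and through the necessity of lifting the sketched Arnoldi relation in order to establish polynomial exactness.
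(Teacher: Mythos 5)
Your proof is correct and follows essentially the same route as the paper's: integral representations of the exact and reduced Lyapunov solutions, the rank-one splitting of $u_\tau u_\tau^{\top}-v_\tau v_\tau^{\top}$, the Faber-series expansion combined with polynomial exactness up to degree $d-1$, the Beckermann/Crouzeix-type bound $\|\Phi_k(\cdot)\|\leq 2$, and the embedding estimates $\|\widehat{\truncbasis{U}}_d\|\leq (1-\varepsilon)^{-1/2}$ and $|\beta|\leq\sqrt{1+\varepsilon}$. The only (immaterial) difference is the bookkeeping of the factor $\beta$: you place it in the bound for $\|u_\tau\|+\|v_\tau\|$ and omit it from the intermediate bound for $\|u_\tau-v_\tau\|$, whereas the paper does the reverse; both allocations yield the same final constant $2\eta_{\varepsilon}$, and your explicit lifting of the sketched Arnoldi relation to justify polynomial exactness is a welcome detail that the paper leaves implicit.
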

\begin{proof}
We have $\mat{X}_d^\textsc{\textsc{sk}} = \widehat{\truncbasis{U}}_d\mat{Y}_d^{\textsc{sk}}\widehat{\truncbasis{U}}_d^{\top}$ with 
\begin{equation*}\label{eq:reduced_sk_mod}
(\widehat{\trunchessenberg{H}}_d + \widehat h e_d^{\top})\mat{Y}_d^{\textsc{sk}} + \mat{Y}_d^{\textsc{sk}} (\widehat{\trunchessenberg{H}}_d + \widehat h  e_d^{\top})^{\top} = e_1\beta^2e_1^{\top},
\end{equation*}
where $\beta = \|\mat{S}_{\truncbasis{U}}c\|$. Therefore, using the standard integral representation for the solution of the Lyapunov equation,
\begin{align}
\mat{X}_d^{\textsc{\textsc{sk}}} &= \widehat{\truncbasis{U}}_d\mat{Y}_d^{\textsc{sk}}\widehat{\truncbasis{U}}_d^{\top} \nonumber\\
 &= \int_0^\infty \widehat{\truncbasis{U}}_d\exp(\tau( \widehat{\trunchessenberg{H}}_d + \widehat h e_d^{\top}))e_1\beta^2e_1^{\top} \exp(\tau(\widehat{\trunchessenberg{H}}_d +\widehat  te_d^{\top})^{\top})\widehat{\truncbasis{U}}_d^{\top} d\tau, \nonumber\\ 
&= \int_0^\infty x^\textsc{\textsc{sk}}(\tau)(x^\textsc{\textsc{sk}}(\tau))^{\top} d\tau, \label{eq:integral_sketched}
\end{align}
with the short-hand notation $x^\textsc{\textsc{sk}}(\tau) := \widehat{\truncbasis{U}}_d\exp(\tau(\widehat{\trunchessenberg{H}}_d + \widehat h e_d^{\top}))e_1\beta$ for the sketched Krylov approximation of $x(\tau) := \exp(\tau \mat{A})c$.

Subtracting~\eqref{eq:integral_sketched} from the integral representation $\mat{X} = \int_0^\infty x(\tau)(x(\tau))^{\top} d\tau$ gives
\begin{align}
\|\mat{X}-\mat{X}_d^{\textsc{\textsc{sk}}}\| &\leq \int_0^\infty \|x(\tau)(x(\tau))^{\top}-x_d^\textsc{\textsc{sk}}(\tau)(x_d^\textsc{\textsc{sk}}(\tau))^{\top}\| \nonumber\\
&\leq \int_0^\infty (\|x(\tau)\| + \|x_d^{\textsc{\textsc{sk}}}(\tau)\|) \| x(\tau)-x_d^\textsc{\textsc{sk}}(\tau)\| d\tau \nonumber\\ 
&\leq (1+\|\widehat{\truncbasis{U}}_d\|)\int_0^\infty \exp(\tau\alpha_{\max}) \| x(\tau)-x^\textsc{\textsc{sk}}(\tau)\| d\tau , \label{eq:integral_error}
\end{align}
where for the last inequality we have used $\|x(\tau)\| \leq \exp(\tau\alpha_{\max})$  and $\|x_d^{\textsc{\textsc{sk}}}(\tau)\| \leq \exp(\tau\alpha_{\max})\|\widehat{\truncbasis{U}}_d\|$. To bound the norm inside the integral in~\eqref{eq:integral_error}, we use the Faber series~\eqref{eq:faber_series} together with the polynomial exactness property 
\begin{equation}\label{eq:polynomial_exactness}
\widehat{\truncbasis{U}}_d p_{d-1}(\widehat{\trunchessenberg{H}}_d + \widehat h e_d^{\top})e_1 = p_{d-1}(\mat{A})c,
\end{equation}
for any polynomial of degree at most $d-1$, which gives
\begin{equation}\label{eq:faber_difference}
x(\tau) - x_d^{\textsc{\textsc{sk}}}(\tau) = \sum_{k = d}^\infty f_k(\tau) (\Phi_k(\mat{A})c - \widehat{\truncbasis{U}}_d\Phi_k(\widehat{\trunchessenberg{H}}_d + \widehat h e_d^{\top})e_1\beta).
\end{equation}
Taking norms in~\eqref{eq:faber_difference} yields
\begin{equation}\label{eq:faber_difference_norm}
\|x(\tau) - x_d^{\textsc{\textsc{sk}}}(\tau)\| \leq \sum_{k = d}^\infty |f_k(\tau)| (\|\Phi_k(\mat{A})\| + \beta\|\widehat{\truncbasis{U}}_d\|\|\Phi_k(\widehat{\trunchessenberg{H}}_d + \widehat h e_d^{\top})\| ).
\end{equation}
By~\cite[Th\'eor\`eme~1.1]{beckermann2005image}, we have $\|\Phi_k(\mat{A})\| \leq 2, \|\Phi_k(\widehat{\trunchessenberg{H}}_d + \widehat h e_d^{\top})\| \leq 2$, by~\cite[Corollary~2.2]{balabanov2022randomized} it holds $\|\widehat{\truncbasis{U}}_d\| \leq \frac{1}{\sqrt{1-\varepsilon}}$ and further $\beta = \|\mat{S}_{\truncbasis{U}}c\|\leq \sqrt{1+\varepsilon}\|c\| = \sqrt{1+\varepsilon}$, because $\mat{S}_{\truncbasis{U}}$ is an $\varepsilon$-subspace embedding for $\spK_{d+1}(\mat{A},c).$ Inserting these relations into~\eqref{eq:faber_difference_norm} gives
\[
\|x(\tau) - x_d^{\textsc{\textsc{sk}}}(\tau)\| \leq 2\left(1+\sqrt{\frac{1+\varepsilon}{1-\varepsilon}}\right)\sum_{k = d}^\infty |f_k(\tau)|.
\]
Further inserting this into~\eqref{eq:integral_error} and using the bound $\|\widehat{\truncbasis{U}}_d\| \leq \frac{1}{\sqrt{1-\varepsilon}}$ once again completes the proof.
\end{proof}

Note that for the usually assumed sketching quality $\varepsilon = 1/\sqrt{2}$, we have
\[
2\eta_{\varepsilon}
\approx 19.45.
\]

The result of \Cref{thm:lyapunov_faber} is very similar to {\color{black} standard results for full Arnoldi in the (non-sketched) setting; see~\cite{simoncini2009convergence}}. In that case, a bound like~\eqref{eq:convergence_faber_sketched} holds, where the factor
$2\eta_{\varepsilon}$
is replaced by $8$, one can use $\exp(\tau\alpha(\mat{A}))$ instead of $\exp(\tau\alpha_{\max})$, and $\mathbb{E}$ only needs to contain $W(\mat{A})$, as it then also contains $W(\fullhessenberg{H}_d)$. This shows that as long as $W(\widehat{\trunchessenberg{H}}_d + \widehat{h}e_d^{\top})$ is not much larger than $W(\mat{A})$, one can expect the sketched method to converge similarly to the {\color{black} full Arnoldi method}. {\color{black} Let us also note that it is well-known that bounds of the form~\eqref{eq:convergence_faber_sketched} are often not tight in practice, also in the deterministic case. In particular, while the convergence slope might often be predicted quite well, the constants involved are often a large overestimate. These bounds are therefore not well-suited to be used as a stopping criterion for the method.}

{\color{black} We would like to remark that the polynomial exactness property~\eqref{eq:polynomial_exactness} used in the proof of \Cref{thm:lyapunov_faber} is readily available thanks to the WS-Arnoldi relations~\eqref{eqn:WSarnoldi}. Indeed, in~\cite[Theorem 2.7]{FroLS20} it has been shown that the only modifications to the upper Hessenberg matrix stemming from the Arnoldi process ($\widehat{\mathbf{H}}_d$ in our case) that preserve the polynomial exactness are indeed low-rank modifications acting only on the last (block) column. Once again, this result would not be available if we had to rely on expressions of the projected matrices involving pseudoinverses.}

For specific shapes of the set $\mathbb{E}$, the bound in \Cref{thm:lyapunov_faber} can be made more explicit. The following corollary exemplifies this for the case that $\mathbb{E}$ is an ellipse. {\color{black} It depends a lot on the specific $\mathbf{A}$ at hand whether assuming an enclosing ellipse is a reasonable assumption or not, and we merely show this as an example of the type of bound that can be obtained. For some other cases, e.g., a wedge-shaped set, similar bounds are possible.}

\begin{corollary}\label{cor:convergence_ellipse}
{\it Let the assumptions of \Cref{thm:lyapunov_faber} hold and further assume that $\mathbb{E}$ is an ellipse in $\mathbb{C}^-$ with center $(-c,0)$, foci $(-c,\pm \delta)$ and semi-axes $a_1$ and $a_2$, so that $\delta = \sqrt{a_1^2-a_2^2}$. Define $\rho_1 = \frac{a_1+a_2}{2}$ and $\rho_2 = \frac{c-\alpha_{\max}}{2\rho_1} + \frac{1}{2\rho_1}\sqrt{(c-\alpha_{\max})^2-\delta^2}$. Then
\[
\|\mat{X}-\mat{X}_d^{\textsc{\textsc{sk}}}\| \leq 
2\eta_{\varepsilon}
\frac{1}{\sqrt{(c-\alpha_{\max})^2-\delta^2}} \frac{\rho_2}{\rho_2-1}\left(\frac{1}{\rho_2}\right)^d.
\]
}
\end{corollary}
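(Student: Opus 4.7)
The plan is to specialize the Faber-series bound of \Cref{thm:lyapunov_faber} by exploiting two classical facts about ellipses: the Faber coefficients of $\exp(\tau z)$ on an ellipse admit a closed form in terms of modified Bessel functions, and the Laplace transform of $I_k$ against an exponential weight is itself known in closed form. Together these will reduce the sum-integral in the right-hand side of \eqref{eq:convergence_faber_sketched} to a single geometric series, whose ratio will turn out to equal $1/\rho_2$.

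To obtain the Faber coefficients, I would introduce the scaled Joukowski parametrization of the exterior of $\mathbb{E}$,
\[
\phi(w) = -c + \rho_1\,w + \frac{\delta^2}{4\rho_1}\,w^{-1}, \qquad |w|>1,
\]
which has capacity $\rho_1$ and sends the unit circle onto $\partial\mathbb{E}$. Writing $\exp(\tau\phi(w)) = e^{-c\tau}\exp\bigl(\frac{\tau\delta}{2}(rw + (rw)^{-1})\bigr)$ with $r := 2\rho_1/\delta$ and using the generating identity $\exp\bigl(\frac{x}{2}(t+t^{-1})\bigr) = \sum_{k\in\mathbb{Z}} I_k(x)\,t^k$, one reads off the Laurent expansion in $w$ and extracts Faber coefficients via the standard contour integral to obtain
\[
f_k(\tau) = e^{-c\tau}\,r^{k}\,I_k(\tau\delta), \qquad k \ge 0,
\]
which is nonnegative on the relevant range so that $|f_k(\tau)|$ equals this expression.

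Substituting into \eqref{eq:convergence_faber_sketched}, interchanging sum and integral (justified by positivity), and applying the Laplace identity $\int_0^\infty e^{-s\tau}I_k(a\tau)\,d\tau = (s-\sqrt{s^2-a^2})^k\,a^{-k}/\sqrt{s^2-a^2}$ with $s = c-\alpha_{\max}$ and $a = \delta$ reduces the bound to a geometric sum in $r(s-\sqrt{s^2-a^2})/a$. The condition $s>a$ required for the Laplace identity is exactly the condition under which $\rho_2$ is real and strictly greater than one. A short rationalization of the square root then yields $r(s-\sqrt{s^2-a^2})/a = 1/\rho_2$, so the sum telescopes to $\rho_2^{-d}\rho_2/(\rho_2-1)$, and multiplying by the remaining prefactor $2\eta_\varepsilon/\sqrt{(c-\alpha_{\max})^2-\delta^2}$ produces the stated bound. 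The main obstacle, and what actually earns the result, is bookkeeping across two independent normalizations: the Joukowski radius $r = 2\rho_1/\delta$, which records the shape of $\mathbb{E}$, and the inverse-Joukowski pre-image $w_+ = ((c-\alpha_{\max})+\sqrt{(c-\alpha_{\max})^2-\delta^2})/\delta$, which records how far $-\alpha_{\max}$ sits from $\mathbb{E}$. The quantity $\rho_2$ defined in the statement is engineered precisely so that $r/w_+ = 1/\rho_2$, after which the remainder of the argument is a one-line Laplace transform plus an elementary geometric sum.
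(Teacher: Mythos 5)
Your proposal is correct and follows essentially the same route as the paper's proof: express the Faber coefficients of $\exp(\tau z)$ on the ellipse as $e^{-c\tau}(2\rho_1/\delta)^k I_k(\tau\delta)$, apply the closed-form Laplace transform of $I_k$ with $s = c-\alpha_{\max}$, and sum the resulting geometric series with ratio $1/\rho_2$. The only difference is that you derive the Faber coefficients explicitly from the Joukowski parametrization and the Bessel generating function, whereas the paper simply imports them from \cite{simoncini2009convergence}; the bookkeeping identity $r(s-\sqrt{s^2-\delta^2})/\delta = 1/\rho_2$ that you highlight is exactly the ``standard algebraic manipulation'' the paper leaves implicit.
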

\begin{proof}
{\color{black}The proof works along the same lines as that of~\cite[Proposition~4.1]{simoncini2009convergence}.}
%
\end{proof}

The results above may appear to be of rather limited use, since  
$W(\widehat{\trunchessenberg{H}}_d + \widehat{h}e_d^{\top})$ may significantly
differ from $W(\mat{A})$. However, the results that led us to introduce the
effective field of values of $\widehat{\trunchessenberg{H}}_d + \widehat{h}e_d^{\top}$ enable us to obtain more informative convergence bounds.
Indeed, under the hypotheses of \Cref{prop:reducedsol}, we have
{\footnotesize
\begin{eqnarray*}
\mat{X}_d^{\textsc{\textsc{sk}}} 
&=& (\widehat{\truncbasis{U}}_d \schur{Q}_1) \schur{Z} (\widehat{\truncbasis{U}}_d \schur{Q}_1)^* \nonumber\\
&= &
 \int_0^\infty \widehat{\truncbasis{U}}_d \schur{Q}_1
\exp(\tau( \schur{Q}_1^*( \widehat{\trunchessenberg{H}}_d + \widehat h e_d^{\top})\schur{Q}_1)\schur{Q}_1^* e_1\beta^2e_1^{\top} \schur{Q}_1
\exp(\tau(\schur{Q}_1^*(\widehat{\trunchessenberg{H}}_d +\widehat  te_d^{\top})\schur{Q}_1)^*)\schur{Q}_1^*\widehat{\truncbasis{U}}_d^{\top} d\tau \nonumber\\ 
&=& \int_0^\infty x^\textsc{\textsc{sk}}(\tau)(x^\textsc{\textsc{sk}}(\tau))^{\top} d\tau, 
\end{eqnarray*}
}
where  
$$
x^\textsc{\textsc{sk}}(\tau)
=\widehat{\truncbasis{U}}_d \exp(\tau( \widehat{\trunchessenberg{H}}_d + \widehat h e_d^{\top})) e_1 \beta
=\widehat{\truncbasis{U}}_d \schur{Q}_1  \exp(\tau( \schur{Q}_1^*( \widehat{\trunchessenberg{H}}_d + \widehat h e_d^{\top})\schur{Q}_1))\schur{Q}_1^* e_1 \beta,
$$
and the last equality follows from noticing that under the proposition hypotheses,
$\schur{Q}^*e_1 = [0;\schur{Q}_1^* e_1]$.
In particular, in the proof of \Cref{thm:lyapunov_faber}, we have that
$\|x^\textsc{\textsc{sk}}(\tau)\| \le \exp(\tau \alpha_{\max}')$, where
$\alpha_{\max}'$ is the rightmost point of $W_{\textnormal{eff}}(\widehat{\trunchessenberg{H}}_d+ \widehat h e_d^{\top})$.
 Moreover,
\begin{equation*}\label{eq:faber_difference_norm1}
\|x(\tau) - x_d^{\textsc{\textsc{sk}}}(\tau)\| \leq 
\sum_{k = d}^\infty |f_k(\tau)| (\|\Phi_k(\mat{A})\| + 
\beta\|\widehat{\truncbasis{U}}_d\|\|\Phi_k(\schur{Q}_1^*(\widehat{\trunchessenberg{H}}_d + \widehat h e_d^{\top})\schur{Q}_1)\| ).
\end{equation*}

It thus follows that we can obtain a result such as \Cref{thm:lyapunov_faber} based on the \emph{effective} field of values of $\widehat{\trunchessenberg{H}}_d + \widehat h e_d^{\top}$, which can be significantly  closer to that of $\widehat{\trunchessenberg{H}}_d$  than $W(\widehat{\trunchessenberg{H}}_d + \widehat h e_d^{\top})$.

Corresponding convergence results can be obtained for the Sylvester equation, taking into account spectral properties of both left and right coefficient matrices, with only technical modifications. To avoid proliferation of similar results, we omit this derivation.

\section{The algorithms}\label{sec:algorithms}
We report the complete algorithms presented in the previous sections for solving the
Sylvester equation.
In \Cref{alg:whitening_krylov} we depict the sketched-and-truncated approach {\color{black}(with basis whitening)} for Sylvester equations. {\color{black}We label this algorithm \sPKSM.} The truncated Arnoldi method for~\eqref{eq:sylvester_equation} is illustrated in \Cref{alg:standardinnerprod_krylov} in \Cref{appendix:algorithm} and {\color{black}is labeled \tPKSM.}

Along with the definition of the projected problem and the residual norm expression, the main difference between {\color{black}\tPKSM} and {\color{black}\sPKSM} lies in the update of the
QR factorizations of the sketched bases in \cref{alg:QR_facts} of \Cref{alg:whitening_krylov}.
Similarly, the new block upper Hessenberg matrix
$\widehat{\trunchessenberg{H}}_{d+1} = \mat{T}_{\mat{U},d+1}\trunchessenberg{H}_{d+1}\mat{T}_{\mat{U},d+1}^{-1}$ is not
recomputed from scratch but rather updated: 
if $\mat{T}_{\mat{U},d+1}$ is as in \Cref{prop:sk_arn_rel}, then a direct computation shows that $\mat{T}_{\mat{U},d+1}^{-1}=\begin{bmatrix}
\mat{T}_{\mat{U},d}^{-1} & {\color{black}-}\mat{T}_{\mat{U},d}^{-1}\blockvec{T}_{H}\blockscalar{\tau}_{d+1}^{-1} \\
& \blockscalar{\tau}_{d+1}^{-1}\\
\end{bmatrix}$ so that

\begin{align*}
\small
\widehat{\trunchessenberg{H}}_{d+1}=
\begin{bmatrix}
   \widehat{\trunchessenberg{H}}_{d}+\blockvec{T}_{H}\blockscalar{h}_{d+1,d}{\blockscalar\tau}_{d}^{-1}\blockvec{E}_d^{\top} & \widehat{\blockvec{H}}_{\textnormal{new}}\\
{\blockscalar\tau}_{d+1}\blockscalar{h}_{d+1,d}{\blockscalar\tau}_{d}^{-1}\blockvec{E}_d^{\top} & 
{\blockscalar\tau}_{d+1}({\color{black}-}\blockscalar{h}_{d+1,d}{\blockscalar\tau}_{d}^{-1}\blockvec{E}_d^{\top}\blockvec{T}_{H}+
\blockscalar{h}_{d+1,d+1}){\blockscalar\tau}_{d+1}^{-1}\\
\end{bmatrix} ,
\end{align*}
where 
$\widehat{\blockvec{H}}_{\textnormal{new}}={\color{black}-}(\widehat{\trunchessenberg{H}}_{d}+
\blockvec{T}_{H}\blockscalar{h}_{d+1,d}{\blockscalar\tau}_{d}^{-1}\blockvec{E}_d^{\top})\blockvec{T}_{H}{\blockscalar\tau}_{d+1}^{-1}+
{\color{black}\mat{T}_{\truncbasis{U},d}}\blockvec{H}{\blockscalar\tau}_{d+1}^{-1}+\blockvec{T}_{H}\blockscalar{h}_{d+1,d+1}{\blockscalar\tau}_{d}^{-1}$
and $\blockvec{H}=[\blockscalar{h}_{1,d+1}^{\top},\ldots, \blockscalar{h}_{d,d+1}^{\top}]^{\top}\in\mathbb{R}^{dr\times r}$.
A corresponding update for $\widehat{\trunchessenberg{G}}_{d+1}$ is performed.

\begin{algorithm}[t]
\caption{Sketched-and-truncated Arnoldi method for Sylvester equations~(\sPKSM)}\label{alg:whitening_krylov}
\begin{algorithmic}[1]
\setstretch{1.2}
\smallskip

\Statex \textbf{Input:} \ \ $\mat{A}\in\R^{n\times n}$, $\mat{B}\in \R^{n \times n}$,  $\blockvec{C}_1 \in \R^{n\times r}$, $\blockvec{C}_2 \in \R^{n\times r}$, $\mat{S}_{\truncbasis{U}}\in\mathbb{R}^{s\times n}$, $\mat{S}_{\truncbasis{V}}\in\mathbb{R}^{s\times n_2}$,
integers $0<k\leq\text{maxit} \ll n$, $\tol>0$, $p\geq 1$ 
\Statex \textbf{Output:} $\mat{X}^{(1)},\mat{X}^{(2)}$ such that $\mat{X}^{(1)}(\mat{X}^{(2)})^{\top}=\mat{X}_d$ approximately solves $\mat{A} \mat{X}+\mat{X}\mat{B}=\blockvec{C}_1\blockvec{C}_2^{\top}$
\State Compute skinny QRs: $\blockvec{U}_1\blockscalar{\ell}=\blockvec{C}_1$, $\blockvec{V}_1\blockscalar{s}=\blockvec{C}_2$, $\mat{Q}_{\mat{U},1}\blockscalar{\beta}_1=\mat{S}_{\truncbasis{U}}\blockvec{C}_1$, $\mat{Q}_{V,1}\blockscalar{\beta}_2=\mat{S}_{\truncbasis{U}}\blockvec{C}_2$, and set $\mat{T}_{\mat{U},1}=\blockscalar{\beta}_1$, $\mat{T}_{V,1}=\blockscalar{\beta}_2$
 \For{$d=1,\ldots,\text{maxit}$}
\State Compute $\,\,\widetilde{\blockvec{U}}=\mat{A} \blockvec{U}_d$, $\,\,\widetilde{\blockvec{V}}=\mat{B}^{\top}\blockvec{V}_d$
\For{$i=\max\{1,d-k+1\},\ldots,d$}
\State Set $\widetilde{\blockvec{U}}= \widetilde{\blockvec{U}}-\blockvec{U}_i\blockscalar{h}_{i,d}\,\,$
with $\,\,\blockscalar{h}_{i,d}=\blockvec{U}_i^{\top}\widetilde{\blockvec{U}}$
\State Set $\widetilde{\blockvec{V}}= \widetilde{\blockvec{V}}-\blockvec{V}_i\blockscalar{g}_{i,d}\,\,$
with $\,\,\blockscalar{g}_{i,d}=\blockvec{V}_i^{\top}\widetilde{\blockvec{V}}$
\EndFor
\State Compute skinny QRs:  $\blockvec{U}_{d+1}\blockscalar{h}_{d+1,d}=\widetilde{\blockvec{U}}$ and $\blockvec{V}_{d+1}\blockscalar{g}_{d+1,d}=\widetilde{\blockvec{V}}$
\State Update QRs: $\mat{Q}_{\mat{U},d+1}\mat{T}_{\mat{U},d+1}=\mat{S}_{\truncbasis{U}}[\truncbasis{U}_{d},\blockvec{U}_{d+1}]$, 
\Statex\qquad\qquad\qquad\qquad$\mat{Q}_{V,d+1}\mat{T}_{V,d+1}=\mat{S}_{\truncbasis{V}}[\truncbasis{V}_d,\blockvec{V}_{d+1}]$~\label{alg:QR_facts}
\State Update $\widehat{\trunchessenberg{H}}_d=\mat{T}_{\mat{U},d}\trunchessenberg{H}_d\mat{T}_{\mat{U},d}^{-1}$, $\widehat{\blockvec{H}}=\blockvec{T}_{H}\blockscalar{h}_{d+1,d}\blockscalar{\tau_d}^{-1}$, 
\Statex\qquad\qquad\quad\ $\widehat{\trunchessenberg{G}}_d=\mat{T}_{V,d}\trunchessenberg{G}_d\mat{T}_{V,d}^{-1}$, $\widehat{\blockvec{G}}=\blockvec{T}_{G}\blockscalar{g}_{d+1,d}\blockscalar{\theta}_d^{-1}$ 
\If{$\mathtt{mod}(d,p)=0$}
\State Solve $\,\,(\widehat{\trunchessenberg{H}}_d+\widehat{\blockvec{H}}\blockvec{E}_d^{\top})  \mat{Y} +  \mat{Y} (\widehat{\trunchessenberg{G}}_d+\widehat{\blockvec{G}} \blockvec{E}_d^{\top})^{\top} = \blockvec{E}_1\blockscalar{\beta}_1\blockscalar{\beta}^{\top}_2\blockvec{E}_1^{\top}$ for $\mat{Y}$
\State Compute $\rho=\sqrt{\|\blockscalar{h}_{d+1,d}\blockvec{E}_d^{\top} \mat{Y}\|_F^2+\| \mat{Y}\blockvec{E}_d\blockscalar{g}_{d+1,d}^\top\|_F^2}$~\label{alg:line_rho_whitening}
\If{$\rho<\tol$}
\State Go to line~\ref{line:endfor_whitening}
\EndIf
\EndIf
\EndFor
\State Compute (possibly low-rank) factors $\mat{Y}_1, \mat{Y}_2$ such that 
$\mat{Y}\approx \mat{Y}_1\mat{Y}_2^{\top}$\label{line:endfor_whitening}
\State Retrieve $\mat{X}^{(1)}= \truncbasis{U}_d\mat{T}_{\mat{U},d}^{-1}\mat{Y}_1$, $\mat{X}^{(2)}=\truncbasis{V}_d\mat{T}_{V,d}^{-1} \mat{Y}_2$ by the two-pass step 
\label{line:end_sol}
\end{algorithmic}
\end{algorithm}

At convergence, the matrix $\mat{T}_{\mat{U},d}^{-1}\mat{Y}_d (\mat{T}_{V,d}^{-1})^{\top}$ is approximated by
a low-rank  factorization, 
$\mat{T}_{\mat{U},d}^{-1}\mat{Y}_d (\mat{T}_{V,d}^{-1})^{\top}
\approx \mat{Z}^{(1)}(\mat{Z}^{(2)})^{\top}$, with $\mat{Z}^{(1)},\mat{Z}^{(2)}\in\mathbb{R}^{dr\times \ell}$, $\ell\leq dr$ so that the final approximate 
solution $\mat{X}_d$ is available as the low-rank  product 
$\mat{X}_d\approx  \mat{X}^{(1)}(\mat{X}^{(2)})^{\top}=(\widehat{\truncbasis{U}}_d \mat{Z}^{(1)})(\widehat{\truncbasis{V}}_d \mat{Z}^{(2)})^{\top}$; cf.~\crefrange{line:endfor_whitening}{line:end_sol} of \Cref{alg:whitening_krylov}. 
 Since $\widehat{\truncbasis{U}}_d, \widehat{\truncbasis{V}}_d$ are not stored,
to finalize the computation a two-pass strategy is employed; see, e.g.~\cite{Kressner2008,PalittaSimoncini2018,FrommerSimoncini2008}.
More precisely, the original Arnoldi iteration is performed once again to iteratively
construct the two matrices $\widehat{\truncbasis{U}}_d\mat{Z}^{(1)}, \widehat{\truncbasis{V}}_d\mat{Z}^{(2)}$.
Since the orthogonalization coefficients are available in the matrices $\trunchessenberg{H}_d,  \trunchessenberg{G}_d$,
the cost of this second Arnoldi iteration does not involve inner products for
the local orthogonalization.
To improve efficiency, the reconstruction of $\widehat{\truncbasis{U}}_d\mat{Z}^{(1)}$ and 
$\widehat{\truncbasis{V}}_d\mat{Z}^{(2)}$
should be performed in chunks of $kr$ basis vectors at a time.

\Cref{tab_memory} summarizes the main memory requirements of full Arnoldi and the proposed methods. {\color{black}Both  \sPKSM} and {\color{black}\tPKSM} require the same amount of long vector allocations for fixed truncation parameter $k$ and number of iterations and we thus report only the storage demand of the former.
\begin{table}[t]
\centering
{\color{black}
\setlength{\tabcolsep}{3.1pt}
\begin{tabular}{|l|rr|rr|c|}
\hline
  &  \multicolumn{2}{c|}{Memory Allocation} & 
\multicolumn{2}{c|}{Memory Allocation} & \multicolumn{1}{c|}{Extra}\\
Method  &  \multicolumn{2}{c|}{Bases} & 
\multicolumn{2}{c|}{Approximate Solution} & \multicolumn{1}{c|}{Terms}\\
\hline
 Full Arnoldi       & $\fullbasis{U}_d, \fullbasis{V}_d$   & $2(d+1)r\cdot n$  & 
 $(\mat{X}_d^{\textsc{full}})^{(1)},
(\mat{X}_d^{\textsc{full}})^{(2)}$  & $2
\ell\cdot n$ & ${\mathcal O}(d^2r^2)$\\
\sPKSM &  
 $\widehat{\truncbasis{U}}_d, \widehat{\truncbasis{V}}_d$  & $2(k+1)r\cdot n$ & 
$\mat{X}^{(1)}, \mat{X}^{(2)}$& $2
\ell\cdot n$& ${\mathcal O}(d^2r^2+drs)$\\
\hline
  &  \multicolumn{2}{c|}{Cost} & 
\multicolumn{2}{c|}{Cost} & \multicolumn{1}{c|}{Cost}\\
&  \multicolumn{2}{c|}{Bases Construction} & 
\multicolumn{2}{c|}{Solution Projected Problem} & \multicolumn{1}{c|}{Two-Pass}\\
\hline
 Full Arnoldi       & \multicolumn{2}{c|}{$\mathcal{O}(\theta+nr^2d^2)$}& 
\multicolumn{2}{c|}{$\mathcal{O}(r^3d^3)$}  
 & -\\
\sPKSM &  
\multicolumn{2}{c|}{\color{black}$\mathcal{O}(\theta+nr^2dk+sr^2d^2)$}
&\multicolumn{2}{c|}{$\mathcal{O}(r^3d^3)$}  
& $\mathcal{O}(\theta+nr^2dk)$\\

\hline
\end{tabular}
\caption{Memory requirements and main computational costs (in flops) of full Arnoldi and \sPKSM. $\theta=\mathcal{O}((\nnz(A)+\nnz(B))rd)$.}\label{tab_memory}
}
\end{table}
For $k, \ell\ll d$ the main difference between the two classes of methods stands in the memory allocations for the generated bases. On the other hand, storage for the solution factors depends on the spectral decay properties of the solution to the reduced problem. Since the polynomial Krylov space generates redundant information, the numerical rank $\ell$ of $\mat{Y}_d$ may be significantly lower than its dimension\footnote{As already mentioned, we are assuming that the exact solution can be well approximated by a matrix of very low rank.}, allowing major savings in storing the approximate solution in factored form. We refer to the examples of \Cref{sec:numerical_examples} for experimental evidence. The extra allocations  concern matrices in the reduced spaces, together with the $s\times dr$ sketched basis\footnote{{\color{black}This bit is not present in the overall storage demand of \tPKSM.}}, which is kept orthonormal.

{\color{black}In \Cref{tab_memory} we also list the main computational costs of full Arnoldi and \sPKSM~in terms of flops. These amount to the bases construction and the solution of the projected problems. \sPKSM~also sees the extra cost coming from the second Arnoldi pass. The main difference between the two procedures lies in the orthogonalization cost. The full orthogonalization of Arnoldi requires $\mathcal{O}(nr^2d^2)$ flops whereas our sketched and truncated procedure cuts the cost down to $\mathcal{O}(nr^2dk+sr^2d^2)$ where the first term comes from the truncated Arnoldi step whereas the second one is related to the whitening of the sketched bases. The results in \Cref{sec:numerical_examples} will show that these savings in the number of flops do indeed lead to less time-consuming solution procedures. Notice that \tPKSM~and \sPKSM~basically have the same computational cost per iteration. The only difference is in the lack of any sketch-related cost in the former.}

{\color{black}Let us also mention that it is possible to use more stable means of orthogonalizing the bases in \sPKSM, e.g., a rank-revealing QR decomposition, as it was already suggested for whitening in~\cite{rokhlin2008fast} or a randomized Householder QR algorithm~\cite{grigori2023randomized}. This substantially increases the computational cost of the algorithm, though: Using a rank-revealing QR decomposition means that it is not possible to update the QR factors from one iteration to the next (cf.~\cref{alg:QR_facts} of \Cref{alg:whitening_krylov}). Instead, these factors have to be recomputed from scratch in every single iteration. Such an approach should therefore only be used for very challenging problems in which \sPKSM~becomes unstable. We note that we did not observe this situation in our experiments and therefore did not pursue such an approach any further.}

 
Finally, we remark that our stopping criterion is based on the residual norm or
its estimation, in both algorithms.
This criterion is checked periodically, not necessarily at each iteration,
to save computational efforts.

\section{Numerical experiments}\label{sec:numerical_examples}
In this section we report numerical experiments to illustrate the potential of the new algorithms. In all the experiments presented here, we employ the sketching matrix $\mat{S}=\sqrt{s/n}\cdot \mat{DNE}\in\mathbb{R}^{s\times n}$ where $\mat{E}\in\mathbb{R}^{n\times n}$ is a diagonal matrix containing Rademacher random variables on its diagonal, $\mat{N}\in\mathbb{R}^{n\times n}$ denotes the discrete cosine transform matrix, and $\mat{D}\in\mathbb{R}^{s\times n}$ randomly selects $s$ rows.\footnote{For reproducibility purposes, we set {\tt rng('default')} in all our experiments, and  the action of $\mat{N}$ is computed by the Matlab function {\tt dct}.}

All the experiments have been run using Matlab (version 2022b) on a machine with a 1.2GHz Intel quad-core i7 processor with 16GB RAM on an Ubuntu 20.04.2 LTS operating system. {\color{black} All the small-dimensional Sylvester equations arising in the projection-based procedures we test are solved by the Matlab function {\tt lyap} which implements the Bartels-Stewart algorithm~\cite{BartelsStewart1972}.

The code for reproducing the results presented in the following is publicly available online at 
\vspace{0.1cm}

{\small\url{https://github.com/palittaUniBO/Sketched-and-truncated-Krylov4MatrixEqs}}}

\begin{example}\label{Example 2}
We compare the new sketched-and-truncated method with the full Arnoldi method. Moreover, we analyze the role of the truncation parameter in the performance of the new algorithm.

We consider the matrices $\mat{A}$ and $\mat{B}$ in~\eqref{eq:sylvester_equation} coming from the discretization of a convection-diffusion differential operator of the form
\begin{equation}\label{eq:condiff_op}
    \mathcal{L}(u)=-\nu\Delta u+w\nabla u,
\end{equation}
on the unit square $[0,1]^2$ by centered finite differences using {\color{black}$\sqrt{n}$} nodes in each direction, yielding $\mat{A}, \mat{B} \in \mathbb{R}^{n \times n}$.
We consider the same viscosity parameter $\nu$ in the construction of both $\mat{A}$ and $\mat{B}$ whereas for the convection vectors we use $w_{\mat{A}}=(1,1)$ for $\mat{A}$ and $w_{\mat{B}}=(3y(1-x^2),-2x(1-y^2))$ for $\mat{B}$. {\color{black}The block vectors $C_1,C_2\in\mathbb{R}^{n\times r}$ in~\eqref{eq:sylvester_equation} have random entries drawn from the standard normal distribution and are such that $\|C_1C_2^{\top}\|_F=1$}.

\begin{figure}
\centering
\includegraphics[width=5in,height=2.3in]{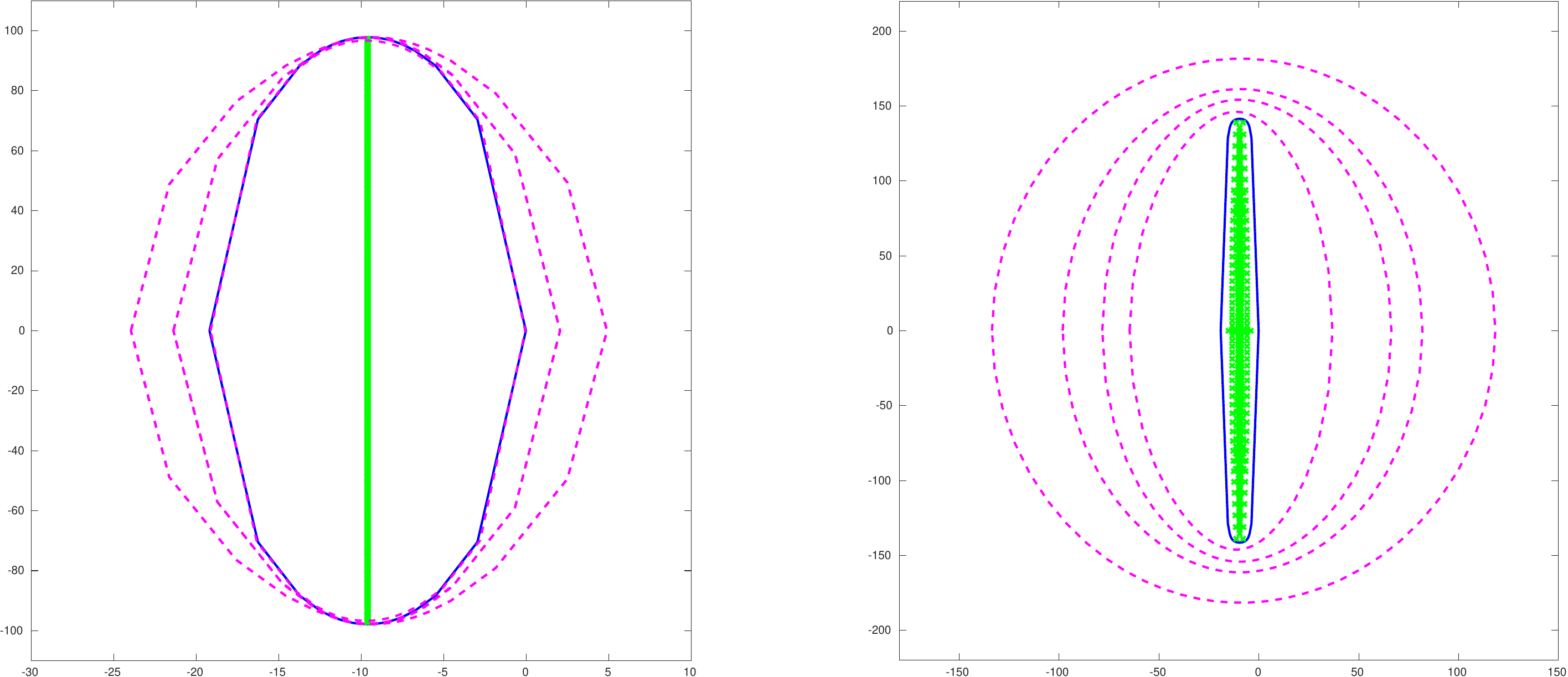}
\caption{\Cref{Example 2}. Field of values of the coefficient matrix (solid line), corresponding eigenvalues (approximately vertically aligned crosses) and field of values of the reduced matrices, for $d=10, 20, 30, 40$ and $r=1$ (expanding dashed lines as $d$ grows). Left: $\mat{A}$ and  $\widehat{\trunchessenberg{H}}_d+\widehat{\blockvec{H}}\blockvec{E}_d^{\top}$. Right: $\mat{B}$ and $\widehat{\trunchessenberg{G}}_d+\widehat{\blockvec{G}} \blockvec{E}_d^{\top}$. }\label{fig:fov_ex1}
\end{figure}

{\color{black} In \Cref{fig:fov_ex1} we report the boundary of the field of values of the two coefficient matrices (solid line, left plot for $\mat{A}$ and right plot for $\mat{B}$), together with those of the corresponding matrices $\widehat{\trunchessenberg{H}}_d+\widehat{\blockvec{H}}\blockvec{E}_d^{\top}$
and $\widehat{\trunchessenberg{G}}_d+\widehat{\blockvec{G}} \blockvec{E}_d^{\top}$, for $d=10, 20, 30, 40$ {\color{black}and $r=1$} (dashed lines). To be able to approximate the field of values of  $\mat{A}$ and $\mat{B}$ a coarse discretization of the two operators was considered, leading to $n=2500$. In light of this, we also set maxit$=100$ and $s=200$. We can appreciate that in both cases, and in particular for the matrix $\mat{B}$, the field of values of the reduced matrix is significantly larger than that of the corresponding coefficient matrix, and it is expanding with growing $d$. {\color{black} In principle, this behavior can potentially jeopardize the performance of \sPKSM. However, the following numerical results will show that \sPKSM~favorably compares to full Arnoldi. This supports our theoretical findings which indeed suggest that the field of values of the projected matrices is not the right tool to predict the convergence of the solver in the sketched Krylov framework.}

To analyze the computational performance of the proposed method, we} {\color{black}consider $n=90\,000$ and start by comparing the full Arnoldi method and \sPKSM. The threshold for the stopping criterion in full Arnoldi and \sPKSM~is $\tol=10^{-6}$.
The truncation parameter in \sPKSM~is set to $k=10$, the sketching dimension is $s=1\,600$ and maxit$=800$.}

{\color{black}In both procedures, the cost of solving the projected equations becomes more and more dominant as the dimensions of the reduced matrices increase, being of the order of {\color{black}$(rd)^3$}. To mitigate this shortcoming, it is common practice to solve the projected problem and check the stopping criterion every $p\geq 1$ iterations. We consider $p \in \{1,20\}$.}

{\color{black}
\begin{table}[t]
{\color{black}
\setlength{\tabcolsep}{2.5pt}
    \centering
    \begin{tabular}{r|rrr|rrr|rrr|rrr}
    & \multicolumn{6}{c}{Full Arnoldi} & \multicolumn{6}{|c}{\sPKSM}\\
    & \multicolumn{3}{c}{$p=1$} & \multicolumn{3}{c|}{$p=20$}    & \multicolumn{3}{c}{$p=1$} & \multicolumn{3}{c}{$p=20$}\\
    
         $\nu$  & $d$ & Time & Mem & $d$ & Time & Mem & $d$ & Time & Mem & $d$ & Time & Mem  \\
         \hline
         \hline
         \multicolumn{13}{c}{$r=1$}\\
         \hline
         \hline
         $0.1$ &   460 & 117.69 & {\it 922} & 460 & 90.49 & {\it 922} & 458 & 44.10 & {\it 56}  & 480 & 15.96  & {\it 54} \\
         $0.01$ &   553 & 185.65 & {\it 1\,108} & 560 & 133.50 & {\it 1\,122} & 550 & 77.80 & {\it 84} &  560 & 20.56 & {\it 84}\\
         $0.001$ & 745 & 409.26 & {\it 1\,492} & 760 & 255.58 & {\it 1\,522} & 770 & 228.39 & {\it 174} & 780 & 38.38 & {\it 174} \\
\hline
         \hline
         \multicolumn{13}{c}{$r=3$}\\
         \hline
         \hline
                 $0.1$ &   391 & 455.61 & {\it 2\,352} & 400 & 304.81 & {\it 2\,406} & 386 & 242.19 & {\it 156}  & 400 & 72.25  & {\it 156} \\
         
         $0.01$ &  501 & 856.75 & {\it 3\,012} & 520 & 534.57 & {\it 3\,126} & 496 & 504.37 & {\it 238} &  500 & 112.88 & {\it 238}\\

         $0.001$ & 694    & 2474.43 &  {\it 4\,170}  & 700 &  1052.56& {\it 4\,206}& 741 & 2006.41  & {\it 516} & 760 & 276.30  &  {\it 518} \\
         
    \end{tabular}
    \caption{\Cref{Example 2}. Performance of the full Arnoldi method and {\color{black}\sPKSM} {\color{black}as the viscosity $\nu$ varies}. The reported CPU timings are in seconds {\color{black} and $d$ is the number of iterations at termination}. ``Mem'' indicates the number of vectors of length $n$ used. {\color{black} The stopping criterion is checked every $p$ iterations. $r$ is the right-hand side rank.}}
    \label{tab1:ex2}
    }
\end{table}
}

    

In \Cref{tab1:ex2} we report the number of iterations and CPU times of the full Arnoldi method and the sketched-and-truncated scheme, as $\nu$ varies, {\color{black} and $r=1,3$}.
The two schemes perform a very similar number of iterations for all tested values of $\nu$. This shows that on this example the basis truncation does not significantly affect (delay) convergence and the implicit use of the $\mat{S}^{\top}\!\mat{S}$-orthogonal bases $\mat{\widehat U}_d$ and $\mat{\widehat V}_d$ pays off. We recall that in {\color{black}\sPKSM} we check the $\mat{S}^{\top}\!\mat{S}$-norm of the residual matrix whereas full Arnoldi computes its Frobenius norm. Although rather close thanks to the $\varepsilon$-embedding property of the sketching, these two norms may slightly differ (cf.~\eqref{eq:sketched_res_norm_bound}), leading to a small gap in the number of iterations needed to achieve a prescribed level of accuracy in the two algorithms.

Thanks to an effective truncation strategy with $k=10$,
 {\color{black}\sPKSM} always requires less CPU time than full Arnoldi, for a similar number of iterations.
Moreover, the rather small dimension of the sketching ($s=1\,600$), limits the impact of the full orthogonalization of the sketched bases on the overall solution process. {\color{black}We can also observe that using a larger value of $p$ is much more convenient {\color{black}(up to 85\% savings)} for {\color{black}\sPKSM} than for full Arnoldi (see \Cref{tab1:ex2} for $p=20$). This is because in the latter, the cost of constructing a fully orthonormal basis still remains largely dominant.}

{\color{black} Our primary concern is memory allocations. We} stress that only about $k=10$ long vector allocations are required by the truncated approach for each space, as opposed to the full basis. For more on-point insight on the memory requirements,  \Cref{tab1:ex2} (column labeled ``Mem'') includes the number of vectors of length $n$ stored by the two algorithms.
With an eye to \Cref{tab_memory}, for full Arnoldi  memory allocations are mainly due to the bases, whereas for {\color{black}\sPKSM} they are mainly due to storing the low-rank factors of the computed solution.
The memory requirements of {\color{black}\sPKSM} are always one order of magnitude smaller than what is needed by full Arnoldi, making the sketched-and-truncated Krylov subspace method very competitive.

{\color{black}
We also point out that the likely ill-conditioning of the non-orthogonal bases $\mathbf{U}_d$ and $\mathbf{V}_d$ does not have a strong impact on the convergence of \sPKSM. In \Cref{fig:basiscond} we fix $\nu=0.001$, $k=10$, $r=1$, and plot $\mathtt u \cdot \kappa_2(\mathbf{U}_d)$ and $\mathtt u \cdot \kappa_2(\mathbf{V}_d)$, where $\mathtt u$ denotes the unit roundoff, along with $\|\mathbf{R}_d\|_F$. We can observe that both $\mathbf{U}_d$ and $\mathbf{V}_d$ become numerically singular way before convergence is reached. On the other hand, this potential issue does not significantly affect the trend of $\|\mathbf{R}_d\|_F$. This was also noticed in~\cite{GuettelSimunec2023}.} 

\begin{figure}
\centering
\includegraphics[width=3.80in,height=2.3in]{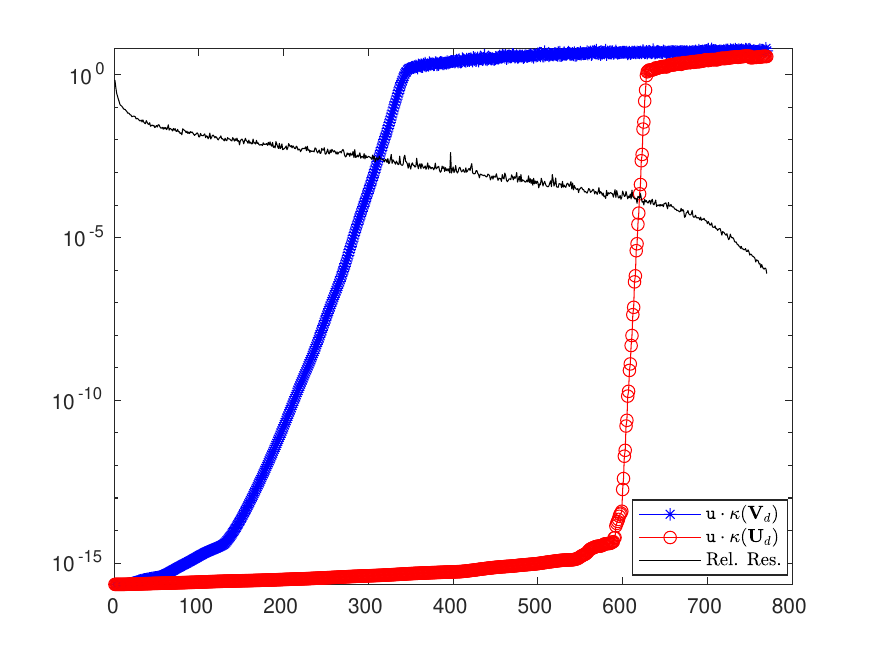}
\caption{\Cref{Example 2}. Scaled condition number of the bases $\mathbf{U}_d$ (red line with circles) and $\mathbf{V}_d$ (blue line with stars), along with the relative Frobenius norm of the residual matrix $\mathbf{R}_d$ (black solid line).}\label{fig:basiscond}
\end{figure}

We next turn our attention to the plain truncated scheme to analyze how different selections of the truncation parameters for the two spaces, namely $k_{\mat{A}}$ and $k_{\mat{B}}$, may affect performance.
In contrast to what happens for {\color{black}\sPKSM} (cf. \Cref{tab1:ex2} for $k_{\mat{B}}=10$ {\color{black}and $r=1$}), {\color{black}\tPKSM} was unable to achieve the prescribed level of accuracy for $k_{\mat{A}}=k_{\mat{B}}=10$ within maxit=1000 iterations for any considered value of $\nu$.
We thus fix $k_{\mat{A}}=40$ and in \Cref{tab2:ex2} we report the number of iterations and computational timings of {\color{black}\tPKSM} when varying $\nu$ and $k_{\mat{B}}$.
\begin{table}[t]
{\color{black}
\setlength{\tabcolsep}{1.7pt}
    \centering
    \begin{tabular}{r|rrr|rrr|r|rrr|rrr}
    & \multicolumn{6}{c}{Full Arnoldi} & \multicolumn{7}{|c}{\tPKSM}\\

    & \multicolumn{3}{c}{$p=1$} & \multicolumn{3}{c|}{$p=20$}
      &\multicolumn{1}{c}{ } & \multicolumn{3}{c}{$p=1$} & \multicolumn{3}{c}{$p=20$}
    \\
    
       $\nu$   & $d$ & Time& Mem & $d$ & Time & Mem & $k_{\mat{B}}$  & $d$ & Time & Mem & $d$ & Time & Mem \\
       \hline 
       \hline 
       \multicolumn{13}{c}{$r=1$, $k_\mathbf{A}=40$}\\
   \hline\hline
        \multirow{2}{*}{0.1} & \multirow{2}{*}{460}& \multirow{2}{*}{117.69}  & \multirow{2}{*}{{\it 922}} &
        \multirow{2}{*}{460}& \multirow{2}{*}{90.49} & \multirow{2}{*}{\it 922} & 40 &   565 & 107.11 & {\it 80} & 580 & 48.46& {\it 80} \\
        &&&&&  &&  60 &  601 & 146.25 &  {\it 100} & 620 & 75.77&  {\it 100}  \\
        \hline 
        \multirow{2}{*}{0.01}  &\multirow{2}{*}{553} & \multirow{2}{*}{185.65}& \multirow{2}{*}{\it 1\,108} & \multirow{2}{*}{560} & \multirow{2}{*}{133.50}&
        \multirow{2}{*}{\it 1\,122} & 40 &   671 & 168.58 &  {\it 98} & 680 & 63.01& {\it 98}  \\
        & & &&&& & 60 &  742 & 237.24 &  {\it 100} & 760 & 99.40 & {\it 100}  \\
       \hline 
       \hline 
       \multicolumn{13}{c}{$r=3$, $k_\mathbf{A}=100$}\\
   \hline\hline

   \multirow{2}{*}{0.1} & \multirow{2}{*}{391}& \multirow{2}{*}{455.61}  & \multirow{2}{*}{{\it 2\,346}} &
        \multirow{2}{*}{400}& \multirow{2}{*}{304.81} & \multirow{2}{*}{\it 2\,400} & 100 &   1196 & 11013.21 & {\it 600} & 1200 & 1534.59 & {\it 600} \\
        &&&&&  &&  150 & 839 & 3033.78 &  {\it 750} & 840 & 930.22&  {\it 750}  \\
        \hline 

        \multirow{2}{*}{0.01}  &\multirow{2}{*}{501} & \multirow{2}{*}{856.75}& \multirow{2}{*}{\it 3\,006} & \multirow{2}{*}{520} & \multirow{2}{*}{534.57}&
        \multirow{2}{*}{\it 3\,120} & 100 & 1069  & 6841.95  &  {\it 600} &1080 & 1203.43 &  {\it 600}   \\
        
        & & &&&& & 150 & 874  & 3529.69 &  {\it 750} & 880 & 1033.76 & {\it 750}  
            \end{tabular}
            
    \caption{\Cref{Example 2}. Performance of the full Arnoldi method and {\color{black}\tPKSM}. {\color{black} $k_{\mat{A}}$ and $k_{\mat{B}}$ denote the truncation parameters employed in \tPKSM~for the construction of the left and right space, respectively}. The reported CPU timings are in seconds. ``Mem'' indicates the number of vectors of length $n$ used.  }\label{tab2:ex2}
    }
    \end{table}
With these values of $k_{\mat{A}}$ and $k_{\mat{B}}$, {\color{black}\tPKSM} converges and
considerations about the use of $p\geq 1$ similar to the ones discussed for the sketched-and-truncated procedure still hold. Similarly for the memory requirements.

{\color{black} In \Cref{tab2:ex2} we also report the results achieved by setting $r=3$. We notice that in this case larger values of $k_{\mat{A}}$ and $k_{\mat{B}}$ are needed to be able to meet the required level of accuracy in 2\,000 iterations. In particular, we used $k_{\mat{A}}=100$ and $k_{\mat{B}}=100,150$. In spite of these sizable truncation parameters, \tPKSM~still shows a significant delay in the number of iterations when compared to full Arnoldi. This drawback makes \tPKSM~less appealing in terms of computational time.

Due to the large values of $k_{\mat{A}}$ and $k_{\mat{B}}$, also the memory requirements of \tPKSM~increase. Indeed, in general, these no longer amount to the allocation of the low-rank factors of the approximate solution as in \sPKSM, but rather to the storage of the truncated bases, namely $(k_{\mat{A}}+k_{\mat{B}})r$ vectors of length $n$. 
Nevertheless, the storage demand of \tPKSM~is still remarkably smaller than that of full Arnoldi for both $r=1,3$.}

To conclude this example, we would like to highlight a somehow unexpected phenomenon: 
{\color{black}\tPKSM} performs more iterations for $k_{\mat{B}}=60$ than for $k_{\mat{B}}=40$, in spite of the former having more orthogonal basis vectors.
This peculiar aspect shows that selecting sensible values of $k$ for {\color{black}\tPKSM} may be a rather difficult task. Further analysis is needed to fully understand this behavior.
\end{example}

 \begin{example}\label{Example_heat}
 For this example we take inspiration from~\cite[Example 5.1]{Henningetal2021}. We consider the heat equation $u_t + \Delta u = f$ in the cube $(-1,1)^3$ and time interval $[0,1]$, with homogeneous Dirichlet boundary conditions and zero initial value. 
 The source term is $f (t, x, y, z) :=
10 \, t\sin(t) \cos( 2\pi x) \cos( 2\pi y) \cos( 2\pi z)$ so that its space-time discretized form has rank equal to one.
The discretization  of this differential problem leads to the following Sylvester equation
\begin{equation}\label{eq:ex_heat}
    \mat{K} \mat{X}\mat{B}_1 + \mat{M}\mat{X}\mat{B}_2 = c_1c_2^\top,
\end{equation}    
where $\mat{B}_1$, $\mat{B}_2\in\mathbb{R}^{\ell \times \ell}$, with $\ell$ the number of time steps, are such that 
$$\mat{B}_1=\frac{1}{2(\ell-1)}\begin{bmatrix}
1 & & & \\
1 & 1 & & \\
& \ddots & \ddots & \\
& & 1 & 1\\
\end{bmatrix},\quad \mat{B}_2=\begin{bmatrix}
1 & & & \\
-1 & 1 & & \\
& \ddots & \ddots & \\
& & -1 & 1\\
\end{bmatrix},
$$
and correspond to the discrete Crank-Nicolson time integrator,
whereas $\mat{K}$, $\mat{M}\in\mathbb{R}^{n\times n}$ are, respectively, the stiffness and mass matrix coming from the discretization of the space component of the differential operator by linear finite elements; see~\cite{Henningetal2021} for more details.

We recast equation~\eqref{eq:ex_heat} in terms of a standard Sylvester equation by inverting $\mat{M}$ and $\mat{B}_1$. In particular, we consider the Sylvester equation
\begin{equation}\label{eq:ex_heat2}
    \mat{M}^{-1}\mat{K} \mat{X} + \mat{X}\mat{B}_2\mat{B}_1^{-1} = \mat{M}^{-1}c_1c_2^\top\mat{B}_1^{-1}.
\end{equation}    
Thanks to the moderate number of time steps $\ell$ we consider ($\ell=800$) and the sparsity pattern of $\mat{B}_1$, we can explicitly construct\footnote{{\color{black}This operation is performed by means of the Matlab routine {\tt mrdivide}.} On the other hand, by using a specifically adapted procedure, the solution of the reduced Sylvester equation could be performed without explicitly forming $\mat{B}_2\mat{B}_1^{-1}$; this fact should be considered for finer time discretizations.} the matrix $\mat{B}_2\mat{B}_1^{-1}$.  On the other hand, the mass matrix $\mat{M}$ inherits the possibly involved sparsity pattern of $\mat{K}$. Moreover, we employ large values of degrees of freedom $n$ in space so that $\mat{M}^{-1}\mat{K}$ cannot be explicitly computed. The action of this matrix is computed on the fly during the construction of the Krylov subspace $\mathcal{K}_d(\mat{M}^{-1}\mat{K}, \mat{M}^{-1}c_1)$. In particular, quantities of the form $\mat{M}^{-1}v$ are approximated by applying the preconditioned conjugate gradient method (PCG)~\cite{Hestenes1952} with diagonal (Jacobi) preconditioning. PCG is stopped as soon as the relative residual norm is smaller than $10^{-8}$.

Following~\cite{Henningetal2021}, equation~\eqref{eq:ex_heat2} can be solved by performing only a left projection, that is, only the left space $\mathcal{K}_d(\mat{M}^{-1}\mat{K}, \mat{M}^{-1}c_1)$ is constructed. The projected problem has the form
$$ \mat{\widetilde H}_d\mat{Y}+\mat{Y}\mat{B}_2\mat{B}_1^{-1}=e_1\mat{\beta}_1c_2^\top\mat{B}_1^{-1},$$
where $\mat{\widetilde H}_d\in\mathbb{R}^{d\times d}$ stems from the adopted projection technique, namely full Arnoldi ($\mat{\widetilde H}_d=\fullhessenberg{H}_d$), {\color{black}\tPKSM} ($\mat{\widetilde H}_d=\mat{H}_d$), and {\color{black}\sPKSM} ($\mat{\widetilde H}_d=\mat{\widehat H}_d+\widehat He_d^\top$).
\begin{table}[t]
    \centering
        {
    \begin{tabular}{r|rrr|rrr|rrr}
    \hline
    & \multicolumn{3}{c}{Full Arnoldi} & \multicolumn{3}{|c}{{\color{black}\tPKSM}}& \multicolumn{3}{|c}{{\color{black}\sPKSM}} 
    \\
    
         $n$  & $d$ & Time & Mem & $d$ & Time & Mem & $d$ & Time & Mem 
         \\
         \hline
         $393968$ &   260 & 100.93 &  {\it 261} & 500 & 172.96 &  {\it 20} & 260 & 84.57 & {\it 16} 
         \\
         $482768$ &    { 280}& 145.43 & {\it 281}  & 780 & 317.47 & {\it 22}  & 280 & 131.25 & {\it 16} 
         \\
         $685214$ &    { 300}& 224.10 &  {\it 301} & 720 & 352.41 & {\it 22}  & 280 & 196.65 & {\it 16} 
         \\
         $880370$ &    {\ 320}& 327.02 &  {\it 321} & 860 & 531.06 &  {\it 22} & 340 & 258.65 & {\it 16}
         \\        
         \hline
           \end{tabular}
    \caption{\Cref{Example_heat}. Performance of all considered methods for varying $n$ ($\ell=800$, truncation $k=3$). The reported CPU timings are in seconds. ``Mem'' indicates the number of vectors of length $n$ used. }
    \label{tab1:ex4}
    }
\end{table}

We compare the three schemes as $n$ varies, in terms of number of iterations and computational time, by checking the stopping criterion every $p=20$ iterations, and {$\mathtt{tol}=10^{-6}$}. 
For the sketched-and-truncated scheme we adopt $s=1200$; the truncation parameter takes the value $k=3$ in both {\color{black}\sPKSM} and {\color{black}\tPKSM}.

The results displayed in~\Cref{tab1:ex4} show that full Arnoldi and {\color{black}\sPKSM} perform a rather similar number of iterations for all tested values of $n$
with {\color{black}\sPKSM} being always faster than full Arnoldi.
 On the contrary, 
{\color{black}\tPKSM} needs a much larger number of iterations to converge, at least twice as many as with {\color{black}\sPKSM}, leading to correspondingly large CPU time.

For this problem, the real potential of {\color{black}\sPKSM} and {\color{black}\tPKSM} can be appreciated by looking at their memory consumption. Indeed, both 
schemes require storing one order of magnitude fewer vectors of length $n$ than what is done in full Arnoldi, making the memory requirements of the former solvers extremely affordable.
\end{example}

\begin{example}\label{Example 3}
In this example we compare the sketched-and-truncated strategy with the rational Krylov subspace method (RKSM); see, e.g.~\cite{DRUSKIN2011546}.
We consider the matrices $\mat{A},\mat{B}\in\mathbb{R}^{n\times n}$ in~\eqref{eq:sylvester_equation} coming from the centered finite differences discretization of three-dimensional convection-diffusion operators of the form~\eqref{eq:condiff_op} on $[0,1]^3$ with $\nu=0.005$ and convection vectors $w_{\mat{A}}=(x\sin(x),y\cos(y),e^{z^2-1})$ and $w_{\mat{B}}=((1-x^2)yz,1,e^z)$. The vectors $c_1,c_2\in\mathbb{R}^n$ have normally distributed random entries, and they are such that $c_1c_2^{\top}$ has unit norm.

The shifts employed in RKSM are computed by following the on-line procedure presented in~\cite{DRUSKIN2011546}. Moreover, the large shifted linear systems occurring in the basis construction are solved by means of preconditioned BICGstab($\ell$)~\cite{VanDerVorst1992}
with no fill-in ILU preconditioning\footnote{We employed the Matlab function {\tt bicgstabl}, and for the preconditioner, {\tt setup=struct('type','nofill'); [LA,UA]=ilu(A,setup).}}, and similarly for $\mat{B}$. BICGstab($\ell$) is stopped as soon as the relative residual norm reached $10^{-8}$.

The stopping threshold in {\color{black}\sPKSM} and RKSM is $\tol=10^{-6}$ while maxit$=250$.
Finally, in {\color{black}\sPKSM} the truncation parameter was $k=3$ while the sketching dimension was $s=500$.

\begin{table}[t]
    \centering
    \begin{tabular}{r|rrrr|rrrr}
    \hline
    &  \multicolumn{4}{c}{{\color{black}\sPKSM}} & \multicolumn{4}{|c}{RKSM} \\
         $n$  & $d$ & Time & rank  & Mem & $d$ ($d_{\textnormal{inn}}^{\mat{A}}$, $d_{\textnormal{inn}}^{\mat{B}}$) & Time  & rank & Mem  \\
         \hline
         $125\,000$ &   140 & 2.69 & 29 & {\it 58} & 28 (7.4, 6.9) & 9.86 & 27 & {\it 58+36}  \\
         $216\,000$ &  160 & 5.36 & 30 & {\it 60} & 28 (7.1, 6.9) & 17.57 & 27 & {\it 58+36}  \\
         $343\,000$ &   180 & 11.29 & 31 & {\it 62} & 29 (7.8, 7.3) & 30.83 & 27 & {\it 60+36}\\
         $512\,000$ &  180 & 21.42 & 32 &  {\it 64} & 29 (8.2, 7.7) & 48.92 & 27 & {\it 60+36}\\
         $729\,000$ & 200 & 35.42 & 32& {\it 64} & 29 (8.8, 8.1) & 76.60 & 27 &  {\it 60+36}\\
         $1\,000\,000$ & 220 & 52.56 & 33 & {\it 66} & 29 (9.6, 8.7) & 114.91 & 29 & {\it 60+36}\\
         \hline
    \end{tabular}
    \caption{\Cref{Example 3}. Performance of {\color{black}\sPKSM} and RKSM. The reported CPU timings are in seconds. The column ``rank'' indicates the rank of the solution matrix $\mat{X}_d$ and ``Mem'' indicates the number of vectors of length $n$ used. For RKSM, the memory requirements are reported as the sum of the memory required for basis allocation and for the preconditioner.}
    \label{tab1:ex3}
\end{table}

In \Cref{tab1:ex3} we report the number of iterations, the computational timings and the rank of the solution computed by {\color{black}\sPKSM} and RKSM by varying the problem dimension $n$.
\Cref{tab1:ex3} also reports the average number of (inner) BICGStab($\ell$) iterations, $d_{\text{inn}}^{\mat{A}}$ and $d_{\text{inn}}^{\mat{B}}$, to approximately solve the shifted linear systems with $\mat{A}$ and $\mat{B}^\top$, respectively, during the generation of the rational Krylov spaces.
Although RKSM requires a much smaller space to converge, {\color{black}\sPKSM} is always faster. Indeed, the solution of two shifted linear systems (one with $\mat{A}$ and one with $\mat{B}^\top$) at each RKSM iteration has a detrimental impact on the overall running time.
Even though these values remain rather moderate, the BICGStab($\ell$) iteration count increases with $n$, making the linear system solution step increasingly more expensive. Such shortcoming may be fixed by employing more robust preconditioning operators.
On the other hand, {\color{black}\sPKSM} requires only matrix-vector products and the cost of the orthogonalization step is moderate thanks to the truncated approach. Moreover, the possible delay coming from the use of locally orthogonal bases is mitigated by sketching.

{\color{black}\sPKSM} turns out to be surprisingly competitive when compared to RKSM also in terms of storage allocation. Indeed, as reported in the columns ``Mem'' of \Cref{tab1:ex3}, the memory requirements of {\color{black}\sPKSM} and RKSM are always rather similar. We mention that in case of RKSM we denote the storage demand as the sum of two terms to emphasize the different nature of the allocated components. While the first one is related to the basis storage, the second one comes from the allocation of the preconditioners.
 Indeed, for this example, storing the four incomplete LU factors is equivalent to storing about 36 vectors of length $n$.
\end{example}

\section{Conclusions}\label{sec:conclusions}
By fully exploiting the promising combination of sketching and Krylov methods, in this paper we developed a sketched-and-truncated Krylov subspace method for matrix equations. In addition to making polynomial Krylov methods feasible, the approach turns out to be competitive also with respect to state-of-the-art schemes based on rational Krylov subspaces. In particular, the proposed method reduces the storage demand of the overall solution process, and concurrently provides a 50\% CPU time speed-up over RKSM. Therefore, the sketched-and-truncated polynomial Krylov subspace method can be a valid candidate for solving large-scale matrix equations for which projection methods based on more sophisticated subspaces struggle due to, e.g., expensive linear systems solves.

The whitened-sketched recurrence produces matrices that can, in theory, endanger the applicability of the generated space as projection method; fortunately, the projected linear equations are usually solvable in practice, and very effective performance resulted in our numerical experiments. We have theoretically explained this perceivable contradiction. Indeed, we have proved that sketching takes place into what we have called the effective field of values of the involved matrices, for which the projected solution can be well-defined.

A convergence analysis for the sketched approximation has also been presented, which conveniently extends results obtained in the literature for the full Arnoldi method.

As a supplementary numerical contribution of this paper, we have derived and analyzed truncated Krylov methods, which appear to be new in the matrix equation context.

\section*{Acknowledgments}
The first and third authors are members of the INdAM Research
Group GNCS that partially supported this work through the funded project GNCS2023 ``Metodi avanzati per la risoluzione di PDEs su griglie strutturate, e non'' with reference number CUP\_E53C22001930001.

\appendix
\section{Algorithmic description of truncated Arnoldi}\label{appendix:algorithm}
\begin{algorithm}[t]
\caption{Truncated Arnoldi method for Sylvester equations {\color{black}(\tPKSM)}}\label{alg:standardinnerprod_krylov}
\begin{algorithmic}[1]
\setstretch{1.2}
\smallskip
\Statex \textbf{Input:} \ \ $\mat{A}\in\R^{n\times n}$, $\mat{B}\in \R^{n \times n}$,  $\blockvec{C}_1 \in \R^{n\times r}$, $\blockvec{C}_2 \in \R^{n\times r}$, integers $0<k\leq\text{maxit} \ll n$, $\tol>0$, $p\geq 1$ 
\Statex \textbf{Output:} $\mat{X}^{(1)},\mat{X}^{(2)}$ such that $\mat{X}^{(1)}(\mat{X}^{(2)})^{\top}=\mat{X}_d$ approximately solves $\mat{A} \mat{X}+\mat{X}\mat{B}=\blockvec{C}_1\blockvec{C}_2^{\top}$
\State Compute skinny QRs: $\blockvec{U}_1\blockscalar{\beta}_1=\blockvec{C}_1$, $\blockvec{V}_1\blockscalar{\beta}_2=\blockvec{C}_2$
\For{$d=1,\ldots,maxit$}
\State Compute $\,\,\widetilde{\blockvec{U}}=\mat{A} \blockvec{U}_d$, $\,\,\widetilde{\blockvec{V}}=\mat{B}^{\top}\blockvec{V}_d$
\For{$i=\max\{1,d-k+1\},\ldots,d$}
\State Set $\widetilde{\blockvec{U}}= \widetilde{\blockvec{U}}-\blockvec{U}_i\blockscalar{h}_{i,d}\,\,$
with $\,\,\blockscalar{h}_{i,d}=\blockvec{U}_i^{\top}\widetilde{\blockvec{U}}$
\State Set $\widetilde{\blockvec{V}}= \widetilde{\blockvec{V}}-\blockvec{V}_i\blockscalar{g}_{i,d}\,\,$
with $\,\,\blockscalar{g}_{i,d}=\blockvec{V}_i^{\top}\widetilde{\blockvec{V}}$
\EndFor
\State Compute skinny QRs:  $\blockvec{U}_{d+1}\blockscalar{h}_{d+1,d}=\widetilde{\blockvec{U}}$ and $\blockvec{V}_{d+1}\blockscalar{g}_{d+1,d}=\widetilde{\blockvec{V}}$
\If{$\mathtt{mod}(d,p)=0$}
\State Solve $\,\,\trunchessenberg{H}_d \mat{Y} + \mat{Y} \trunchessenberg{G}_d^{\top} = \blockvec{e}_1\blockscalar{\beta_1}\blockscalar{\beta_2}\blockvec{e}_1^{\top}$ for $\mat{Y}$
\State Compute $\rho=\sqrt{dr}(\|{\color{black}\mat{Y}}E_{d}\blockscalar{g}_{d+1,d}^\top\|_F+ \|\blockscalar{h}_{d+1,d}\blockvec{E}_d^{\top}{\color{black}\mat{Y}}\|_F)$~\label{alg:line_rho}
\If{$\rho<\tol$}
\State Go to line~\ref{line:endfor}
\EndIf
\EndIf
\EndFor
\State Compute (possibly low-rank) factors $\mat{Y}_1, \mat{Y}_2$ such that $\mat{Y}\approx\mat{Y}_1\mat{Y}_2^{\top}$\label{line:endfor}
\State Retrieve $\mat{X}^{(1)}= \truncbasis{U}_d\mat{Y}_1$, $\mat{X}^{(1)}=\truncbasis{V}_d\mat{Y}_2$ by the two-pass step
\end{algorithmic}
\end{algorithm}
In~\Cref{alg:standardinnerprod_krylov} we report the algorithm pseudocode for the truncated polynomial Krylov method for Sylvester equations. Notice that, in principle, one can use the sketched inner product also in the computation of the truncated bases $\truncbasis{U}_d$ and $\truncbasis{V}_d$, thus avoiding any inner product in $\mathbb{R}^n$. We refrain from reporting also this pseudocode here as it can be easily derived from \Cref{alg:standardinnerprod_krylov}.

\section{Tensorized subspace embeddings}\label{appendix:embedding}
In our context of solving matrix equations, sketchings of the form $\mat{S}_{\mat{U}}\mat{M}\mat{S}_{\mat{V}}^\top$ occur; see, e.g., formula~\eqref{eq:resnorm_sk} involving the norm of the sketched residual.

The embeddings $\mat{S}_{\mat{U}}$ and $\mat{S}_{\mat{V}}$ are constructed as usual subspace embeddings for the block Krylov spaces $\spK_{d+1}(\mat{A},\blockvec{C}_1), \spK_{d+1}(\mat{B}^\top,\blockvec{C}_2)$, so that~\eqref{eq:sketch} holds for $\mat{S}_{\mat{U}}$ and $v \in \spK_{d+1}(\mat{A},\blockvec{C}_1)$ as well as for $\mat{S}_{\mat{V}}$ and $v \in \spK_{d+1}(\mat{B}^\top,\blockvec{C}_2)$. We now investigate what this implies for 
$\|\mat{S}_{\mat{U}}\mat{M}\mat{S}_{\mat{V}}^\top\|_F$ in relation to $\|\mat{M}\|_F$, when $\mat{M}$ is of the form $\mat{M} = \fullbasis{U}_{d+1}\fullhessenberg{Z}\fullbasis{V}_{d+1}^\top$ (as it is the case for the Sylvester equation residual).

Our derivations closely follow the ideas in~\cite{chen2020structured}, generalizing their results from random Gaussian matrices to general $\varepsilon$-subspace embeddings. We start with the following simple observation.

\begin{proposition}\label{pro:SV_embedding}
{\it Let $\mat{S} \in \mathbb{R}^{s \times n}$ be an $\varepsilon$-subspace embedding of $\mathcal{V}$ with $\dim(\mathcal V) = d+1$ and let the columns of  $\fullbasis{V}$ form an orthonormal basis of $\mathcal{V}$. Then $\mat{S}\fullbasis{V}$ is an $\varepsilon$-subspace embedding for $\mathbb{R}^{d+1}$.
}
\end{proposition}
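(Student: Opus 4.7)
The plan is to unfold the definition of an $\varepsilon$-subspace embedding and transport it through the isometry $\fullbasis{V}\colon\mathbb{R}^{d+1}\to\mathcal{V}$. I want to show that for every $y\in\mathbb{R}^{d+1}$,
\[
(1-\varepsilon)\|y\|^2 \;\le\; \|\mat{S}\fullbasis{V} y\|^2 \;\le\; (1+\varepsilon)\|y\|^2,
\]
which is the condition~\eqref{eq:sketch} applied to $\mat{S}\fullbasis{V}$ with $\mathcal{V}$ replaced by the whole of $\mathbb{R}^{d+1}$.

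The execution is immediate. Given $y\in\mathbb{R}^{d+1}$, set $v:=\fullbasis{V} y$. Since $\fullbasis{V}$ has orthonormal columns, $\fullbasis{V}^\top\fullbasis{V}=\mat{I}_{d+1}$ and hence $\|v\|^2 = y^\top\fullbasis{V}^\top\fullbasis{V} y = \|y\|^2$. Moreover $v\in\mathrm{range}(\fullbasis{V})=\mathcal{V}$, so the hypothesis that $\mat{S}$ is an $\varepsilon$-subspace embedding for $\mathcal{V}$ gives $(1-\varepsilon)\|v\|^2 \le \|\mat{S} v\|^2 \le (1+\varepsilon)\|v\|^2$. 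Substituting $\|v\|=\|y\|$ and $\mat{S} v=\mat{S}\fullbasis{V} y$ yields exactly the desired two-sided bound, which is the embedding condition for $\mat{S}\fullbasis{V}$ on $\mathbb{R}^{d+1}$.

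There is really no obstacle here: the statement is essentially a reparameterization of $\mathcal{V}$ by its canonical coordinates in the basis $\fullbasis{V}$. The only point worth mentioning explicitly is that as $y$ ranges over $\mathbb{R}^{d+1}$, the vector $\fullbasis{V} y$ ranges over all of $\mathcal{V}$ (since $\fullbasis{V}$ is a linear isomorphism onto $\mathcal{V}$), so no element of $\mathbb{R}^{d+1}$ is missed by the argument. This lemma will then feed into \Cref{pro:kron_embedding} (the Kronecker/tensor step used in the bound~\eqref{eq:sketched_res_norm_bound} and in the field-of-values analysis of the preceding section), where the real combinatorial work happens.
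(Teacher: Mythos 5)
Your proof is correct and is essentially identical to the paper's own argument: both take an arbitrary coordinate vector, map it into $\mathcal{V}$ via $\fullbasis{V}$, apply the embedding property~\eqref{eq:sketch}, and use $\|\fullbasis{V}y\|=\|y\|$ from the orthonormality of the columns. No differences worth noting.
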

\begin{proof}
Let $z \in \mathbb{R}^{d+1}$. Then $\fullbasis{V}z \in \mathcal{V}$, so that by~\eqref{eq:sketch}, we have
\[
(1-\varepsilon) \| \fullbasis{V}z \|^2 \leq \| \mat{S}\fullbasis{V}z\|^2 \leq (1+\varepsilon) \|\fullbasis{V}z\|^2,
\]
The result follows by noting that $\|\fullbasis{V}z\| = \|z\|$ due to the orthonormality of the columns of $\fullbasis{V}$.
\end{proof}

We further need the following auxiliary result.
\begin{proposition}[Lemma~1 and Corollary~3 in~\cite{chen2020structured}]\label{pro:kron_embedding}
{\it
Let $\mat{S}_1, \mat{S}_2 \in \mathbb{R}^{s \times (d+1)}$ be $\varepsilon$-subspace embeddings of $\mathbb{R}^{d+1}$. Then $\mat{S}_i \otimes \mat{I}_n$ and $\mat{I}_n \otimes \mat{S}_i, i = 1,2$ are $\varepsilon$-subspace embeddings of $\mathbb{R}^{n(d+1)}$ and $\mat{S}_1 \otimes \mat{S}_2$ is an $\varepsilon(2+\varepsilon)$-subspace embedding of $\mathbb{R}^{(d+1)^2}$.
}
\end{proposition}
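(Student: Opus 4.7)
The plan is to prove the two assertions in sequence, first handling the ``one-sided'' Kronecker embeddings $\mat{S}_i\otimes \mat{I}_n$ and $\mat{I}_n\otimes\mat{S}_i$ directly from the definition, and then factoring the full tensor embedding $\mat{S}_1\otimes\mat{S}_2$ through these one-sided embeddings to control the distortion.

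For the first part, I would use the matricization identity $(\mat{A}\otimes\mat{B})\vecop(X)=\vecop(\mat{B}X\mat{A}^\top)$. Given $z\in\mathbb{R}^{n(d+1)}$, write $z=\vecop(Z)$ with $Z\in\mathbb{R}^{n\times(d+1)}$. Then
\[
\|(\mat{S}_i\otimes\mat{I}_n)z\|^2=\|Z\mat{S}_i^\top\|_F^2=\sum_{j=1}^n\|\mat{S}_i(Z^\top e_j)\|^2.
\]
Each vector $Z^\top e_j$ lies in $\mathbb{R}^{d+1}$, so the hypothesis on $\mat{S}_i$ gives column-wise bounds $(1-\varepsilon)\|Z^\top e_j\|^2\le\|\mat{S}_i Z^\top e_j\|^2\le(1+\varepsilon)\|Z^\top e_j\|^2$. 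Summing over $j$ and noting $\sum_j\|Z^\top e_j\|^2=\|Z\|_F^2=\|z\|^2$ shows $\mat{S}_i\otimes\mat{I}_n$ is an $\varepsilon$-subspace embedding of $\mathbb{R}^{n(d+1)}$. The argument for $\mat{I}_n\otimes\mat{S}_i$ is symmetric, using instead $\vecop(\mat{S}_iZ)$ and summing over the rows.

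For the second part, I would use the factorization
\[
\mat{S}_1\otimes\mat{S}_2=(\mat{S}_1\otimes\mat{I}_s)(\mat{I}_{d+1}\otimes\mat{S}_2)
\]
and compose the one-sided bounds just established. Pick any $z\in\mathbb{R}^{(d+1)^2}$ and set $w:=(\mat{I}_{d+1}\otimes\mat{S}_2)z\in\mathbb{R}^{(d+1)s}$. By the first part, $(1-\varepsilon)\|z\|^2\le\|w\|^2\le(1+\varepsilon)\|z\|^2$. Applying the first part again to the outer factor $\mat{S}_1\otimes\mat{I}_s$ at the vector $w$ yields $(1-\varepsilon)\|w\|^2\le\|(\mat{S}_1\otimes\mat{S}_2)z\|^2\le(1+\varepsilon)\|w\|^2$. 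Chaining these and using $(1+\varepsilon)^2=1+\varepsilon(2+\varepsilon)$ together with $(1-\varepsilon)^2=1-2\varepsilon+\varepsilon^2\ge 1-\varepsilon(2+\varepsilon)$ gives the required bound
\[
(1-\varepsilon(2+\varepsilon))\|z\|^2\le\|(\mat{S}_1\otimes\mat{S}_2)z\|^2\le(1+\varepsilon(2+\varepsilon))\|z\|^2.
\]

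There is no real obstacle: the argument is a short exercise in tensor algebra. The only point requiring care is that both one-sided factors must be embeddings of the entire intermediate ambient space, so that the composition applies to an arbitrary $z$ rather than one restricted to a proper subspace; this is exactly what the first part delivers (the one-sided embedding property holds on all of $\mathbb{R}^{n(d+1)}$, not just on a distinguished subspace), which is what makes the clean distortion multiplication go through.
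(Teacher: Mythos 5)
Your proof is correct: the columnwise reduction establishing that $\mat{S}_i\otimes\mat{I}_n$ and $\mat{I}_n\otimes\mat{S}_i$ inherit the distortion $\varepsilon$, followed by the factorization $\mat{S}_1\otimes\mat{S}_2=(\mat{S}_1\otimes\mat{I}_s)(\mat{I}_{d+1}\otimes\mat{S}_2)$ and the estimates $(1\pm\varepsilon)^2\lessgtr 1\pm\varepsilon(2+\varepsilon)$, is sound, and you correctly flag the one point that matters, namely that the one-sided embeddings hold on the \emph{whole} intermediate space so the composition applies to arbitrary vectors. Note that the paper does not prove this proposition at all --- it is imported verbatim from Lemma~1 and Corollary~3 of the cited reference --- so your argument serves as a valid self-contained proof, and it follows the standard route one would expect that reference to take.
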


With these results, we can now state a tensorized embedding property.
\begin{theorem}\label{thm:kron_embedding}
Let $\mat{S}_{\mat{U}} \in \mathbb{R}^{s \times n_1}$ be an $\varepsilon$-subspace embedding for the subspace $\mathcal{U} \subseteq \mathbb{R}^{n_1}$ and $\mat{S}_{\mat{V}}  \in \mathbb{R}^{s \times n_2}$ be an $\varepsilon$-subspace embedding for the subspace $\mathcal{V} \subseteq \mathbb{R}^{n_2}$, with $\dim(\mathcal{U})=\dim(\mathcal{V}) = d+1$. Further, let the columns of $\fullbasis{U}, \fullbasis{V}$ contain orthonormal bases of $\mathcal{U}, \mathcal{V}$, respectively.

Then, for any $\mat{M} = \fullbasis{U}\fullhessenberg{Z}\fullbasis{V}^\top$, we have
\begin{equation}\label{eq:sketchM}
    (1-\widetilde{\varepsilon}) \|\mat{M}\|_F \leq \|\mat{S}_{\mat{U}}\mat{M}\mat{S}_{\mat{V}}^\top\|_F \leq (1+\widetilde{\varepsilon}) \|\mat{M}\|_F
\end{equation}
with $\widetilde{\varepsilon} = \varepsilon(2+\varepsilon)$.
\end{theorem}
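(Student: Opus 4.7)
}
My plan is to reduce the two-sided Frobenius norm inequality to the vector-norm $\varepsilon$-embedding condition~\eqref{eq:sketch} applied to a single suitably constructed Kronecker product of matrices, which is already known to be an $\varepsilon(2+\varepsilon)$-subspace embedding by \Cref{pro:kron_embedding}. The vectorisation identity $\vecop(\mat{A}\mat{X}\mat{B}^{\top}) = (\mat{B}\otimes\mat{A})\vecop(\mat{X})$ is the natural bridge: applying it first to $\mat{S}_{\mat{U}}\mat{M}\mat{S}_{\mat{V}}^{\top}$ and then to $\mat{M} = \fullbasis{U}\fullhessenberg{Z}\fullbasis{V}^{\top}$ gives
\[
\vecop(\mat{S}_{\mat{U}}\mat{M}\mat{S}_{\mat{V}}^{\top})
= (\mat{S}_{\mat{V}}\otimes\mat{S}_{\mat{U}})\vecop(\mat{M})
= (\mat{S}_{\mat{V}}\otimes\mat{S}_{\mat{U}})(\fullbasis{V}\otimes\fullbasis{U})\vecop(\fullhessenberg{Z})
= \bigl((\mat{S}_{\mat{V}}\fullbasis{V})\otimes(\mat{S}_{\mat{U}}\fullbasis{U})\bigr)\vecop(\fullhessenberg{Z}),
\]
where the last step uses the mixed-product property of the Kronecker product. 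Taking Euclidean norms then rewrites $\|\mat{S}_{\mat{U}}\mat{M}\mat{S}_{\mat{V}}^{\top}\|_F$ as $\|\bigl((\mat{S}_{\mat{V}}\fullbasis{V})\otimes(\mat{S}_{\mat{U}}\fullbasis{U})\bigr)\vecop(\fullhessenberg{Z})\|$, while orthonormality of the columns of $\fullbasis{U}$ and $\fullbasis{V}$ (so that $\fullbasis{V}\otimes\fullbasis{U}$ has orthonormal columns) yields $\|\mat{M}\|_F = \|\fullhessenberg{Z}\|_F = \|\vecop(\fullhessenberg{Z})\|$.

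Next I would stack the two ingredients from the appendix. By \Cref{pro:SV_embedding}, the matrices $\mat{S}_{\mat{U}}\fullbasis{U}$ and $\mat{S}_{\mat{V}}\fullbasis{V}$ are $\varepsilon$-subspace embeddings of the whole ambient space $\mathbb{R}^{d+1}$. Feeding them into the Kronecker-product statement of \Cref{pro:kron_embedding} then shows that $(\mat{S}_{\mat{V}}\fullbasis{V})\otimes(\mat{S}_{\mat{U}}\fullbasis{U})$ is an $\widetilde{\varepsilon}$-subspace embedding of $\mathbb{R}^{(d+1)^2}$ with $\widetilde{\varepsilon} = \varepsilon(2+\varepsilon)$. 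Applying~\eqref{eq:sketch} to the vector $\vecop(\fullhessenberg{Z})\in\mathbb{R}^{(d+1)^2}$ and using the two norm identities above produces the squared-form estimate
\[
(1-\widetilde{\varepsilon})\,\|\mat{M}\|_F^{2}\;\le\;\|\mat{S}_{\mat{U}}\mat{M}\mat{S}_{\mat{V}}^{\top}\|_F^{2}\;\le\;(1+\widetilde{\varepsilon})\,\|\mat{M}\|_F^{2}.
\]

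To reach the stated (non-squared) form~\eqref{eq:sketchM}, I take square roots and invoke the elementary inequalities $\sqrt{1+t}\le 1+t$ and $\sqrt{1-t}\ge 1-t$ valid for $t\in[0,1]$, which are immediate by squaring. This yields $(1-\widetilde{\varepsilon})\|\mat{M}\|_F \le \|\mat{S}_{\mat{U}}\mat{M}\mat{S}_{\mat{V}}^{\top}\|_F \le (1+\widetilde{\varepsilon})\|\mat{M}\|_F$, as claimed.

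I do not anticipate a serious obstacle here: the only nontrivial ingredients are the two propositions in the appendix, which are already assumed. The only steps one has to be careful about are the correct ordering of factors in the vectorisation/Kronecker identity (making sure $\mat{S}_{\mat{V}}$ pairs with $\fullbasis{V}$ and $\mat{S}_{\mat{U}}$ with $\fullbasis{U}$) and verifying the implicit restriction $\widetilde{\varepsilon}\le 1$ required for the square-root inequality. The latter is harmless in practice since the standing assumption $\varepsilon\in[0,1)$ in~\eqref{eq:sketch} together with the usual choice $\varepsilon \le 1/\sqrt{2}$ (discussed after~\eqref{eq:sketch_innerproduct}) keeps $\widetilde{\varepsilon} = \varepsilon(2+\varepsilon) < 1$.
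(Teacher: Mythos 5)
Your proposal is correct and follows essentially the same route as the paper's proof: vectorise, use the mixed-product property to reduce to the Kronecker embedding $(\mat{S}_{\mat{V}}\fullbasis{V})\otimes(\mat{S}_{\mat{U}}\fullbasis{U})$ via \Cref{pro:SV_embedding} and \Cref{pro:kron_embedding}, and use orthonormality of $\fullbasis{V}\otimes\fullbasis{U}$ to identify $\|\vecop(\fullhessenberg{Z})\|$ with $\|\mat{M}\|_F$. If anything, you are slightly more careful than the paper, which passes from the squared embedding condition~\eqref{eq:sketch} to the non-squared bounds without spelling out the $\sqrt{1\pm\widetilde{\varepsilon}}$ step that you justify via $\sqrt{1-t}\ge 1-t$ and $\sqrt{1+t}\le 1+t$.
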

\begin{proof}
Using the vectorized form, for any $\mat{M}$, we have $\|\mat{M}\|_F = \|\vecop(\mat{M})\|$. By a direct calculation, we obtain 
\begin{align*}
\|\mat{S}_{\mat{U}}\mat{M}\mat{S}_{\mat{V}}^\top\|_F  =  \|(\mat{S}_{\mat{V}}\fullbasis{V} \otimes \mat{S}_{\mat{U}}\fullbasis{U}) \vecop(\fullhessenberg{Z})\|.
\end{align*}
By \Cref{pro:SV_embedding}, $\mat{S}_{\mat{U}}\fullbasis{U}$ and $\mat{S}_{\mat{V}}\fullbasis{V}$ are $\varepsilon$-subspace embeddings of $\mathbb{R}^{d+1}$, so that by \Cref{pro:kron_embedding}, $\mat{S}_{\mat{V}}\fullbasis{V} \otimes \mat{S}_{\mat{U}}\fullbasis{U}$, is an $\widetilde{\varepsilon}$-subspace embedding of $\mathbb{R}^{(d+1)^2}$. Thus,
\begin{equation}\label{eq:sketched_fro}
(1-\widetilde{\varepsilon})\|\vecop(\fullhessenberg{Z})\| \leq \|(\mat{S}_{\mat{V}}\fullbasis{V} \otimes \mat{S}_{\mat{U}}\fullbasis{U}) \vecop(\fullhessenberg{Z})\| \leq (1+\widetilde{\varepsilon})\|\vecop(\fullhessenberg{Z})\|.
\end{equation}
Furthermore, $\|\vecop(\fullhessenberg{Z})\| = \|\vecop(\mat{M})\|$, because $(\fullbasis{V} \otimes \fullbasis{U})\vecop(\fullhessenberg{Z}) = \vecop(\mat{M})$ and $\fullbasis{V} \otimes \fullbasis{U}$ has orthonormal columns. Inserting this into~\eqref{eq:sketched_fro} completes the proof.
\end{proof}

\begin{remark}
The lower bound in~\eqref{eq:sketchM} is only informative if $1-\widetilde{\varepsilon} > 0$, or equivalently, $\varepsilon < \sqrt{2} - 1 \approx 0.4142$. This means that in order to obtain theoretical guarantees, one needs to impose a stricter subspace embedding condition than when solving linear systems or approximating the action of matrix functions via sketched Krylov methods (where any $\varepsilon < 1$ works in principle, and $\varepsilon = 1/\sqrt{2}$ guarantees that, e.g., the sketched residual norm differs from the true residual norm by a factor of at most $6$). 

However, our numerical experiments indicate that in practice, a cruder tolerance typically also suffices for obtaining a method that works well.
\end{remark}

\bibliographystyle{amsplain}
\bibliography{refs}

\end{document}